\documentclass[reqno]{amsart} 
 
\usepackage{hyperref}
\usepackage{amsmath}
\usepackage{amssymb}
\usepackage{amsfonts}
\usepackage{amsthm}
\usepackage{latexsym}
\usepackage{esint}
\usepackage{mathtools}
\usepackage{mathrsfs}
\usepackage{graphicx}
\usepackage{wrapfig}
\usepackage{bbm}
\usepackage{color}
\usepackage{bm}
\usepackage[top=1in, bottom=1.25in, left=1.10in, right=1.10in]{geometry}
\usepackage{todonotes}
\usepackage{stackengine}
\allowdisplaybreaks

\newtheorem{theorem}{Theorem}[section]

\newtheorem{proposition}[theorem]{Proposition}

\theoremstyle{definition}
\newtheorem{definition}[theorem]{Definition}

\theoremstyle{remark}
\newtheorem{remark}[theorem]{Remark}

\numberwithin{equation}{section}

%%%%%%%% New commands %%%%%%%%

\newcommand{\bx}{\mathbf{x}}
\newcommand{\by}{{y}}

\newcommand{\bq}{\mathbf{q}}
\newcommand{\bu}{\mathbf{u}}
\newcommand{\bfu}{\mathbf{u}}

\newcommand{\bff}{\mathbf{f}}

\newcommand{\bv}{\mathbf{v}}
\newcommand{\bn}{\mathbf{n}}

\newcommand{\dy}{\, \mathrm{d}y}
\newcommand{\dd}{\,\mathrm{d}}
\newcommand{\dq}{\, \mathrm{d} \mathbf{q}}
\newcommand{\dx}{\, \mathrm{d} \mathbf{x}}

\newcommand{\dt}{\, \mathrm{d}t}
\newcommand{\ds}{\, \mathrm{d}s}

\newcommand{\Div}{\mathrm{div}_{\mathbf{x}}}
\newcommand{\divx}{\mathrm{div}_{\mathbf{x}}}
\newcommand{\divq}{\mathrm{div}_{\mathbf{q}}}

\newcommand{\delx}{\Delta_{\mathbf{x}}}

\newcommand{\nabx}{\nabla_{\mathbf{x}}}
\newcommand{\naby}{\partial_y}
\newcommand{\nabq}{\nabla_{\mathbf{q}}}

\newcommand{\Delx}{\Delta_{\mathbf{x}}}
\newcommand{\Dely}{\Delta_{\mathbf{y}}}

\newcommand{\bT}{\mathbb{T}}
\newcommand{\R}{\mathbb{R}}

\newcommand{\Oeta}{\Omega_{\eta}}
\newcommand{\Ozeta}{\Omega_{\zeta}}

%    Blank box placeholder for figures (to avoid requiring any
%    particular graphics capabilities for printing this document).

\begin{document}

\title[Polymeric fluid-structure interaction of {O}ldroyd-{B} type]{Weak and strong solutions for polymeric fluid-structure interaction of {O}ldroyd-{B} type}

%%    Information for first author
%\author{Dominic Breit}
%%    Address of record for the research reported here
%\address{Department of Mathematics, Heriot-Watt University, Edinburgh, EH14 4AS, United Kingdom}
%\email{d.breit@hw.ac.uk}
%
%\thanks{The authors would like to thank S. Schwarzacher and E. S\"uli for valuable suggestions.}
%
%    Information for second author
\author{Prince Romeo Mensah}
\address{Institut f\"ur Mathematik,
Technische Universit\"at Clausthal,
Erzstra{\ss}e 1,
38678 Clausthal-Zellerfeld}
\email{prince.romeo.mensah@tu-clausthal.de \\ orcid ID: 0000-0003-4086-2708}
\thanks{The author would like to thank Dominic Breit for many useful discussions.}

%    General info
\subjclass[2020]{76D03; 74F10; 35Q30; 35Q84; 82D60}

\date{\today}

%\dedicatory{This paper is dedicated to our advisors.}

\keywords{Incompressible Navier--Stokes--Fokker--Planck system, Oldroyd-B, Fluid-Structure interaction, Polymeric fluid}

\begin{abstract}
We prove the existence of weak solutions and a unique strong solution to the Oldroyd-B dumbbell model describing the evolution of a two-dimensional dilute polymer fluid interacting with a one-dimensional viscoelastic shell. The polymer fluid consists of a mixture of an incompressible viscous solvent and a solute comprising two massless beads connected by a Hookean spring with center-of-mass diffusion. This solute-solvent mixture then interacts with a flexible structure that evolves in time. An arbitrary nondegenerate reference domain for the polymer fluid is allowed and both solutions exist globally in time provided no future degeneracies occur with the structure deformation. Futhermore, weak-strong uniqueness holds unconditionally.
\end{abstract}

\maketitle

%------
% INSERT THE BODY OF THE PAPER HERE (except
% acknowledgments, funding info and bibliography)
%------

\section{Introduction}
A polymer fluid is a complex fluid with a high molecular weight consisting of a mixture of a solvent and a solute. In this work, the solvent is a  viscous fluid modeled by the incompressible Navier--Stokes equation and the solute is described by the macroscopic average of the probability distribution function of two monomers connected by a Hookean spring and modeled by the  Oldroyd-B system (also referred to as the \textit{convected Jeffreys model} by some authors \cite{bird1987dynamics}). We refer to \cite{barrett2017existenceOldroyd} for the modeling of the generalized state-of-art where the fluid's density is also allowed to vary.

We initiate work on the rigorous analysis of such a polymer fluid within a flexible structure and the interaction between the evolution of the structure and the polymer fluid. The polymer fluid is two-dimensional and contained in a domain whose boundary is a flexible structure in 1-D modeled by a viscoelastic shell equation. We deviate from the usual practice in the literature where one considers a simple flat reference domain for fluids and allows for a generalized reference domain for the solute-solvent mixture.

\subsection{Geometric setup and equations of motion}
\label{sec:geo}
We consider a fluid domain whose reference configuration is  $\Omega \subset \mathbb{R}^2$. The boundary of this reference domain $\partial\Omega$ may consist of a flexible part $\omega \subset \mathbb{R}$ and a rigid part $\Gamma \subset \mathbb{R}$. However, because the analysis at the rigid part is significantly simpler, we shall identify the whole of $\partial \Omega$ with $\omega$. Let $I:=(0,T)$ represent a time interval for a given constant $T>0$. 
We represent the time-dependent  displacement of the structure by $\eta:\overline{I}\times\omega\rightarrow(-L,L)$ where $L>0$ is a fixed length of the tubular neighbourhood of $\partial\Omega$ given by
\begin{align*}
S_L:=\{\bx\in \mathbb{R}^2\,:\, \mathrm{dist}(\bx,\partial\Omega
)<L \}.
\end{align*}
Now, for some $k\in\mathbb{N}$ large enough, we assume that $\partial\Omega$  is parametrized by an injective mapping $\bm{\varphi}\in C^k(\omega;\mathbb{R}^2)$ with $\naby \bm{\varphi}\neq0$ such that
\begin{align*}
\partial{\Omega_{\eta(t)}}=\big\{\bm{\varphi}_{\eta(t)}:=\bm{\varphi}(\by)+\bn(\by)\eta(t,\by)\, :\, t\in I, \by\in \omega\big\}.
\end{align*}
The set $\partial{\Omega_{\eta(t)}}$ represents the boundary of the flexible domain at any instant of time $t\in I$ and the vector $\bn(y)$ is a unit normal at the point $y\in \omega$. 
We also let $\bn_{\eta(t)}(y)$ be the corresponding normal of $\partial{\Omega_{\eta(t)}}$ at the spacetime point $y\in \omega$ and $t\in I$. Then for $L>0$ sufficiently small, $\bn_{\eta(t)}(y)$ is close to $\bn(y)$ and $\bm{\varphi}_{\eta(t)}$ is close to $\bm{\varphi}$. Since $\naby \bm{\varphi}\neq0$,  it will follow that
\begin{align*}
\naby \bm{\varphi}_{\eta(t)} \neq0 \quad\text{ and }\quad \bn(y)\cdot \bn_{\eta(t)}(y)\neq 0 
\end{align*}
for $y\in \omega$ and $t\in I$. Thus, in particular, there is no loss of strict positivity of the Jacobian determinant provided that $\Vert \eta\Vert_{L^\infty(I\times\omega)}<L$.

For the interior points, we  transform the  reference domain $\Omega$ into a time-dependent moving domain $\Omega_{\eta(t)}$  whose state at time $t\in\overline{I}$ is given by
\begin{align*}
\Omega_{\eta(t)}
 =\big\{
 \bm{\Psi}_{\eta(t)}(\bx):\, \bx \in \Omega 
  \big\}.
\end{align*}
Here,
\begin{align*}
\bm{\Psi}_{\eta(t)}(\bx)=
\begin{cases}
\bx+\bn(\by(\bx))\eta(t,\by(\bx))\phi(s(\bx))     & \quad \text{if } \mathrm{dist}(\bx,\partial\Omega)<L,\\
    \bx & \quad \text{elsewhere. } 
  \end{cases}
\end{align*}
is the Hanzawa transform with inverse $\bm{\Psi}_{-\eta(t)}$ and where for a point $\bx$ in the neighbourhood of $\partial\Omega$, the vector $\bn(y(\bx))$ is the unit normal at the point $y(\bx)=\mathrm{arg min}_{y\in\omega}\vert\bx -\bm{\varphi}(y)\vert$. Also, $s(\bx)=(\bx-\bm{\varphi}(y(\bx)))\cdot\bn(y(\bx))$ and $\phi\in C^\infty(\mathbb{R})$ is a cut-off function that is $\phi\equiv0$ in the neighbourhood of $-L$ and $\phi\equiv1$ in the neighbourhood of $0$. Note that $\bm{\Psi}_{\eta(t)}(\bx)$ can be rewritten as
\begin{align*}
\bm{\Psi}_{\eta(t)}(\bx)=
\begin{cases}
\bm{\varphi}(y(\bx))+\bn(\by(\bx))[s(\bx)+\eta(t,\by(\bx))\phi(s(\bx)) ]    & \quad \text{if } \mathrm{dist}(\bx,\partial\Omega)<L,\\
    \bx & \quad \text{elsewhere. } 
  \end{cases}
\end{align*}
\\
With the above preparation in hand, we consider the Oldroyd-B model for the flow of a dilute polymeric fluid interacting with a flexible structure in  the closure of the deformed spacetime cylinder
\begin{align*}
I\times\Oeta:=\bigcup_{t\in I}\{t\}\times\Oeta
\end{align*}
with $\Oeta:={\Omega_{\eta(t)}}$.
Our goal is to find a structure displacement function $\eta:(t, \by)\in I \times \omega \mapsto   \eta(t,\by)\in \mathbb{R}$, a fluid velocity field $\mathbf{u}:(t, \mathbf{x})\in I \times \Oeta \mapsto  \mathbf{u}(t, \mathbf{x}) \in \mathbb{R}^2$, a pressure function $p:(t, \mathbf{x})\in I \times \Oeta \mapsto  p(t, \mathbf{x}) \in \mathbb{R}$, a polymer number density $\rho :(t, \mathbf{x} )\in I \times \Oeta  \mapsto \rho(t, \mathbf{x} ) \in \mathbb{R}$
and an extra stress tensor $\bT :(t, \mathbf{x} )\in I \times \Oeta  \mapsto \bT (t, \mathbf{x} ) \in \mathbb{R}^{2\times2}$
 such that the system of equations 
\begin{align}
\divx \bu=0, \label{divfree}
\\
\partial_t \rho+ (\mathbf{u}\cdot \nabx) \rho
= 
\varepsilon\Delx \rho 
,\label{rhoEqu}
\\
\partial_t \bu  + (\mathbf{u}\cdot \nabx)\mathbf{u} 
= 
\nu \delx \bu -\nabx p
+\bff
+K
\divx   \bT, \label{momEq}
\\
\varrho_s \partial_t^2 \eta - \gamma\partial_t\partial_y^2 \eta + \alpha \partial_y^4 \eta = g - ( \mathbb{S}\bn_\eta )\circ \bm{\varphi}_\eta\cdot\bn \,\det(\partial_y\bm{\varphi}_\eta), 
\label{shellEQ}
\\
\partial_t \bT + (\mathbf{u}\cdot \nabx) \bT
=
(\nabx \bu)\bT + \bT(\nabx \bu)^\top - 2k(\bT - \rho \mathbb{I})+\varepsilon\Delx \bT \label{solute}
\end{align}
holds on $I\times\Oeta\subset \mathbb R^{1+2}$ where
\begin{align*}
\mathbb{S}= \nu(\nabx \bu +(\nabx \bu)^\top) -p\mathbb{I}+ K\bT,
\end{align*}
the parameters $\varepsilon,K,\gamma,k,\nu,\varrho_s,\alpha$ are all positive constants, $\bn_\eta$ is the normal at $\partial\Oeta$ and $\mathbb{I}$ is the identity matrix.
We complement \eqref{divfree}--\eqref{solute} with the following initial and boundary conditions
\begin{align}
&\eta(0,\cdot)=\eta_0(\cdot), \qquad\partial_t\eta(0,\cdot)=\eta_\star(\cdot) & \text{in }\omega,
\\
&\bu(0,\cdot)=\bu_0(\cdot) & \text{in }\Omega_{\eta_0},
\\
&\rho(0,\cdot)=\rho_0(\cdot),\quad\bT(0,\cdot)=\bT_0(\cdot) &\text{in }\Omega_{\eta_0},
\label{initialCondSolv}
\\
& 
\bn_{\eta}\cdot\nabx\rho=0,\qquad
\bn_{\eta}\cdot\nabx\bT=0 &\text{on }I\times\partial\Omega_{\eta}.
\label{bddCondSolv}
\end{align}
Furthermore, for simplicity, we impose periodicity on the boundary of $\omega$ and the following interface condition
\begin{align} 
\label{interface}
&\bu\circ\bm{\varphi}_\eta=(\partial_t\eta)\bn & \text{on }I\times \omega
\end{align}
between the polymeric fluid and the flexible part of the boundary with normal $\bn$.
\\
The two unknowns $\rho$ and $\bT$ for the solute component of the polymer fluid are related via the identities
\begin{align*} 
\bT(t, \bx)= k\int_{B} f(t,\bx,\bq)\bq\otimes\bq \dq,
\qquad
\rho(t, \bx)= \int_{B} f(t,\bx,\bq) \dq
\end{align*}
where for  $B=\mathbb{R}^2$ with elements $\bq\in B$, the function $f$ is the probability density function ($f\geq0$ a.e. on $\overline{I}\times\Oeta\times B$) satisfying the
Fokker--Planck equation
\begin{align}
\label{fokkerPlanck}
\partial_t f+\divx(\bu f)+\divq((\nabx\bu)\bq f)=\varepsilon\Delx f+k\divq(M\nabq(f/M))
\end{align}
in $I\times\Oeta\times B$ for a Hookean dumbbell spring potential and Maxwellian
\begin{align*} 
U\Big(\frac{1}{2}|\bq|^2 \Big)=\frac{1}{2}|\bq|^2, \qquad\qquad M=\frac{\exp(-U(\tfrac{1}{2}|\bq|^2))}{\int_B\exp(-U(\tfrac{1}{2}|\bq|^2))\dq},
\end{align*}
respectively. Whether the center-of-mass diffusion parameter $\varepsilon$ in \eqref{fokkerPlanck} is zero or positive has been a source of many debates over the years. In this work, we follow the school of thought such as \cite{barrett2007existence, degond2009kinetic, schieber2006generalized}  that gives justifications
for the inclusion of this term. As shown in \cite{barrett2017existenceOldroyd}, this choice leads to the eligant parabolized macroscopic closure equation for the polymer number density $\rho$ and the elastic extra stress tensor $\bT$ satisfying \eqref{rhoEqu} and \eqref{solute}, respectively.

The existence of smooth solutions for the coupling of the Fokker-Planck equation \eqref{fokkerPlanck} with a generalized viscous compressible fluid in a fixed domain has been shown in \cite{breit2021local} whereas weak solutions have earlier been constructed in  \cite{barrett2016existenceA, barrett2016existence, feireisl2016dissipative}. More work has been done in the incompressible case in fixed domains. Weak solutions are constructed in \cite{barrett2005existence, barrett2007existence, barrett2008existence, barrettSuli2011existence, barrett2012existenceMMMAS, barrett2012existenceJDE} and in \cite{masmoudi2013global}  when $\varepsilon=0$ in \eqref{fokkerPlanck} whereas results on the existence of strong solutions are shown in \cite{constantin2005nonlinear, constantin2007regularity, jourdain2004existence, kreml2010local, li2004local,  luo2017global, masmoudi2008well, renardy1991existence, zhang2006local}

There are several works on the rigorous analysis of Oldroyd-B-type models in a fixed spatial domain or for domains without boundaries.
Strong solutions have been constructed in 
\cite{guillope1990existence} where the authors also show the existence of stable
time-periodic solutions when the forcing term is small and time-periodic.
For small data, the existence of global-in-time classical solutions has been shown in \cite{lei2005global} using an incompressible limit argument. Small data global solutions living in H\"older spaces and Besov spaces are also shown in \cite{constantin2012remarks} and \cite{zi2014global, fang2016global, zhai2021global}, respectively. See also \cite{zhu2018global} for the three-dimensional case.
The authors in \cite{constantin2012remarks} also cover large data for a model in which the potential responds to high rates of strain in the fluid. When stress diffusion is taken into account, the authors in \cite{constantin2012note} study the global existence and regularity of strong solutions. See also \cite{elgindi2015global, elgindi2015global1, wu2022global} which covers the case where the solvent is inviscid and \cite{chen2023global} where there is no damping or dissipation in the equation for the extra stress tensor. Results on weak solutions are unusually fewer and we refer to \cite{barrett2018existence, hu2016global, lions2000global} where global weak solutions are constructed and to \cite{bathory2021large} which combines the Oldroyd-B and the Giesekus models.  
Related results also include the Peterlin model \cite{luka2017global} and the compressible Oldroyd-B model without stress diffusion \cite{lu2020global}.

There are only a couple of results on polymer fluids evolving and interacting with a flexible structure. The existence of weak solutions to a finitely extensible dilute polymer \eqref{fokkerPlanck} of Warner-type immersed in a viscous incompressible fluid \eqref{divfree}, \eqref{momEq} and with this $3$-D mixture interacting with a $2$-D elastic structure \eqref{shellEQ} ($\gamma=0$) has been shown in \cite{breit2021incompressible}. These weak solutions are global provided the shell does not self-intersect and there are no degeneracies in the system. This was recently followed up with the local-in-time existence of a unique strong solution \cite{breit2023existence} for a viscoelastic structure with  ($\gamma>0$). The mathematical analysis of an Oldroyd-B model interacting with a flexible structure is completely open. Related works, however, include \cite{gotze2013strong, sabbagh2022motion} where the authors study strong solutions of the immersion of solid objects in viscoelastic fluids.

\subsection{Work plan}
The paper is structured in the following way: In Section \ref{sec:prelim}, we introduce some notations, give the precise definition of the types of solutions we are investigating, and state our main results. To make the construction of solutions as concise as possible, we present in Section \ref{sec:Apriori}, the a priori estimates leading to global bounds. These bounds will be referred to several times in later sections. We then devote the entirety of Section \ref{sec:weakSol} to the proof of the existence of weak solutions. This will come in three main steps. Firstly, we construct a finite-dimensional approximation of the linearised solvent-structure subproblem and of the solute subproblem independently of each other.
Here, the linearised subproblem is advected by a given regularized vector field and evolves on a given regularised geometry. A fixed point argument is then used to obtain a finite-dimensional approximation of the fully coupled linearized system. After this, we pass to the limit in the discrete parameter to obtain a unique solution to the linear and regularised system. Our second step will be to remove the given advected vector field and the given spatial geometry to obtain a weak solution to the fully nonlinear regularised system posed on a regularised spatial geometry that is evolving according to the structure equation. Here, we use a Schauder-type fixed point argument whose key ingredient is an Aubin-Lion-type compactness result  \cite[Theorem 5.1]{muha2019existence} for establishing the compactness of the fluid's velocity fields. We lose uniqueness at this point due to the nonlinearity in the system and the fact that the regularity of weak solutions is low. The final step then involves the passage to the limit in the regularisation parameter to obtain a weak solution to the original system.

We devote Section \ref{sec:strongSol} to the construction of a unique strong solution via a fixed point argument.
Strong solutions are only an instant of differentiability stronger than weak solutions. This low regularity of the strong solutions coupled with the boundary-valued effect of the moving domain (e.g. boundary terms from applying the Reynolds transport theorem) makes it very difficult to obtain useful estimates. Indeed, the most challenging part of Section \ref{sec:strongSol} is the proof of contraction which requires obtaining estimates for the difference of two solutions.  
%\item[(1)] the nonlinear nature of the system means a contraction argument cannot be done in the space of existence of the strong solution since this requires that at least one strong solution has extra regularity; 
Firstly, each solution is defined on a unique spatial domain evolving in time and as such, it is not clear what the spatial domain for the difference between the two solutions is. Secondly, the strong solutions are not very regular and are actually only an instant on integrability stronger than weak solutions. Consequently, the standard H\"older doubles or triples for estimating the product of functions leads one to require more regularity for some of the terms in the product than they actually possess.

There are at least two ways to remedy the first problem. We can either transform each of the solutions to the fixed reference domain and study the difference equation on this fixed domain or we transform one solution to the domain of the other and study the difference equation on this latter domain. The second approach is more difficult than the first. However, we opt for the second approach since it allows us to take full advantage of the regularity properties of the latter geometry on which we study the difference equation. This approach has been used in \cite{guidoboni2012continuous} to show the continuous dependence on initial data in a fluid-structure problem and in \cite{schwarzacher2022weak} to obtain a weak-strong uniqueness result for a fluid interacting with elastic plates and in \cite{BMSS} for elastic shells.

The key to solving the second problem is to find equivalent (in the sense that two spaces are continuously embedded in each other) square-integrable Sobolev spaces of fractional differentiability for non-square-integrable Sobolev spaces in order to obtain sharp interpolation between the weakest and strongest spaces of the functions being estimated.

 Finally, we show in Section \ref{sec:weak-strong} that weak-strong uniqueness hold unconditionally, i.e. weak solutions are always unique in the class of strong solutions.

\section{Preliminaries and main results}
\label{sec:prelim}
\noindent  
Henceforth, without loss of generality, we set all the parameters $\{ \varepsilon,K,\gamma,k,\nu,\varrho_s,\alpha \}$ in \eqref{divfree}-\eqref{bddCondSolv} to 1.
For two non-negative quantities $F$ and $G$, we write $F \lesssim G$  if there is a constant $c>0$  such that $F \leq c\,G$. If $F \lesssim G$ and $G\lesssim F$ both hold, we use the notation $F\sim G$. The symbol $\vert \cdot \vert$ may be used in four different contexts. For a scalar function $f\in \mathbb{R}$, $\vert f\vert$ denotes the absolute value of $f$. For a vector $\bff\in \mathbb{R}^d$, $\vert \bff \vert$ denotes the Euclidean norm of $\bff$. For a square matrix $\mathbb{F}\in \mathbb{R}^{d\times d}$, $\vert \mathbb{F} \vert$ shall denote the Frobenius norm $\sqrt{\mathrm{trace}(\mathbb{F}^T\mathbb{F})}$. Finally, if $S\subseteq  \mathbb{R}^d$ is  a (sub)set, then $\vert S \vert$ is the $d$-dimensional Lebesgue measure of $S$.

For $I=(0,T)$, $T>0$, and $\eta\in C(\overline{I}\times\omega)$ with $\|\eta\|_{L^\infty(I\times\omega)}< L$, we define for $1\leq p,r\leq\infty$,
\begin{align*} 
L^p(I;L^r(\Omega_\eta))&:=\Big\{v\in L^1(I\times\Omega_\eta):\substack{v(t,\cdot)\in L^r(\Omega_{\eta(t)})\,\,\text{for a.e. }t,\\\|v(t,\cdot)\|_{L^r(\Omega_{\eta(t)})}\in L^p(I)}\Big\},\\
L^p(I;W^{1,r}(\Omega_\eta))&:=\big\{v\in L^p(I;L^r(\Omega_\eta)):\,\,\nabx v\in L^p(I;L^r(\Omega_\eta))\big\}.
\end{align*} 
Higher-order Sobolev spaces can be defined accordingly. For $k>0$ with $k\notin\mathbb N$, we define the fractional Sobolev space $L^p(I;W^{k,r}(\Oeta))$ as the class of $L^p(I;L^r(\Omega_\eta))$-functions $v$ for which 
\begin{align*}
\|v\|_{L^p(I;W^{k,r}(\Oeta))}^p
&=\int_I\bigg(\int_{\Oeta} \vert v\vert^r\dx
+\int_{\Oeta}\int_{\Oeta}\frac{|v(\bx)-v(\bx')|^r}{|\bx-\bx'|^{d+k r}}\dx\dx'\bigg)^{\frac{p}{r}}\dt
\end{align*}
is finite. Accordingly, we can also introduce fractional differentiability in time for the spaces on moving domains.
Next, for  $\eta\in C(\overline{I}\times\omega)$ satisfying
\begin{align*}
\|\eta\|_{L^\infty(I \times \omega)}\leq L\quad\text{ and  }\quad\|\naby\eta\|_{L^\infty(I \times \omega)}\leq c(L),
\end{align*}
where $L>0$ is a constant, we let
\begin{align*}
\mathrm{Bog}_\eta : C_0^\infty(\Oeta) \rightarrow C_0^\infty(\Oeta; \mathbb{R}^d) \quad\text{with}\quad \divx \mathrm{Bog}_\eta(f)= f-b\int_{\Oeta}f\dx,
\end{align*}
with $b\in C_0^\infty(\Oeta\setminus S_L)$, be the time-dependent Bogovskij operator (see \cite[Theorem 2.11 \& Remark 2.12]{BMSS}). The operator $\mathrm{Bog}_\eta$ vanishes on the boundary $\partial\Oeta$, it commutes with time derivatives, and it continuously maps scalar elements in $W^{s,p}(\Ozeta)$ into vectors in  $W^{s+1,p}(\Ozeta;\mathbb{R}^d)$ for any $p\in(1,\infty)$ and $s\geq0$. Furthermore,  the Hanzawa transform $\bm{\Psi}_\eta$ together with its inverse 
 $\bm{\Psi}_\eta^{-1} : \Oeta \rightarrow\Omega$  possesses the following properties, see \cite{breit2022regularity, BMSS} for details. If for some $\ell,R>0$, we assume that
\begin{align*}
\Vert\eta\Vert_{L^\infty(\omega)}
+
\Vert\zeta\Vert_{L^\infty(\omega)}
< \ell <L \qquad\text{and}\qquad
\Vert\naby\eta\Vert_{L^\infty(\omega)}
+
\Vert\naby\zeta\Vert_{L^\infty(\omega)}
<R
\end{align*}
holds, then for any  $s>0$, $\varrho,p\in[1,\infty]$ and for any $\eta,\zeta \in B^{s}_{\varrho,p}(\omega)\cap W^{1,\infty}(\omega)$ (where $B^{s}_{\varrho,p}$ is a Besov space), we have that the estimates
\begin{align}
\label{210and212}
&\Vert \bm{\Psi}_\eta \Vert_{B^s_{\varrho,p}(\Omega\cup S_\ell)}
+
\Vert \bm{\Psi}_\eta^{-1} \Vert_{B^s_{\varrho,p}(\Omega\cup S_\ell)}
 \lesssim
1+ \Vert \eta \Vert_{B^s_{\varrho,p}(\omega)},
\\
\label{211and213}
&\Vert \bm{\Psi}_\eta - \bm{\Psi}_\zeta  \Vert_{B^s_{\varrho,p}(\Omega\cup S_\ell)} 
+
\Vert \bm{\Psi}_\eta^{-1} - \bm{\Psi}_\zeta^{-1}  \Vert_{B^s_{\varrho,p}(\Omega\cup S_\ell)} 
\lesssim
 \Vert \eta - \zeta \Vert_{B^s_{\varrho,p}(\omega)}
\end{align}
and
\begin{align}
\label{218}
&\Vert \partial_t\bm{\Psi}_\eta \Vert_{B^s_{\varrho,p}(\Omega\cup S_\ell)}
\lesssim
 \Vert \partial_t\eta \Vert_{B^{s}_{ \varrho,p}(\omega)},
\qquad
\eta
\in W^{1,1}(I;B^{s}_{\varrho,p}(\omega))
\end{align}
holds uniformly in time with the hidden constants depending only on the reference geometry, on $L-\ell$ and $R$.

\subsection{The concept of solutions and the main results}
Let us start with a precise definition of what we mean by a weak solution.   We recall that $L>0$ is a sufficiently small constant as introduced in Section \ref{sec:geo}.
\begin{definition}[Weak solution]
\label{def:weaksolmartFP}
Let $(\bff, g, \rho_0, \bT_0, \bu_0, \eta_0, \eta_\star)$
be a dataset that satisfies
\begin{equation}
\begin{aligned}
\label{mainDataForAll}
&\bff \in L^2(I;L^{2}_\mathrm{loc}(\mathbb{R}^2)),
\qquad g\in L^2(I;L^{2}(\omega)) ,
\\&
\eta_0 \in W^{2,2}(\omega) \text{ with } \Vert \eta_0 \Vert_{L^\infty( \omega)} < L, \quad \eta_\star \in L^{2}(\omega),
\\&\bu_0 \in L^{2}_{\divx}(\Omega_{\eta_0} )\text{ is such that }\bu_0 \circ \bm{\varphi}_{\eta_0} =\eta_\star \bn \text{ on } \omega,
\\&
\rho_0\in L^{2}(\Omega_{\eta_0}), \quad
\bT_0\in L^{2}(\Omega_{\eta_0}),
\\&
\rho_0\geq 0,\,\, \bT_0>0 \quad \text{a.e. in } \Omega_{\eta_0}.
\end{aligned}
\end{equation}
We call
$(\eta, \bu, \rho,\bT)$  
a \textit{weak solution} of   \eqref{divfree}--\eqref{interface} with data $(\bff, g, \rho_0, \bT_0, \bu_0, \eta_0, \eta_\star)$  if: 
\begin{itemize}
\item[(a)] the following properties 
\begin{align*}
&\eta\in  W^{1,\infty}\big(I;L^{2}(\omega)  \big)  \cap W^{1,2}\big(I;W^{1,2}(\omega)  \big) \cap L^{\infty}\big(I;W^{2,2}(\omega)  \big),
\\&
\Vert\eta\Vert_{L^\infty(I\times\omega)}<L,
\\
&\bu\in
L^{\infty} \big(I; L^{2}(\Oeta) \big)\cap L^2\big(I;W^{1,2}_{\divx}(\Oeta)  \big),
\\
&
\rho \in   L^{\infty}\big(I;L^{2}(\Oeta)  \big)
\cap 
L^2\big(I;W^{1,2}(\Oeta)  \big),
\\
&
\bT \in   L^{\infty}\big(I;L^{2}(\Oeta)  \big)
\cap 
L^2\big(I;W^{1,2}(\Oeta)  \big)
%,
%\\&
%\rho\geq 0,\,\, \bT>0 \quad \text{a.e. in } I\times\Oeta
\end{align*}
holds;
\item[(b)] for all  $  \psi  \in C^\infty (\overline{I}\times \R^2  )$, we have
\begin{equation}
\begin{aligned} 
\label{weakRhoEq}
\int_I  \frac{\mathrm{d}}{\dt}
\int_{\Oeta } \rho\psi \dx \dt 
&=
\int_I
\int_{\Oeta}[\rho\partial_t\psi + (\rho\bu\cdot\nabx) \psi] \dx\dt
\\&-
\int_I\int_{\Oeta}\nabx \rho \cdot\nabx \psi  \dx\dt;
\end{aligned}
\end{equation}
\item[(c)] for all  $  \mathbb{Y}  \in C^\infty (\overline{I}\times \R^2  )$, we have
\begin{equation}
\begin{aligned} 
\label{weakFokkerPlanckEq}
\int_I  \frac{\mathrm{d}}{\dt}
\int_{\Oeta } \bT:\mathbb{Y} \dx \dt 
&=
\int_I
\int_{\Omega_{\eta }}[\bT :\partial_t\mathbb{Y} + \bT:(\bu\cdot\nabx) \mathbb{Y}] \dx\dt
\\&+
\int_I\int_{\Oeta}
[(\nabx \bu )\bT  + \bT (\nabx \bu )^\top]:\mathbb{Y} \dx\dt
\\&
-2\int_I\int_{\Oeta }(\bT :\mathbb{Y}  - \rho  \mathrm{tr}(\mathbb{Y} ))\dx\dt
\\&-
\int_I\int_{\Oeta}\nabx \bT ::\nabx \mathbb{Y}  \dx\dt
\end{aligned}
\end{equation}
where $\nabx\bT::\nabx\mathbb{Y}=\sum_{i=1}^2\partial_{x_i}\bT:\partial_{x_i}\mathbb{Y}$;
\item[(d)] for all  $(\bm{\phi},\phi)  \in C^\infty_{\divx} (\overline{I}\times \R^2 )\otimes  C^\infty (\overline{I}\times \omega )$ with $\bm{\phi}(T,\cdot)=0$, $\phi(T,\cdot)=0$ and $\bm{\phi}\circ \bm{\varphi}_\eta= \phi \bn$, we have
\begin{equation}
\begin{aligned}
\label{weakFluidStrut}
\int_I  \frac{\mathrm{d}}{\dt}\bigg(
\int_{\Oeta } \bu\cdot\bm{\phi} &\dx 
+
\int_\omega\partial_t\eta\phi\dy\bigg) \dt 
\\&=
\int_I
\int_{\Oeta }[\bu \cdot\partial_t\bm{\phi} + \bu\cdot(\bu\cdot\nabx) \bm{\phi}] \dx\dt
\\&-
\int_I\int_{\Oeta }\big[\nabx \bu  :\nabx \bm{\phi}-\bff\cdot\bm{\phi} +\bT:\nabx\bm{\phi}\big] \dx\dt
\\&
+
\int_I\int_\omega\big[\partial_t\eta\partial_t\phi-\partial_t\naby\eta\naby\phi
-
\naby^2\eta\naby^2\phi+g\phi\big]\dy\dt;
\end{aligned}
\end{equation}
\item[(e)] the energy inequality
\begin{equation}
\begin{aligned}
\sup_{t\in I}&
\bigg(
\int_{\Oeta}\mathrm{tr}(\bT(t))\dx
+
\Vert \bu(t)\Vert_{L^2(\Oeta)}^2 
+
 \Vert\partial_t \eta(t)\Vert_{L^2(\omega)}^2 
+ 
\Vert\partial_y^2\eta(t)\Vert_{L^2(\omega)}^2
\bigg)
\\&+
\int_I
\int_{\Oeta}\mathrm{tr}(\bT)\dx\dt
+\int_I\Vert \nabx \bu \Vert_{L^2(\Oeta)}^2\dt
+\int_I\Vert\partial_t\partial_y \eta \Vert_{L^2(\omega)}^2\dt
\\&\lesssim
\int_{\Omega_{\eta_{0}}}\mathrm{tr}(\bT_0)\dx
+
\Vert \bu_0\Vert_{L^2(\Omega_{\eta_{0}})}^2 
+
 \Vert\eta_\star\Vert_{L^2(\omega)}^2 
+ 
\Vert\partial_y^2\eta_0\Vert_{L^2(\omega)}^2
\\&+
T\int_{\Omega_{\eta_{0}}}\rho_0 \dx
+
\int_I\Vert \bff\Vert_{L^2(\Oeta)}^2\dt
+
\int_I\Vert g\Vert_{L^2(\omega)}^2\dt
\end{aligned}
\end{equation}
holds.
\item[(f)] In addition, we have
\begin{equation}
\begin{aligned}   
\sup_{t\in I}&
\big(\Vert\rho(t)\Vert_{L^2(\Oeta)}^2
+
\Vert\bT(t)\Vert_{L^2(\Oeta)}^2
\big)
+
\int_I\big(\Vert \rho \Vert_{W^{1,2}(\Oeta)}^2
+
\Vert  \bT \Vert_{W^{1,2}(\Oeta)}^2\big)\dt
\\&+
\int_I \Vert\bT\Vert_{L^2(\Oeta)}^2\dt 
\lesssim
\Vert\rho_0 \Vert_{L^2(\Omega_{\eta_0})}^2
+
\big(
\Vert\bT_0\Vert_{L^2(\Omega_{\eta_0})}^2
+
T \Vert\rho_0  \Vert_{L^2(\Omega_{\eta_{0}})}^2
\big)
\\&\qquad\qquad\qquad\qquad\qquad
\times 
\exp\bigg(
c
\int_I
\Vert\nabx \bu\Vert_{L^2(\Oeta)}^2 \dt\bigg).
\end{aligned}
\end{equation}
\end{itemize}
\end{definition}
With this definition in hand, we now state our first main result.
\begin{theorem}
\label{thm:main}
For a dataset $(\bff, g, \rho_0, \bT_0, \bu_0, \eta_0, \eta_\star)$ 
that satisfies \eqref{mainDataForAll}, there  exists a %global  
weak solution $(\eta, \bu, \rho,\bT)$ of   \eqref{divfree}--\eqref{interface}.  
\end{theorem}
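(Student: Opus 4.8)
The plan is to construct the weak solution by a multi-layer approximation scheme that successively removes three obstructions: the nonlinearity of the convective and advection terms, the boundary/geometry coupling, and the infinite number of degrees of freedom. Throughout, the a priori estimates of Section~\ref{sec:Apriori} provide the uniform bounds that make each limit passage possible, so the argument is really about compactness and stability rather than new estimates.

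\textit{Step 1: Linearised, regularised, finite-dimensional system.} First I would fix a regularisation parameter by mollifying the domain-deforming displacement and the advecting velocity field, replacing $\eta$ and $\bu$ in the geometry and in the transport operators by regularised versions $\eta^\varrho$, $\bu^\varrho$ obtained through a spatial smoothing kernel, so that the moving domain $\Omega_{\eta^\varrho}$ is smooth in time and space and the Hanzawa transform enjoys the bounds \eqref{210and212}--\eqref{218}. On this frozen geometry, and with a \emph{given} regularised advection field, the solvent--structure subsystem \eqref{divfree}, \eqref{momEq}, \eqref{shellEQ} and the solute subsystem \eqref{rhoEqu}, \eqref{solute} decouple into two linear problems. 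For each I would build a Galerkin approximation: a basis adapted to the time-dependent function space for $(\bu,\partial_t\eta)$ satisfying the kinematic coupling $\bu\circ\bm\varphi_\eta=(\partial_t\eta)\bn$ (as in \cite{muha2019existence, BMSS}), and a basis on the moving domain for $\rho$ and $\bT$ with the Neumann conditions \eqref{bddCondSolv}. Existence of the finite-dimensional solution follows from Carath\'eodory/ODE theory, and the discrete analogues of the Section~\ref{sec:Apriori} estimates give bounds uniform in the Galerkin dimension. A Banach fixed point (or Schauder, on a short time interval then bootstrapped) in the advecting field closes the coupling between the two subsystems at the discrete level. Passing to the limit in the Galerkin dimension, using Aubin--Lions on the fixed regularised geometry, yields a (unique, by linearity) solution of the fully coupled linear regularised system.

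\textit{Step 2: Remove the frozen geometry and advection.} Next I would set up a Schauder fixed point map $\bu^\varrho\mapsto\eta$ (the new displacement) $\mapsto$ new geometry $\mapsto$ new solution, on a ball in a suitable space, where the regularisation $\varrho$ is kept fixed. The crucial compactness input is the Aubin--Lions-type theorem \cite[Theorem 5.1]{muha2019existence} for velocity fields on \emph{moving} domains, together with the estimates \eqref{211and213} controlling how the Hanzawa maps vary with $\eta$; continuity of the map follows because $\eta$ lives in a compact subset of $C(\overline I\times\omega)$ by the $W^{1,\infty}(I;L^2)\cap L^\infty(I;W^{2,2})$ bound and interpolation. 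The fixed point is then a weak solution of the fully nonlinear system on the $\varrho$-regularised geometry advected by the $\varrho$-regularised velocity; uniqueness is lost here because the nonlinearity plus low regularity prevent a contraction. For $\rho$ and $\bT$ one must also verify non-negativity of $\rho$ and positivity of $\bT$ pass through the approximation (via a renormalisation/maximum-principle argument on the parabolic equations \eqref{rhoEqu}, \eqref{solute} with the regularised transport), which is needed for the trace terms in the energy inequality (e) and for estimate (f).

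\textit{Step 3: Pass to the limit $\varrho\to0$.} Finally I would send the regularisation to zero. The uniform energy bounds of Section~\ref{sec:Apriori} — giving $\eta$ in $W^{1,\infty}(I;L^2)\cap W^{1,2}(I;W^{1,2})\cap L^\infty(I;W^{2,2})$, $\bu$ in $L^\infty(I;L^2)\cap L^2(I;W^{1,2}_{\divx})$, and $\rho,\bT$ in $L^\infty(I;L^2)\cap L^2(I;W^{1,2})$, all uniform in $\varrho$ as long as $\|\eta\|_{L^\infty}<L$, which the estimates themselves guarantee on $[0,T]$ — let me extract weakly/weakly-$*$ convergent subsequences, and the Aubin--Lions result upgrades the velocity and displacement convergences to strong convergence in $L^2$, which is exactly what is needed to pass to the limit in the quadratic terms $\bu\otimes\bu$, $(\nabx\bu)\bT$, $\bu\cdot\nabx\rho$, and in the boundary term $(\mathbb S\bn_\eta)\circ\bm\varphi_\eta$ of \eqref{shellEQ}. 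One delicate point is the convergence of the fluid terms on the $\varrho$-dependent domains: I would handle this by pulling everything back to the fixed reference domain $\Omega$ via $\bm\Psi_{\eta^\varrho}$, using \eqref{210and212}--\eqref{211and213} to control the transformed coefficients and their convergence, then pushing forward to $\Omega_\eta$ in the limit. The lower-semicontinuity of the norms gives the energy inequality (e) and the estimate (f) survives because the exponential factor depends only on $\int_I\|\nabx\bu\|_{L^2}^2$, which is uniformly bounded.

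\textit{Main obstacle.} I expect the hardest part to be Step~2 — running the Schauder fixed point that removes the artificial advection and frozen geometry — because the kinematic interface condition \eqref{interface} ties the test-function space for the momentum--shell equation to the unknown $\eta$, so the function spaces themselves move with the fixed point iterate; making the compactness of $\bu$ rigorous on these varying domains, ensuring the no-degeneracy condition $\|\eta\|_{L^\infty}<L$ is preserved along the iteration (so that the geometry stays well-defined), and checking continuity of the solution map in the face of the quadratic nonlinearities are all subtle. The Aubin--Lions theorem of \cite{muha2019existence} and the Hanzawa estimates \eqref{210and212}--\eqref{218} are precisely the tools that make this tractable, but assembling them correctly is where the real work lies; by comparison, Steps~1 and~3 are, respectively, standard Galerkin bookkeeping and a routine (if lengthy) weak-convergence argument driven by the Section~\ref{sec:Apriori} bounds.
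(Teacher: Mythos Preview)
Your proposal is correct and follows essentially the same three-layer scheme as the paper: a Banach fixed point at the Galerkin level on a frozen regularised geometry (Section~\ref{sec:Galerkin}), a Schauder/set-valued fixed point to remove the linearisation and frozen geometry using the Aubin--Lions result of \cite{muha2019existence} (Theorem~\ref{thm:linRegNon}), and finally the limit in the regularisation parameter (Section~\ref{sec:limit}). The only minor deviation is that in Step~3 the paper handles the moving-domain convergence by extending functions to a fixed ambient set $\Omega\cup S_\alpha$ via characteristic functions rather than pulling back to the reference domain, and it explicitly drops the positivity requirement on $\rho,\bT$ from the definition of weak solution (see the Remark after Theorem~\ref{thm:main}), so you need not worry about that part.
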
 
\begin{remark}
As in \cite{barrettBoy2011existence} (see also \cite[Section 8.2]{barrett2017existenceOldroyd}),  a solution $\bT$ of \eqref{solute} remains strictly positive if it were initially so.
Furthermore,  a solution $\rho$ of  \eqref{rhoEqu} also remains nonnegative if it were initially nonnegative.
Indeed,  if we test  \eqref{rhoEqu} with the nonpositive part $\rho_-=\min\{0,\rho\}$ of $\rho$, integrate over $\Oeta $ and use the boundary condition \eqref{bddCondSolv} together with Reynold's transport theorem, we obtain
\begin{align*}
\frac{1}{2}\frac{\dd}{\dt}\int_{\Oeta } \vert \rho_-\vert^2 \dx
+\int_{\Oeta } \vert \nabx \rho_-\vert^2\dx =0.
\end{align*} 
Therefore,   it follows that  $ \rho_-=0$ a.e. in $\Oeta$ for any $t\in I$ and thus, $ \rho=\rho_+=\max\{0,\rho\}$. Rigourously showing the strict positivity of $\rho$ an $\mathbb{T}$, however, requires another approximating layer. For simplicity, however, we omit the requirement of preserving positivity in our notion of a weak solutions. 
\end{remark}

%\begin{remark}
%As in \cite[Proposition 1]{constantin2012note},  a solution $\bT$ of \eqref{solute} advected by a velocity field $\bu\in L^2(I;W^{1,\infty}(\Oeta))$ remains strictly positive if it were initially so.
%Furthermore,  a solution $\rho$ of  \eqref{rhoEqu} also remains nonnegative if it were initially nonnegative.
%Indeed,  if we test  \eqref{rhoEqu} with the nonpositive part $\rho_-=\min\{0,\rho\}$ of $\rho$, integrate over $\Oeta $ and use the boundary condition \eqref{bddCondSolv} together with Reynold's transport theorem, we obtain
%\begin{align*}
%\frac{1}{2}\frac{\dd}{\dt}\int_{\Oeta } \vert \rho_-\vert^2 \dx
%+\int_{\Oeta } \vert \nabx \rho_-\vert^2\dx =0.
%\end{align*} 
%Therefore,   it follows that  $ \rho_-=0$ a.e. in $\Oeta$ for any $t\in I$ and thus, $ \rho=\rho_+=\max\{0,\rho\}$. 
%\end{remark} 
We note that since the test function for the (weak) momentum equation in \eqref{weakFluidStrut} is divergence-free, our weak formulation is consequently pressure-free. 
However, when the weak solution and the forcing $\bff$ are sufficiently regular, the pressure can be recovered by solving an elliptic problem obtained by taking the divergence in \eqref{momEq}.
A first step to obtaining said regularity is by showing the existence of a strong solution which is the subject of our second main result. Here, by a `strong solution', we mean the following.
\begin{definition}[Strong solution]
\label{def:strongSolution}
Let $(\bff, g, \rho_0, \bT_0, \bu_0, \eta_0, \eta_\star)$
be a dataset that satisfies
\begin{equation}
\begin{aligned}
\label{mainDataForAllStrong}
&\bff \in L^2(I;L^{2}_\mathrm{loc}(\mathbb{R}^2)),
\qquad g\in L^2(I;L^{2}(\omega)) ,
\\&
\eta_0 \in W^{3,2}(\omega) \text{ with } \Vert \eta_0 \Vert_{L^\infty( \omega)} < L, \quad \eta_\star \in W^{1,2}(\omega),
\\&\bu_0 \in W^{1,2}_{\divx}(\Omega_{\eta_0} )\text{ is such that }\bu_0 \circ \bm{\varphi}_{\eta_0} =\eta_\star \bn \text{ on } \omega,
\\&
\rho_0\in W^{1,2}(\Omega_{\eta_0}), \quad
\bT_0\in W^{1,2}(\Omega_{\eta_0}),
\\&
\rho_0\geq 0,\,\, \bT_0>0 \quad \text{a.e. in } \Omega_{\eta_0}.
\end{aligned}
\end{equation}
We call 
$(\eta, \bu, p, \rho,\bT)$ 
a \textit{strong solution} of   \eqref{divfree}--\eqref{interface} with data $(\bff, g, \rho_0, \bT_0, \bu_0, \eta_0, \eta_\star)$  if:
\begin{itemize}
\item[(a)]  $(\eta, \bu, \rho,\bT)$ is a weak solution of   \eqref{divfree}--\eqref{interface};
\item[(b)] the structure-function $\eta$ is such that $
\Vert \eta \Vert_{L^\infty(I \times \omega)} <L$ and
\begin{align*}
\eta \in &W^{1,\infty}\big(I;W^{1,2}(\omega)  \big)\cap L^{\infty}\big(I;W^{3,2}(\omega)  \big) \cap  W^{1,2}\big(I; W^{2,2}(\omega)  \big)
\\&\cap  W^{2,2}\big(I;L^{2}(\omega)  \big)  
\cap
L^{2} (I;W^{4,2}(\omega))
;
\end{align*}
\item[(c)] the velocity $\bu$ is such that $\bu  \circ \bm{\varphi}_{\eta} =(\partial_t\eta)\bn$ on $I\times \omega$ and
\begin{align*} 
\bu\in  W^{1,2} \big(I; L^2_{\divx}(\Oeta ) \big)\cap L^2\big(I;W^{2,2}(\Oeta)  \big);
\end{align*}
\item[(d)] the pressure $p$ is such that 
\begin{align*}
p\in L^2\big(I;W^{1,2}(\Oeta)  \big);
\end{align*}
\item[(e)] the pair $(\rho,\bT)$ is such that 
\begin{align*}
\rho,\bT \in W^{1,2}\big(I;L^{2}(\Oeta)  \big) \cap L^\infty\big(I;W^{1,2}(\Oeta)  \big)\cap L^2\big(I;W^{2,2}(\Oeta)  \big);
\end{align*}
\item[(f)] equations \eqref{divfree}--\eqref{solute} are satisfied a.e. in spacetime with $\eta(0)=\eta_0$ and $\partial_t\eta(0)=\eta_\star$ a.e. in $\omega$, as well as $\bfu(0)=\bfu_0$, $\rho(0)=\rho_0$ and $\bT(0)=\bT_0$ a.e. in $\Omega_{\eta_0}$.
\end{itemize}
\end{definition}
Our second main result concerning the existence of a strong solution is given as follows:
\begin{theorem}
\label{thm:strongSol}
For a dataset $(\bff, g, \rho_0,  \bT_0, \bu_0, \eta_0, \eta_\star)$ 
that satisfies \eqref{mainDataForAllStrong}, there  exists a 
%global  
strong solution $(\eta, \bu,p, \rho,\bT)$ of   \eqref{divfree}--\eqref{interface}. 
\end{theorem}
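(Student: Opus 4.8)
\textbf{Plan of proof for Theorem~\ref{thm:strongSol}.}
The strategy is a fixed-point argument on a short time interval, followed by a continuation argument that extends the strong solution to the maximal time at which the structure displacement remains nondegenerate (i.e.\ $\|\eta\|_{L^\infty(I\times\omega)}<L$), at which point the solution is in fact global in time under the stated hypothesis that no such degeneracy occurs. First I would set up the functional-analytic framework on the moving domain: using the Hanzawa transform $\bm{\Psi}_\eta$ together with the estimates \eqref{210and212}--\eqref{218}, one pulls back the whole system \eqref{divfree}--\eqref{solute} to the fixed reference domain $\Omega$, where the transformed equations have coefficients depending smoothly on $\eta$ and $\nabx\eta$. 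One then defines a solution map $\mathcal{S}$ on a ball in the space of functions with the regularity dictated by Definition~\ref{def:strongSolution}(b)--(e): given a velocity field $\tilde{\bu}$ and displacement $\tilde\eta$ in this ball (with $\|\tilde\eta\|_{L^\infty}<L$), solve in turn (i) the linear parabolic transport--diffusion problems for $(\rho,\bT)$ advected by $\tilde{\bu}$ on the geometry induced by $\tilde\eta$, obtaining $W^{1,2}(I;L^2)\cap L^2(I;W^{2,2})$ bounds via maximal regularity; and (ii) the linear fluid--structure subproblem — a Stokes-type system coupled to the shell equation \eqref{shellEQ} through the kinematic coupling $\bu\circ\bm{\varphi}_\eta=(\partial_t\eta)\bn$ and the dynamic coupling on the interface — with right-hand side $\bff+\divx\bT$ and the given convective term, again using the linear theory for such coupled systems to obtain $\bu\in W^{1,2}(I;L^2_{\divx})\cap L^2(I;W^{2,2})$, the pressure $p\in L^2(I;W^{1,2})$, and the structure regularity in Definition~\ref{def:strongSolution}(b). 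The a priori estimates of Section~\ref{sec:Apriori}, together with the higher-order estimates obtained by differentiating the equations and testing appropriately, guarantee that for a sufficiently short time $T_0$ the map $\mathcal{S}$ sends the ball into itself.

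The core of the argument is then to show that $\mathcal{S}$ is a contraction on $[0,T_0]$ for $T_0$ small, which is carried out exactly as flagged in the work plan: given two iterates, one transforms the second solution onto the geometry of the first (rather than both onto the reference configuration) so as to retain the best available regularity of the domain on which the difference equation is posed, following the technique of \cite{guidoboni2012continuous, schwarzacher2022weak, BMSS}. One writes the difference system, which picks up (a) genuine difference terms that are linear in the difference of the unknowns, estimated by the contraction constant times $T_0^\theta$ for some $\theta>0$, and (b) commutator-type terms coming from the mismatch of the two Hanzawa maps, controlled by \eqref{211and213} and \eqref{218} in terms of $\|\eta-\tilde\eta\|$. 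Testing the difference equations with the differences of the unknowns (and with suitably time-differentiated test functions for the higher-order norms) and using the energy structure of the coupled fluid--structure system — so that the boundary terms from the Reynolds transport theorem on the moving interface cancel against the kinematic coupling — yields a Gronwall-type inequality that closes for $T_0$ small. Here the second difficulty noted in the work plan enters: because strong solutions are only barely above the weak regularity threshold, naive Hölder splitting of the nonlinear products $\bu\cdot\nabx\bu$, $(\nabx\bu)\bT$, etc., demands more regularity than is available, so one must instead use the equivalence of the relevant non-square-integrable Sobolev norms with square-integrable fractional Sobolev norms and interpolate sharply between the $L^\infty(I;\cdot)$ and $L^2(I;\cdot)$ endpoints; the parabolic scaling then provides the decisive factor of $T_0^\theta$. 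Once $\mathcal{S}$ is shown to be a contraction, the Banach fixed-point theorem gives a unique local strong solution, which by construction is also a weak solution in the sense of Definition~\ref{def:weaksolmartFP}, hence satisfies Definition~\ref{def:strongSolution}(a).

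Finally I would run the standard continuation/bootstrap argument: the local solution is extended by restarting the fixed-point scheme from time $T_0$ with the attained state as new data (which lies in the data class \eqref{mainDataForAllStrong} because of the regularity in Definition~\ref{def:strongSolution}(b)--(e) evaluated at $T_0$), and one shows that the only obstruction to continuation is either blow-up of the strong norms or degeneracy $\|\eta\|_{L^\infty}\uparrow L$; the global energy bounds of Section~\ref{sec:Apriori} combined with the higher-order estimates rule out the former on any interval where $\|\eta\|_{L^\infty}$ stays bounded away from $L$, so the solution exists up to the first such degeneracy time, and globally if none occurs. I expect the contraction step — specifically the estimate of the difference of the two nonlinear systems transplanted onto a single moving geometry, with the combined burden of geometric commutator terms and the low-regularity interpolation for the nonlinear products — to be the main obstacle; the construction of the local solution map and the continuation argument are comparatively routine once the linear fluid--structure and parabolic theories, the Bogovskij operator, and the Hanzawa estimates \eqref{210and212}--\eqref{218} are in place.
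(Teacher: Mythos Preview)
Your plan is broadly correct but takes a different route from the paper in two structural respects. First, the paper runs the fixed point on the \emph{solute} pair $(\underline\rho,\underline{\bT})$ rather than on $(\tilde\bu,\tilde\eta)$: the map is $\mathtt T=\mathtt T_1\circ\mathtt T_2$ with $\mathtt T_2(\underline\rho,\underline{\bT})=(\eta,\bu,p)$ obtained from the already available \emph{nonlinear} solvent--structure result \cite[Theorem~2.5]{breit2022regularity}, and $\mathtt T_1(\eta,\bu,p)=(\rho,\bT)$ from the linear solute theory (Theorem~\ref{thm:mainFP}); this lets the paper treat the fluid--structure coupling as a black box instead of redoing a linear Stokes--shell theory. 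Second, and more importantly, the paper does \emph{not} contract in the strong norm: it shows $\mathtt T:B_R\to B_R$ in $X_\eta=W^{1,2}_tL^2_x\cap L^\infty_tW^{1,2}_x\cap L^2_tW^{2,2}_x$ but proves contraction only in the weaker energy space $Y_\eta=L^\infty_tL^2_x\cap L^2_tW^{1,2}_x$, testing the difference equation for $\bT_{12}$ merely with $\bT_{12}$ itself --- no time-differentiated test functions --- and closing via $\int_{I_*}\|\underline{\bT}_{12}\|_{L^2}^2\dt\le T_*\|\underline{\bT}_{12}\|_{Y_{\eta_1}}^2$ together with the fluid--structure difference estimate of \cite[Remark~5.2]{BMSS}. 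Your strong-norm contraction would work but is heavier; the two-norm trick sidesteps exactly the low-regularity interpolation issue you identify as the main obstacle. (A minor inconsistency: you pull back to the reference domain for the map but transform onto the first geometry for the contraction; the paper stays on the moving domain throughout and uses only the latter transformation.) Globality then follows directly from Proposition~\ref{prop:strongEst}, as you say.
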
 
Finally, a consequence of Theorem \ref{thm:strongSol} and its proof is the following unconditional weak-strong uniqueness result.
\begin{theorem}\label{thm:weakstrong}
Let $(\eta_w, \bu_w, \rho_w,\bT_w)$   be a  weak solution of   \eqref{divfree}--\eqref{interface} with dataset $(\bff_w, g_w, \rho_{0,w}, \bT_{0,w}, \bu_{0,w}, \eta_{0,w}, \eta_{\star,w})$ in the sense of Definition \ref{def:weaksolmartFP} and let $(\eta_s, \bu_s, \rho_s,\bT_s)$   be a  strong solution of   \eqref{divfree}--\eqref{interface} with dataset $(\bff_s, g_s, \rho_{0,s}, \bT_{0,s}, \bu_{0,s}, \eta_{0,s}, \eta_{\star,s})$ in the sense of Definition \ref{def:strongSolution}.
Set
%\todo{weak solution is pressure-free! Introduce pressure!} 
\begin{align*}
&\rho_{ws}:=\rho_w-\overline{\rho}_s,
\quad
\bT_{ws}:=\bT_w-\overline{\bT}_s,
\quad
\bu_{ws}=\bu_w-\overline{\bu}_s, 
\\&
\eta_{ws}=\eta_w-\eta_s,
\quad
\bff_{ws}:=\bff_w-\overline{\bff}_s,
\quad
g_{ws}=g_w-g_s, 
\end{align*} 
where $\overline{f}_s:=f_s\circ \bm{\Psi}_{\eta_s-\eta_w}$. 
%Assume that
%$\mathbf f_w,\underline{\mathbf f}_s\in L^2(I;L^2(\Omega_{\eta_w}))$ and $g_w,g_s\in L^2(I;L^2(\omega))$.
Then the inequality 
\begin{align*}
&\sup_{t\in I}\Big(
\Vert\rho_{ws}(t)\Vert_{L^2(\Omega_{\eta_w(t)})}^2
+
\Vert\bT_{ws}(t)\Vert_{L^2(\Omega_{\eta_w(t)})}^2
+
\Vert \bu_{ws}(t)\Vert_{L^2(\Omega_{\eta_w(t)})}^2 
+
 \Vert\partial_t \eta_{ws}(t)\Vert_{L^2(\omega)}^2 \Big)
\\
&\qquad\qquad\qquad+ 
\sup_{t\in I}
\Vert\partial_y^2\eta_{ws}(t)\Vert_{L^2(\omega)}^2
+\int_I\Big(
\Vert \nabx\rho_{ws} \Vert_{L^2(\Omega_{\eta_w})}^2
+
\Vert  \nabx\bT_{ws} \Vert_{L^2(\Omega_{\eta_w})}^2
\Big)
\dt
\\
&\qquad\qquad\qquad+
\int_I\Big(
\Vert \nabx \bu_{ws}\Vert_{L^2(\Omega_{\eta_w})}^2 
+
\Vert \partial_t\naby  \eta_{ws} \Vert_{L^2(\omega)}^2
\Big)
\dt
\\
&\lesssim 
\Vert\rho_{ws}(0) \Vert_{L^2(\Omega_{\eta_w(0)})}^2
+ 
\Vert\bT_{ws}(0)\Vert_{L^2(\Omega_{\eta_w(0)})}^2
+
\Vert \bu_{ws}(0) \Vert_{L^2(\Omega_{\eta_w(0)})}^2
+\Vert\partial_t \eta_{ws}(0)\Vert_{L^2(\omega)}^2
\\
&\qquad\qquad\qquad 
+\Vert \naby^2 \eta_{ws}(0) \Vert_{L^2(\omega)}^2+
\int_I\Big(\Vert \mathbf f_{ws}\Vert_{L^2(\Omega_{\eta_w})}^2+\Vert g_{ws}\Vert_{L^2(\omega)}^2 \Big)\dt
\end{align*}
hold.
\end{theorem}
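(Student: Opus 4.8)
\textbf{Proof proposal for Theorem \ref{thm:weakstrong}.}

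The plan is to derive a Gr\"onwall-type differential inequality for the combined energy of the difference quantities and then close it. The central difficulty, already flagged in the work plan, is that $(\eta_w,\bu_w,\rho_w,\bT_w)$ and $(\eta_s,\bu_s,\rho_s,\bT_s)$ live on two different moving domains $\Omega_{\eta_w(t)}$ and $\Omega_{\eta_s(t)}$, so the very first step is to pull the strong solution back onto the geometry of the weak solution via the Hanzawa map $\bm{\Psi}_{\eta_s-\eta_w}$, producing the barred quantities $\overline{\bu}_s,\overline{\rho}_s,\overline{\bT}_s,\overline{\bff}_s$. I would record the transformed equations satisfied by these barred quantities: each of \eqref{divfree}--\eqref{solute}, after composition with $\bm{\Psi}_{\eta_s-\eta_w}$, picks up commutator-type terms involving $\partial_t\bm{\Psi}_{\eta_s-\eta_w}$, $\nabx\bm{\Psi}_{\eta_s-\eta_w}$ and their inverses; by \eqref{211and213} and \eqref{218} these are all controlled by $\eta_{ws}=\eta_w-\eta_s$ in the relevant Besov/Sobolev norms, with constants depending only on the reference geometry and the uniform bounds $\|\eta_w\|_{L^\infty},\|\eta_s\|_{L^\infty}<L$. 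In particular $\overline{\bu}_s$ is not exactly divergence-free, so I would either correct it with a time-dependent Bogovskij term $\mathrm{Bog}_{\eta_w}$ (which vanishes on the boundary, commutes with $\partial_t$, and maps $W^{s,p}\to W^{s+1,p}$) or carry the divergence defect as a controlled source term.

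Next I would set up the difference testing. Subtract the (transformed) strong-solution equations from the weak-solution equations. Test the $\rho_{ws}$-equation with $\rho_{ws}$, the $\bT_{ws}$-equation with $\bT_{ws}$, and — this is the delicate point — test the coupled momentum/shell difference equation \eqref{weakFluidStrut} with the pair $(\bu_{ws},\partial_t\eta_{ws})$; this is an \emph{admissible} test pair precisely because $\bu_{ws}\circ\bm{\varphi}_{\eta_w}=(\partial_t\eta_{ws})\bn$ on $I\times\omega$ by the kinematic coupling \eqref{interface} for both solutions (after the pullback matches the traces on $\omega$). The time derivative $\tfrac{\dd}{\dt}\int_{\Omega_{\eta_w}}|\bu_{ws}|^2$ and $\tfrac{\dd}{\dt}\int_{\Omega_{\eta_w}}|\rho_{ws}|^2$ etc. each produce, via the Reynolds transport theorem on the moving domain $\Omega_{\eta_w(t)}$, boundary contributions on $\partial\Omega_{\eta_w(t)}$ carrying the velocity $\partial_t\eta_w\bn$; these must be combined carefully with the convective terms $(\bu_w\cdot\nabx)(\cdot)$ so that, using $\divx\bu_w=0$ and the boundary conditions \eqref{bddCondSolv}, the top-order transport terms cancel. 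The shell test yields the good terms $\tfrac12\tfrac{\dd}{\dt}(\|\partial_t\eta_{ws}\|_{L^2(\omega)}^2+\|\partial_y^2\eta_{ws}\|_{L^2(\omega)}^2)+\|\partial_t\partial_y\eta_{ws}\|_{L^2(\omega)}^2$, while the fluid viscosity, the $\rho$- and $\bT$-diffusion, and the damping $-2(\bT_{ws}-\rho_{ws}\mathbb I):\bT_{ws}$ give the coercive dissipation terms $\|\nabx\bu_{ws}\|^2$, $\|\nabx\rho_{ws}\|^2$, $\|\nabx\bT_{ws}\|^2$ on the left-hand side.

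The remaining work is to absorb every error term into the dissipation plus a Gr\"onwall integrand. The genuinely nonlinear differences — $(\bu_w\cdot\nabx)\bu_w-(\overline{\bu}_s\cdot\nabx)\overline{\bu}_s$, the stress coupling $\divx(\bT_w-\overline{\bT}_s)$, the quadratic velocity-gradient terms $(\nabx\bu)\bT$ and $\bT(\nabx\bu)^\top$ in \eqref{solute}, the convective terms for $\rho$ and $\bT$, and the geometric commutators from the pullback — all must be estimated by H\"older plus the Sobolev embeddings available in two space dimensions, using the $L^\infty_tL^2_x\cap L^2_tW^{1,2}_x$ control on the weak quantities and the \emph{strong} regularity $\overline{\bu}_s\in L^2_tW^{2,2}_x\cap L^\infty_tW^{1,2}_x$, $\overline{\rho}_s,\overline{\bT}_s\in L^\infty_tW^{1,2}_x\cap L^2_tW^{2,2}_x$ to put all undifferentiated or low-order factors on the strong side. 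As the work plan stresses, the borderline terms require the interpolation trick of replacing non-$L^2$-based fractional spaces by equivalent $L^2$-based ones so that the $\varepsilon$-Young splitting leaves a small constant times the dissipation and a factor $(\|\nabx\bu_s\|_{W^{1,2}}^2+\|\eta_s\|_{W^{4,2}}^2+\dots+1)$, which is integrable in time by the strong-solution bounds, multiplying the energy. The forcing difference contributes $\int_I(\|\bff_{ws}\|_{L^2(\Omega_{\eta_w})}^2+\|g_{ws}\|_{L^2(\omega)}^2)\dt$ directly. Collecting everything yields
\[
\frac{\dd}{\dt}E(t)+D(t)\lesssim E(t)\,\Lambda(t)+\|\bff_{ws}(t)\|_{L^2(\Omega_{\eta_w})}^2+\|g_{ws}(t)\|_{L^2(\omega)}^2,
\]
with $E$ the full difference energy, $D$ the dissipation, and $\Lambda\in L^1(I)$; Gr\"onwall's lemma then gives the asserted estimate. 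I expect the main obstacle to be exactly the bookkeeping of the pullback commutators together with the moving-boundary Reynolds terms: showing that the apparently top-order boundary contributions from $\tfrac{\dd}{\dt}$ on $\Omega_{\eta_w(t)}$ and the geometric error terms genuinely close against the dissipation (rather than merely against a higher norm one does not control) is where the $2$-D Sobolev embeddings and the fractional-space equivalences have to be used most sharply.
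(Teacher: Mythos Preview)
Your proposal is correct and follows essentially the same route as the paper: pull the strong solution back to $\Omega_{\eta_w}$ via $\bm{\Psi}_{\eta_s-\eta_w}$, derive the difference energy identity with geometric commutator remainders controlled by \eqref{210and212}--\eqref{218}, estimate these remainders using the strong solution's regularity together with the 2D embeddings (exactly as in the contraction argument of Section~\ref{subsec:fully}), and close by Gr\"onwall. One clarification: the Bogovskij correction is not optional here---since $\overline{\bu}_s$ fails to be divergence-free, the pair $(\bu_{ws},\partial_t\eta_{ws})$ is \emph{not} admissible in \eqref{weakFluidStrut}, and the paper (following \cite{BMSS}) uses $\bu_{ws}+\mathrm{Bog}_{\eta_w}(\Div\overline{\bu}_s)$ as the actual test function, which then produces the additional $\mathrm{Bog}$-terms appearing in $\mathfrak{R}_1$ and $\mathfrak{R}_4$; relatedly, because the weak solution only satisfies the energy \emph{inequality}, the derivation is a relative-energy argument (energy inequality for $\bu_w$ combined with the weak formulation tested against the corrected strong solution) rather than a literal subtraction-and-test.
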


\section{A priori estimates} 
\label{sec:Apriori}
In this section, we derive formal estimates in energy norms satisfied by smooth solutions of \eqref{divfree}-\eqref{solute}. The first two estimates will be shown to be satisfied by weak solutions in subsequent sections and the last estimate will be satisfied by a strong solution.
\begin{proposition}
Any smooth solution $(\eta, \bu, p, \rho,\bT)$ 
 of   \eqref{divfree}--\eqref{interface} with smooth dataset $(\bff, g, \rho_0, \bT_0, \bu_0, \eta_0, \eta_\star)$ satisfies
\begin{equation}
\begin{aligned}
\label{apriori1}
\sup_{t\in I}&
\bigg( 
\int_{\Oeta}\mathrm{tr}(\bT(t))\dx
+
\Vert \bu(t)\Vert_{L^2(\Oeta)}^2 
+
 \Vert\partial_t \eta(t)\Vert_{L^2(\omega)}^2 
+ 
\Vert\partial_y^2\eta(t)\Vert_{L^2(\omega)}^2
\bigg)
\\&+
\int_I
\int_{\Oeta}\mathrm{tr}(\bT)\dx\dt
+\int_I\Vert \nabx \bu \Vert_{L^2(\Oeta)}^2\dt
+\int_I\Vert\partial_t\partial_y \eta \Vert_{L^2(\omega)}^2\dt
\\&\lesssim
\int_{\Omega_{\eta_{0}}}\mathrm{tr}(\bT_0)\dx
+
\Vert \bu_0\Vert_{L^2(\Omega_{\eta_{0}})}^2 
+
 \Vert\eta_\star\Vert_{L^2(\omega)}^2 
+ 
\Vert\partial_y^2\eta_0\Vert_{L^2(\omega)}^2
\\&+
T
\int_{\Omega_{\eta_{0}}}\rho_0 \dx
+
\int_I\Vert \bff\Vert_{L^2(\Oeta)}^2\dt
+
\int_I\Vert g\Vert_{L^2(\omega)}^2\dt.
\end{aligned}
\end{equation}
\end{proposition}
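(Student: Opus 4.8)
The plan is to test each PDE with the natural quantity, integrate by parts carefully keeping track of boundary contributions on $\partial\Oeta$, and exploit massive cancellation coming from the kinematic coupling \eqref{interface} together with the no-flux conditions \eqref{bddCondSolv}. First I would treat the solute equations. Integrating the $\rho$-equation \eqref{rhoEqu} over $\Oeta$, using Reynolds' transport theorem, the divergence-free condition \eqref{divfree}, the interface condition \eqref{interface} (so that the transport boundary term matches the domain-deformation boundary term and cancels), and the homogeneous Neumann condition $\bn_\eta\cdot\nabx\rho=0$ from \eqref{bddCondSolv}, I obtain conservation of mass $\frac{\mathrm d}{\dt}\int_{\Oeta}\rho\dx=0$, hence $\int_{\Oeta(t)}\rho\dx=\int_{\Omega_{\eta_0}}\rho_0\dx$ for all $t$; integrating in time gives the factor $T\int_{\Omega_{\eta_0}}\rho_0\dx$. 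Next I would take the trace of the $\bT$-equation \eqref{solute}. Using $\mathrm{tr}((\nabx\bu)\bT+\bT(\nabx\bu)^\top)=2\,\bT:\nabx\bu$ together with $\mathrm{tr}(\mathbb I)=2$ and the Neumann condition for $\bT$, one gets, after the same transport/Reynolds cancellation,
\begin{align*}
\frac{\mathrm d}{\dt}\int_{\Oeta}\mathrm{tr}(\bT)\dx + 2\int_{\Oeta}\mathrm{tr}(\bT)\dx = 2\int_{\Oeta}\bT:\nabx\bu\dx + 4\int_{\Oeta}\rho\dx.
\end{align*}
Note the crucial point that $2\,\bT:\nabx\bu$ reappears with the opposite sign when one tests the momentum equation: $K\divx\bT$ tested against $\bu$ produces $-K\int\bT:\nabx\bu$ plus a boundary term, and it is precisely this coupling that makes the energy identity closed without needing $\bT$ in $W^{1,2}$.

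The second block is the fluid–structure energy identity. Here I would test the momentum equation \eqref{momEq} with $\bu$ and the shell equation \eqref{shellEQ} with $\partial_t\eta$, add them, and use the standard fluid–structure cancellation: the Reynolds transport boundary term from $\frac{\mathrm d}{\dt}\int_{\Oeta}|\bu|^2$, the convective boundary term, and the boundary term $\int_{\partial\Oeta}(\mathbb S\bn_\eta)\cdot\bu$ produced by integrating $\divx\mathbb S$ by parts all combine — via $\bu\circ\bm\varphi_\eta=(\partial_t\eta)\bn$ and $\det(\partial_y\bm\varphi_\eta)$ as the surface-measure Jacobian — with the stress term on the right of \eqref{shellEQ} and cancel. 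What survives is
\begin{align*}
\frac12\frac{\mathrm d}{\dt}\Big(\Vert\bu\Vert_{L^2(\Oeta)}^2+\Vert\partial_t\eta\Vert_{L^2(\omega)}^2+\Vert\partial_y^2\eta\Vert_{L^2(\omega)}^2\Big)+\Vert\nabx\bu\Vert_{L^2(\Oeta)}^2+\Vert\partial_t\partial_y\eta\Vert_{L^2(\omega)}^2 = \int_{\Oeta}\bff\cdot\bu\dx + \int_\omega g\,\partial_t\eta\dy - K\int_{\Oeta}\bT:\nabx\bu\dx,
\end{align*}
where I have already moved the $-K\int\bT:\nabx\bu$ term (coming from the $K\divx\bT$ term, minus its boundary contribution, which is absorbed into the cancellation just described) to the right-hand side.

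Finally I would add a suitable positive multiple of the $\mathrm{tr}(\bT)$ identity to the fluid–structure identity so that the $\pm\bT:\nabx\bu$ terms cancel exactly (with the parameters normalized to $1$ as stated, the coefficients match directly), yielding
\begin{align*}
\frac{\mathrm d}{\dt}\mathcal E(t) + \int_{\Oeta}\mathrm{tr}(\bT)\dx + \Vert\nabx\bu\Vert_{L^2(\Oeta)}^2 + \Vert\partial_t\partial_y\eta\Vert_{L^2(\omega)}^2 \lesssim \Vert\bff\Vert_{L^2(\Oeta)}^2 + \Vert g\Vert_{L^2(\omega)}^2 + \Vert\bu\Vert_{L^2(\Oeta)}^2 + \Vert\partial_t\eta\Vert_{L^2(\omega)}^2 + \int_{\Oeta}\rho\dx,
\end{align*}
with $\mathcal E=\int_{\Oeta}\mathrm{tr}(\bT)\dx+\Vert\bu\Vert_{L^2(\Oeta)}^2+\Vert\partial_t\eta\Vert_{L^2(\omega)}^2+\Vert\partial_y^2\eta\Vert_{L^2(\omega)}^2$; here I have used Young's inequality on $\int\bff\cdot\bu$ and $\int g\,\partial_t\eta$, absorbing the gradient terms and (for the shell) using $\Vert\partial_t\eta\Vert_{L^2}\lesssim\Vert\partial_t\partial_y\eta\Vert_{L^2}$ available by the periodicity of $\omega$ and conservation of the mean of $\partial_t\eta$ (or simply keeping $\Vert\partial_t\eta\Vert_{L^2}^2$ on the right and applying Grönwall). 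Since $\int_{\Oeta}\rho\dx=\int_{\Omega_{\eta_0}}\rho_0\dx$ is constant in time, Grönwall's lemma in $\mathcal E$ then closes the estimate and produces exactly \eqref{apriori1}, with the $T\int_{\Omega_{\eta_0}}\rho_0\dx$ term arising from time-integrating the constant $\int\rho\dx$.

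\textbf{Main obstacle.} The delicate point is the bookkeeping of boundary integrals on the moving interface $\partial\Oeta$: one must verify that the Reynolds transport term, the convective term $\int_{\partial\Oeta}(\bu\cdot\bn_\eta)|\bu|^2$, and the viscoelastic stress term $\int_{\partial\Oeta}(\mathbb S\bn_\eta)\cdot\bu$ all carry compatible Jacobian factors $\det(\partial_y\bm\varphi_\eta)$ so that, under the change of variables to $\omega$ via $\bm\varphi_\eta$ and the substitution $\bu\circ\bm\varphi_\eta=(\partial_t\eta)\bn$, they exactly cancel against the right-hand side of \eqref{shellEQ}; and similarly that the no-flux conditions in \eqref{bddCondSolv} kill the remaining boundary terms in the $\rho$- and $\bT$-identities. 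Everything else is Young's inequality and Grönwall. Since this is the formal a priori computation for smooth solutions, all integrations by parts are justified and no additional regularity issues arise.
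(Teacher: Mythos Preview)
Your proposal is correct and follows essentially the same approach as the paper: test the momentum--shell system with $(\bu,\partial_t\eta)$, take the trace of the $\bT$-equation and integrate, use conservation of $\int_{\Oeta}\rho\,\dx$, and observe that the coupling term $\int_{\Oeta}\bT:\nabx\bu\,\dx$ cancels between the two identities. The only cosmetic difference is that the paper absorbs $\tfrac14\sup_t\Vert\bu\Vert_{L^2}^2$ and $\tfrac14\sup_t\Vert\partial_t\eta\Vert_{L^2}^2$ directly into the left-hand side rather than invoking Gr\"onwall, but both closures are standard and equivalent here.
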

\begin{proof}
Take $(\bm{\phi},\phi)=(\bu,\partial_t\eta)$ in \eqref{weakFluidStrut} to obtain 
\begin{equation}
\begin{aligned}
\label{enerFormalFluiStru}
\frac{1}{2}\int_I  \frac{\mathrm{d}}{\dt}\Big(\Vert \bu\Vert_{L^2(\Oeta)}^2 
&+
 \Vert\partial_t \eta\Vert_{L^2(\omega)}^2 
+ 
\Vert\partial_y^2\eta\Vert_{L^2(\omega)}^2
\Big)\dt
\\&\qquad+
 \int_I\Big(\Vert \nabx \bu\Vert_{L^2(\Oeta)}^2
 +
 \Vert\partial_t\partial_y \eta\Vert_{L^2(\omega)}^2\Big)\dt
\\&
=-\int_I\int_{\Oeta}\bT :\nabx \bu \dx\dt
+
\int_I\int_{\Oeta}\bff \cdot \bu \dx\dt
\\&\qquad+
\int_I \int_\omega g\partial_t \eta\dy\dt
\end{aligned}
\end{equation}
where
\begin{align*}
&\int_I\int_{\Oeta}\bff \cdot \bu \dx\dt
\leq
c
\int_I\Vert \bff\Vert_{L^2(\Oeta)}^2\dt
+
\frac{1}{4}
\sup_{t\in I}\Vert\bu(t)\Vert_{L^2(\Oeta)}^2 ,
\\&
\int_I \int_\omega g\partial_t \eta\dy\dt
\leq
c
\int_I\Vert g\Vert_{L^2(\omega)}^2\dt
+
\frac{1}{4}
\sup_{t\in I}\Vert\partial_t \eta(t)\Vert_{L^2(\omega)}^2 .
\end{align*}
On the other hand, if we take the trace in \eqref{solute}, integrate and use \eqref{bddCondSolv}  and the relation $\mathrm{tr}(\mathbb{A}\mathbb{B}^\top)=\mathbb{A}:\mathbb{B}$ which holds for all $\mathbb{A},\mathbb{B}\in \mathbb{R}^{d\times d}$, we obtain
\begin{equation}
\begin{aligned}
\label{freeEnergyEst}
\frac{1}{2}\int_I\frac{\dd}{\dt}\int_{\Oeta} \mathrm{tr}(\bT)\dx\dt
+&
  \int_I
  \int_{\Oeta}
  \mathrm{tr}(\bT)\dx\dt 
\\&=
\int_I
\int_{\Oeta}
 \bT:\nabx \bu \dx \dt
+3
\int_I
\int_{\Oeta}\rho \dx\dt.
%\\&
%+
%\int_I\int_{\partial\Oeta}\bn_\eta \cdot\nabx\mathrm{tr}(\bT)\dd\mathcal{H}^1\dt.
\end{aligned}
\end{equation}
Now note that due to the Neumann boundary condition \eqref{bddCondSolv} for $\rho$, if we integrate \eqref{rhoEqu} over space, apply Gauss theorem and the fact that the left-hand side is transported by divergence-free velocity, it follows that
\begin{align*}
\int_{\Oeta}\rho \dx=\int_{\Omega_{\eta_{0}}}\rho_0 \dx.
\end{align*} 
Combining everything finishes the proof.
\end{proof}

\begin{proposition}  
Any smooth solution $(\eta, \bu, p,\rho,\bT)$
 of   \eqref{divfree}--\eqref{interface} with smooth dataset $(\bff, g, \rho_0, \bT_0, \bu_0, \eta_0, \eta_\star)$  satisfies  
\begin{equation}
\begin{aligned}
\label{strgEstFP3}  
\sup_{t\in I}&
\big(\Vert\rho(t)\Vert_{L^2(\Oeta)}^2
+
\Vert\bT(t)\Vert_{L^2(\Oeta)}^2
\big)
+\int_I\big(\Vert \rho \Vert_{W^{1,2}(\Oeta)}^2
+
\Vert  \bT \Vert_{W^{1,2}(\Oeta)}^2
\big)\dt
\\&
\lesssim
\Vert\rho_0 \Vert_{L^2(\Omega_{\eta_0})}^2
+
\big(
\Vert\bT_0\Vert_{L^2(\Omega_{\eta_0})}^2
+
T \Vert\rho_0  \Vert_{L^2(\Omega_{\eta_{0}})}^2
\big) 
\exp\bigg(
c
\int_I
\Vert\nabx \bu\Vert_{L^2(\Oeta)}^2 \dt\bigg) .
\end{aligned}
\end{equation}
\end{proposition}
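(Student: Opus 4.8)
The plan is to obtain the two $L^2$-bounds in \eqref{strgEstFP3} in turn, first testing \eqref{rhoEqu} with $\rho$ and then \eqref{solute} with $\bT$, integrating over the moving domain $\Oeta$. Since the solution is smooth, all integrations by parts and all uses of the Reynolds transport theorem are legitimate. The decisive structural point is that $\partial\Oeta$ moves with normal velocity $\bu\cdot\bn_\eta$ --- this is precisely the interface condition \eqref{interface} --- so that, for any scalar or tensor field $w$ advected and diffused according to \eqref{rhoEqu} or \eqref{solute}, the transport theorem applied to $\tfrac12|w|^2$ produces a boundary term $\int_{\partial\Oeta}\tfrac12|w|^2(\bu\cdot\bn_\eta)\,\dd S$ that is cancelled exactly by the boundary term arising when one integrates $\int_{\Oeta}w\,(\bu\cdot\nabx)w\,\dx$ by parts, once $\divx\bu=0$ from \eqref{divfree} is invoked; simultaneously, the Neumann conditions \eqref{bddCondSolv} annihilate the boundary term in $\int_{\Oeta}w\,\Delx w\,\dx$. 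Hence all boundary contributions drop out and one is left with clean energy identities.

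First I would carry out the $\rho$-estimate: multiplying \eqref{rhoEqu} by $\rho$, integrating over $\Oeta$ and using the three cancellations just described, one obtains the exact identity $\frac{1}{2}\frac{\dd}{\dt}\|\rho(t)\|_{L^2(\Oeta)}^2+\|\nabx\rho(t)\|_{L^2(\Oeta)}^2=0$; integrating in time,
\[
\|\rho(t)\|_{L^2(\Oeta)}^2+2\int_0^t\|\nabx\rho\|_{L^2(\Oeta)}^2\,\dd s=\|\rho_0\|_{L^2(\Omega_{\eta_0})}^2 ,
\]
so that $\sup_{t\in I}\|\rho(t)\|_{L^2(\Oeta)}^2+\int_I\|\rho\|_{W^{1,2}(\Oeta)}^2\,\dt\lesssim(1+T)\|\rho_0\|_{L^2(\Omega_{\eta_0})}^2$. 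This already controls the $\rho$-contribution to \eqref{strgEstFP3}, and I record for later use the bound $\int_0^t\|\rho\|_{L^2(\Oeta)}^2\,\dd s\le T\|\rho_0\|_{L^2(\Omega_{\eta_0})}^2$.

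Next I would do the $\bT$-estimate: taking the Frobenius product of \eqref{solute} with $\bT$ and integrating over $\Oeta$, the same boundary cancellations leave
\[
\frac{1}{2}\frac{\dd}{\dt}\|\bT\|_{L^2(\Oeta)}^2+\|\nabx\bT\|_{L^2(\Oeta)}^2=\int_{\Oeta}\bT:\big[(\nabx\bu)\bT+\bT(\nabx\bu)^\top\big]\dx-2\int_{\Oeta}|\bT|^2\dx+2\int_{\Oeta}\rho\,\tr(\bT)\dx .
\]
The last two terms are benign: since $|\tr(\bT)|\le\sqrt2\,|\bT|$ in two dimensions, $2\int_{\Oeta}\rho\,\tr(\bT)\dx\le\|\rho\|_{L^2(\Oeta)}^2+2\|\bT\|_{L^2(\Oeta)}^2$, which together with $-2\|\bT\|_{L^2(\Oeta)}^2$ contributes at most $\|\rho\|_{L^2(\Oeta)}^2$ to the right-hand side. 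The only genuinely nonlinear term is bounded pointwise by submultiplicativity of the Frobenius norm, $\big|\bT:[(\nabx\bu)\bT+\bT(\nabx\bu)^\top]\big|\le2|\nabx\bu||\bT|^2$, and then by Hölder together with the two-dimensional Ladyzhenskaya/Gagliardo--Nirenberg inequality $\|\bT\|_{L^4(\Oeta)}^2\lesssim\|\bT\|_{L^2(\Oeta)}\|\bT\|_{W^{1,2}(\Oeta)}$:
\[
2\int_{\Oeta}|\nabx\bu||\bT|^2\dx\lesssim\|\nabx\bu\|_{L^2(\Oeta)}\|\bT\|_{L^2(\Oeta)}\big(\|\bT\|_{L^2(\Oeta)}+\|\nabx\bT\|_{L^2(\Oeta)}\big)\le\tfrac12\|\nabx\bT\|_{L^2(\Oeta)}^2+c\big(1+\|\nabx\bu\|_{L^2(\Oeta)}^2\big)\|\bT\|_{L^2(\Oeta)}^2
\]
after a Young inequality; here the interpolation constant must be taken uniform in $t$, which is legitimate because the domains $\Oeta$ have a uniformly bounded Lipschitz character --- a consequence of $\|\eta\|_{L^\infty(I\times\omega)}<L$ and the uniform $W^{1,\infty}$-bound on $\eta$, equivalently of the Hanzawa estimates \eqref{210and212}--\eqref{218}. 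Absorbing $\tfrac12\|\nabx\bT\|_{L^2(\Oeta)}^2$ into the left-hand side yields
\[
\frac{1}{2}\frac{\dd}{\dt}\|\bT\|_{L^2(\Oeta)}^2+\frac{1}{2}\|\nabx\bT\|_{L^2(\Oeta)}^2\le c\big(1+\|\nabx\bu\|_{L^2(\Oeta)}^2\big)\|\bT\|_{L^2(\Oeta)}^2+\|\rho\|_{L^2(\Oeta)}^2 .
\]

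To conclude, I would apply Grönwall's lemma to this differential inequality, using the recorded bound $\int_0^t\|\rho\|_{L^2(\Oeta)}^2\,\dd s\le T\|\rho_0\|_{L^2(\Omega_{\eta_0})}^2$ for the inhomogeneity; since $\int_I 1\,\dt=T$, the resulting factor $e^{cT}$ is absorbed into the constant hidden in $\lesssim$, giving $\sup_{t\in I}\|\bT(t)\|_{L^2(\Oeta)}^2\lesssim\big(\|\bT_0\|_{L^2(\Omega_{\eta_0})}^2+T\|\rho_0\|_{L^2(\Omega_{\eta_0})}^2\big)\exp\big(c\int_I\|\nabx\bu\|_{L^2(\Oeta)}^2\,\dt\big)$. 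Integrating the differential inequality once more over $I$ then bounds $\int_I\|\nabx\bT\|_{L^2(\Oeta)}^2\,\dt$, hence $\int_I\|\bT\|_{W^{1,2}(\Oeta)}^2\,\dt$ after adding $\int_I\|\bT\|_{L^2(\Oeta)}^2\,\dt\le T\sup_{t\in I}\|\bT(t)\|_{L^2(\Oeta)}^2$, by the same right-hand side; combined with the $\rho$-bounds this is \eqref{strgEstFP3}. The one step that is not pure bookkeeping is the uniform-in-$t$ control of the Ladyzhenskaya constant over the moving family $\{\Oeta\}_{t\in I}$, which is where the uniform geometric bounds \eqref{210and212}--\eqref{218} on the Hanzawa transform enter; the boundary-term cancellations, by contrast, are the standard fluid-structure mechanism and are immediate from \eqref{divfree}, \eqref{bddCondSolv} and \eqref{interface}.
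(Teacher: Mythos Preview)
Your proposal is correct and follows essentially the same approach as the paper: test \eqref{rhoEqu} with $\rho$ and \eqref{solute} with $\bT$, use the Reynolds transport cancellations from \eqref{divfree}, \eqref{bddCondSolv}, \eqref{interface}, then control the nonlinear term $\int_{\Oeta}|\nabx\bu||\bT|^2\dx$ via the two-dimensional Ladyzhenskaya inequality and close with Gr\"onwall. You are in fact more explicit than the paper about why the boundary terms cancel and about the uniformity in $t$ of the interpolation constant on the moving domains, which are points the paper leaves implicit.
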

\begin{proof}
If we set $ \psi=\rho$ in \eqref{weakRhoEq}, we obtain
\begin{align*}  
\int_I  \frac{\mathrm{d}}{\dt}
\int_{\Oeta } \vert\rho\vert^2 \dx \dt 
&=
\int_I
\int_{\Oeta}[\rho\partial_t\rho + (\rho\bu\cdot\nabx) \rho] \dx\dt
-
\int_I\int_{\Oeta}\vert\nabx \rho \vert^2  \dx\dt.
\end{align*}
Thus, by Reynold's transport theorem, we obtain
\begin{equation}
\begin{aligned}  
\label{weakRhoEqx}
\frac{1}{2}\sup_{t\in I} 
\Vert\rho(t)\Vert_{L^2(\Oeta)}^2 
+
\int_I \Vert\nabx \rho \Vert_{L^2(\Oeta)}^2   \dt
=
\frac{1}{2}
\Vert\rho_0 \Vert_{L^2(\Omega_{\eta_0})}^2
\end{aligned}
\end{equation}
On the other hand, if we set $\mathbb{Y}=\bT$ in \eqref{weakFokkerPlanckEq}, we obtain  
\begin{align*}
\int_I  \frac{\mathrm{d}}{\dt}
\int_{\Oeta } \vert\bT\vert^2 \dx \dt 
&=
\int_I
\int_{\Omega_{\eta }}[\bT :\partial_t\bT + \bT:(\bu\cdot\nabx) \bT] \dx\dt
\\&+
\int_I\int_{\Omega_{\eta }}
[(\nabx \bu )\bT  + \bT (\nabx \bu )^\top]:\bT \dx\dt
\\&
-2\int_I\int_{\Omega_{\eta }}(\vert\bT\vert^2  - \rho  \mathrm{tr}(\bT ))\dx\dt
-
\int_I\int_{\Omega_{\eta }}\vert\nabx \bT \vert^2 \dx\dt.
\end{align*}
If we now use the estimate
\begin{align*}
\int_{\Oeta} \vert \mathrm{tr}(\bT)\vert^2\dx
\leq
\int_{\Oeta} \vert \bT\vert^2\dx
\end{align*}
and Reynold's transport theorem, we obtain
\begin{equation}
\begin{aligned}
\label{strgEstFP}
\frac{1}{2}\int_I\frac{\dd}{\dt}&\Vert\bT(t)\Vert_{L^2(\Oeta)}^2\dt
+
\int_I\Vert  \bT \Vert_{W^{1,2}(\Oeta)}^2\dt
\\&\leq
2\int_I\int_{\Oeta} \vert\nabx \bu\vert\vert\bT\vert^2\dx \dt
+
\int_I\Vert\rho\Vert_{L^2(\Oeta)}^2 \dt
\\&\leq
2\int_I\int_{\Oeta} \vert\nabx \bu\vert\vert\bT\vert^2\dx \dt
+
T\Vert\rho_0\Vert_{L^2(\Omega_{\eta_0})}^2 
\end{aligned}
\end{equation}
where the second inequality follows from \eqref{weakRhoEqx}. We now note that 
\begin{align*}
2\int_{\Oeta} \vert\nabx \bu\vert\vert\bT\vert^2\dx
&\lesssim
\Vert\nabx \bu\Vert_{L^2(\Oeta)} \Vert\bT\Vert_{L^4(\Oeta)}^2
\\
&\lesssim
\Vert\nabx \bu\Vert_{L^2(\Oeta)} \Vert\bT\Vert_{L^2(\Oeta)}\Vert  \bT \Vert_{W^{1,2}(\Oeta)}
\\
&\leq
\delta
\Vert  \bT \Vert_{W^{1,2}(\Oeta)}
+
c
\Vert\nabx \bu\Vert_{L^2(\Oeta)}^2 \Vert\bT\Vert_{L^2(\Oeta)}^2
\end{align*}
holds for any $\delta>0$.
Consequently, 
%by integrating \eqref{strgEstFP} in time and using \eqref{weakRhoEqx}, we obtain
%\begin{equation}
%\begin{aligned}
%\label{strgEstFP1}
%\Vert\bT(t)\Vert_{L^2(\Ozeta)}^2
%&+
%\int_0^t\Vert\nabx \bT \Vert_{L^2(\Ozeta)}^2\dt'
%+
%2\int_0^t \Vert\bT\Vert_{L^2(\Ozeta)}^2\dt'
%\\&
%\leq
%\Vert\bT_0\Vert_{L^2(\Ozeta)}^2
%+
%2T \Vert\rho_0  \Vert_{L^2(\Omega_{\eta_{0}})}^2
%+
%c
%\int_0^t
%\Vert\nabx \bu\Vert_{L^2(\Ozeta)}^2 \Vert\bT\Vert_{L^2(\Ozeta)}^2\dt'
%\end{aligned}
%\end{equation}
%for all $t\in I$. Since all left-hand terms are non-negative and the initial terms on the right are non-decreasing in time, 
it follows from Gr\"onwall's lemma that
%\begin{equation}
%\begin{aligned}
%\label{strgEstFP2}
%\Vert\bT(t)\Vert_{L^2(\Ozeta)}^2
%&
%\leq 
%\Big(
%\Vert\bT_0\Vert_{L^2(\Ozeta)}^2
%+
%2T \Vert\rho_0  \Vert_{L^2(\Omega_{\eta_{0}})}^2
%\Big)
%\exp\bigg(
%c
%\int_I
%\Vert\nabx \bu\Vert_{L^2(\Ozeta)}^2 \dt\bigg)
%%\\&
%%\leq
%%\Big(
%%\Vert\bT_0\Vert_{L^2(\Ozeta)}^2
%%+
%%2k \Vert\rho_0  \Vert_{L^2(\Omega_{\eta_{0}})}^2
%%\Big)
%%\exp\bigg(
%%\frac{c}{\varepsilon\nu}
%%\mathcal{E}_1(\bT_0,\rho_0,\rho_1,\bu_0,\eta_0,\bff,g)\bigg)
%%\\&
%%=:\mathcal{E}_1^*(\bT_0,\rho_0,\rho_1,\bu_0,\eta_0,\bff,g)
%\end{aligned}
%\end{equation}
%holds independently of $t\in I$. By inserting
%\eqref{strgEstFP2}
%into the last right-hand term in \eqref{strgEstFP1}, we obtain 
\begin{equation}
\begin{aligned} 
\sup_{t\in I}&
\Vert\bT(t)\Vert_{L^2(\Oeta)}^2
+\int_I\Vert  \bT \Vert_{W^{1,2}(\Oeta)}^2\dt 
\\&
\lesssim
\big(
\Vert\bT_0\Vert_{L^2(\Omega_{\eta_0})}^2
+
T \Vert\rho_0  \Vert_{L^2(\Omega_{\eta_{0}})}^2
\big) 
\exp\bigg(
c
\int_I
\Vert\nabx \bu\Vert_{L^2(\Oeta)}^2 \dt\bigg).
\end{aligned}
\end{equation}
Combining everything finishes the proof.
\end{proof} 
We now present a result that will be satisfied by strong solutions.
\begin{proposition}
\label{prop:strongEst}
Any smooth solution $(\eta, \bu, p,\rho,\bT)$ 
 of   \eqref{divfree}--\eqref{interface} with smooth dataset $(\bff, g, \rho_0, \bT_0, \bu_0, \eta_0, \eta_\star)$ satisfies
 \begin{equation}
\begin{aligned} 
\int_I&\big(\Vert \partial_t \rho\Vert_{L^2(\Oeta)}^2
+
\Vert \partial_t \bT\Vert_{L^2(\Oeta)}^2
+
\Vert \partial_t \bu\Vert_{L^2(\Oeta)}^2
+
\Vert \partial_t^2\eta\Vert_{L^{2}(\omega)}^2
\big)\dt
\\&+
\sup_{t\in I} \Big(\Vert \rho(t)\Vert_{W^{1,2}(\Oeta)}^2
+
\Vert \bT(t)\Vert_{W^{1,2}(\Oeta)}^2
+
\Vert \bu(t)\Vert_{W^{1,2}(\Oeta)}^2
+
\Vert \partial_t\eta(t)\Vert_{W^{1,2}(\omega)}^2\Big)
\\&+\sup_{t\in I}
\Vert  \eta(t)\Vert_{W^{3,2}(\omega)}^2
+\int_I\big(\Vert  \rho\Vert_{W^{2,2}(\Oeta)}
+
\Vert  \bT\Vert_{W^{2,2}(\Oeta)}
+
\Vert  \bu\Vert_{W^{2,2}(\Oeta)}\big)\dt
\\&+\int_I\big(
\Vert  \nabx p\Vert_{L^{2}(\Oeta)}
+
\Vert \partial_t\eta\Vert_{W^{2,2}(\omega)}^2
+
\Vert \eta\Vert_{W^{4,2}(\omega)}^2
\big)\dt
\\&\lesssim
\Vert  \rho_0\Vert_{W^{1,2}(\Omega_{\eta_0})}
+
\Vert  \bT_0\Vert_{W^{1,2}(\Omega_{\eta_0})}
+
\Vert  \bu_0\Vert_{W^{1,2}(\Omega_{\eta_0})}
+
 \Vert\eta_\star\Vert_{W^{1,2}(\omega)}^2 
+ 
\Vert\eta_0\Vert_{W^{3,2}(\omega)}^2
\\&
+
\int_I\big(\Vert \bff\Vert_{L^2(\Oeta)}^2
+
\Vert g\Vert_{L^{2}(\omega)}^2\big)\dt
+
T
\big(
\Vert\bT_0\Vert_{L^2(\Omega_{\eta_0})}^2
+
T \Vert\rho_0  \Vert_{L^2(\Omega_{\eta_0})}^2
\big)
\exp(
c
h_0).
\label{eq:thm:mainFP00}
\end{aligned}
\end{equation} 
where $h_0$ is the right-hand side of \eqref{apriori1}.
 \end{proposition}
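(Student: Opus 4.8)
The estimate is the higher-order analogue of \eqref{apriori1}--\eqref{strgEstFP3} and is obtained by a bootstrap coupling two ingredients: (i) steady elliptic/Stokes regularity and maximal parabolic regularity that trade spatial $W^{2,2}$/$W^{4,2}$-norms for $L^2$-norms of time derivatives, and (ii) a higher-order energy identity obtained by testing the equations with the time derivatives of the unknowns. The whole estimate is then closed by Gr\"onwall's lemma, the exponential weight being legitimate because $\int_I\|\nabx\bu\|_{L^2(\Oeta)}^2\dt\le h_0$ by \eqref{apriori1}; the terms built from $\rho_0,\bT_0$ enter through \eqref{strgEstFP3}. Throughout, one uses that $\eta\in W^{3,2}(\omega)$ embeds into $C^2(\omega)$ in one space dimension, so $\Oeta$ is a $C^2$-domain and the Stokes/elliptic estimates on it (with constants controlled via \eqref{210and212}--\eqref{218}) are available.

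\emph{Regularity transfer.} Reading \eqref{rhoEqu} and \eqref{solute} as Neumann problems $-\Delx\rho=-\partial_t\rho-(\bu\cdot\nabx)\rho$ and $-\Delx\bT=-\partial_t\bT-(\bu\cdot\nabx)\bT+(\nabx\bu)\bT+\bT(\nabx\bu)^\top-2(\bT-\rho\mathbb I)$ with the conormal condition \eqref{bddCondSolv}, elliptic regularity controls $\|\rho\|_{W^{2,2}(\Oeta)}$ and $\|\bT\|_{W^{2,2}(\Oeta)}$ by $\|\partial_t\rho\|_{L^2}$, $\|\partial_t\bT\|_{L^2}$, lower-order norms, and quadratic terms which, in two dimensions, are absorbed via Ladyzhenskaya-type inequalities such as $\|(\bu\cdot\nabx)\rho\|_{L^2}\lesssim\|\bu\|_{L^2}^{1/2}\|\nabx\bu\|_{L^2}^{1/2}\|\rho\|_{W^{1,2}}^{1/2}\|\rho\|_{W^{2,2}}^{1/2}$ and the matrix analogue for $(\nabx\bu)\bT$. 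Reading \eqref{momEq} as a steady Stokes system for $(\bu,p)$ with boundary datum $(\partial_t\eta)\bn\circ\bm{\varphi}_\eta^{-1}$ controls $\|\bu\|_{W^{2,2}(\Oeta)}+\|\nabx p\|_{L^2(\Oeta)}$ by $\|\partial_t\bu\|_{L^2}$, the convective term (absorbed as above), $\|\bT\|_{W^{1,2}}$, $\|\bff\|_{L^2}$ and $\|\partial_t\eta\|_{W^{3/2,2}(\omega)}$. Finally, applying maximal parabolic regularity to the damped beam operator $\varrho_s\partial_t^2-\gamma\partial_t\partial_y^2+\alpha\partial_y^4$ in \eqref{shellEQ} (which is parabolic under the scaling $\partial_t\sim\partial_y^2$) controls $\|\eta\|_{L^2(I;W^{4,2}(\omega))}+\|\partial_t\eta\|_{L^2(I;W^{2,2}(\omega))}+\|\partial_t^2\eta\|_{L^2(I;L^2(\omega))}$, together with the continuities $\eta\in C(\overline I;W^{3,2}(\omega))$ and $\partial_t\eta\in C(\overline I;W^{1,2}(\omega))$, by $\|\eta_0\|_{W^{3,2}}+\|\eta_\star\|_{W^{1,2}}+\|g\|_{L^2(I\times\omega)}$ and the boundary traction $\|(\mathbb S\bn_\eta)\circ\bm{\varphi}_\eta\cdot\bn\|_{L^2(I\times\omega)}$, the latter bounded by the trace theorem through $\|\bu\|_{L^2(I;W^{2,2}(\Oeta))}+\|p\|_{L^2(I;W^{1,2}(\Oeta))}+\|\bT\|_{L^2(I;W^{1,2}(\Oeta))}$.

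\emph{Higher-order energy identity.} I test \eqref{rhoEqu} and \eqref{solute} with $\partial_t\rho$ and $\partial_t\bT$, and I test \eqref{momEq} with $\partial_t\bu$ and \eqref{shellEQ} with $\partial_t^2\eta$. Using the Reynolds transport theorem on $\Oeta$---whose boundary moves with velocity $\bu$ by the kinematic coupling \eqref{interface}---together with \eqref{bddCondSolv}, the first two tests produce $\|\partial_t\rho\|_{L^2}^2+\tfrac{\mathrm d}{\mathrm dt}\|\nabx\rho\|_{L^2}^2$ and $\|\partial_t\bT\|_{L^2}^2+\tfrac{\mathrm d}{\mathrm dt}\|\nabx\bT\|_{L^2}^2$ on the left, controlled on the right by transport, stretching and reaction terms plus the boundary corrections $\tfrac12\int_{\partial\Oeta}|\nabx\rho|^2(\bu\cdot\bn_\eta)$ and its $\bT$-analogue; the fluid--structure pair produces $\|\partial_t\bu\|_{L^2}^2+\|\partial_t^2\eta\|_{L^2(\omega)}^2+\tfrac{\mathrm d}{\mathrm dt}\big(\|\nabx\bu\|_{L^2}^2+\|\partial_t\partial_y\eta\|_{L^2(\omega)}^2+\|\partial_y^2\eta\|_{L^2(\omega)}^2\big)$ once the interface stress terms cancel exactly as in the derivation of \eqref{apriori1}---the only subtlety being that differentiating \eqref{interface} in time gives $\partial_t\bu\circ\bm{\varphi}_\eta=(\partial_t^2\eta)\bn-(\nabx\bu\circ\bm{\varphi}_\eta)\partial_t\bm{\varphi}_\eta$, so the cancellation leaves the lower-order boundary defect $\int_{\partial\Oeta}(\mathbb S\bn_\eta)\cdot\big[(\nabx\bu)(\partial_t\bm{\varphi}_\eta)\circ\bm{\varphi}_\eta^{-1}\big]$, which is estimated by trace interpolation. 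Every convective, stretching, reaction and boundary term is now estimated, using the two-dimensional multiplicative inequalities together with the regularity transfer of the previous step, in the form $\delta\big(\|\rho\|_{W^{2,2}}^2+\|\bT\|_{W^{2,2}}^2+\|\bu\|_{W^{2,2}}^2+\|\nabx p\|_{L^2}^2\big)+c\big(1+\|\nabx\bu\|_{L^2(\Oeta)}^2\big)E$ with $E:=\|\nabx\rho\|_{L^2}^2+\|\nabx\bT\|_{L^2}^2+\|\nabx\bu\|_{L^2}^2+\|\partial_t\partial_y\eta\|_{L^2(\omega)}^2+\|\partial_y^2\eta\|_{L^2(\omega)}^2+\|\partial_t\eta\|_{L^2(\omega)}^2$. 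Summing the three identities, absorbing the $\delta$-terms through the regularity transfer, discarding the sign-definite absorbed boundary corrections, and adding the zeroth-order contribution to $\|\bT\|_{L^2}$, $\|\rho\|_{L^2}$ quantified by \eqref{strgEstFP3}, leads to $\tfrac{\mathrm d}{\mathrm dt}E+D\lesssim\big(1+\|\nabx\bu\|_{L^2(\Oeta)}^2\big)E+\|\bff\|_{L^2(\Oeta)}^2+\|g\|_{L^2(\omega)}^2+R_{\rho,\bT}$, where $D$ collects $\|\partial_t\rho\|_{L^2}^2+\|\partial_t\bT\|_{L^2}^2+\|\partial_t\bu\|_{L^2}^2+\|\partial_t^2\eta\|_{L^2(\omega)}^2+\|\rho\|_{W^{2,2}}^2+\|\bT\|_{W^{2,2}}^2+\|\bu\|_{W^{2,2}}^2+\|\nabx p\|_{L^2}^2+\|\partial_t\eta\|_{W^{2,2}(\omega)}^2+\|\eta\|_{W^{4,2}(\omega)}^2$ and $R_{\rho,\bT}$ is the right-hand side of \eqref{strgEstFP3}. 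Gr\"onwall's lemma, using $\int_I\|\nabx\bu\|_{L^2(\Oeta)}^2\dt\le h_0$, then yields $\sup_I E+\int_I D\lesssim\big(E(0)+\int_I\|\bff\|_{L^2(\Oeta)}^2\dt+\int_I\|g\|_{L^2(\omega)}^2\dt+R_{\rho,\bT}\big)\exp(c\,h_0)$; the remaining suprema in \eqref{eq:thm:mainFP00} for $\rho,\bT,\bu$ in $W^{1,2}$ follow from $L^2(I;W^{2,2})\cap W^{1,2}(I;L^2)\hookrightarrow C(\overline I;W^{1,2})$, those for $\eta$ and $\partial_t\eta$ were already provided by the beam maximal regularity, and the first-power time integrals in \eqref{eq:thm:mainFP00} follow from the squared bounds by Cauchy--Schwarz in time. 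Collecting everything gives \eqref{eq:thm:mainFP00}.

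\emph{Main obstacle.} The hard point is the circular coupling between the two steps: the $W^{2,2}$/$W^{4,2}$-regularity is precisely what makes the nonlinear and interface terms in the energy identity controllable, yet it is itself quantified only through the time-derivative norms that this identity produces, so the two must be estimated simultaneously, keeping all top-order terms with a small constant and absorbing them at once. This closure is possible only because the flow is two-dimensional---so the convective and stretching terms are $L^2$-critical and are tamed by $W^{1,2}$--$W^{2,2}$ interpolation---and because \eqref{rhoEqu}, \eqref{solute} carry the center-of-mass diffusions $\Delx\rho$, $\Delx\bT$ that furnish the needed second-order control. The secondary, purely technical, difficulty is the moving-domain bookkeeping: every integration by parts in time or space on $\Oeta$ generates boundary terms (Reynolds corrections, non-commutation of $\partial_t$ with the time-dependent domain), the differentiated kinematic condition does not exactly match the test pair $(\partial_t\bu,\partial_t^2\eta)$, and the Stokes estimate requires the boundary datum $\partial_t\eta$ in $W^{3/2,2}(\omega)$; all of these must be shown to be of strictly lower order and absorbed, using the trace theorem on the $W^{3,2}$-regular boundary together with sharp interpolation.
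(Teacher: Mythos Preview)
Your outline is broadly sound, but the paper organises the proof quite differently and, in doing so, bypasses precisely what you flag as the ``main obstacle''. The paper does \emph{not} close a simultaneous higher-order estimate for the whole system. Instead it first invokes the acceleration estimate \eqref{acceleration} for the solvent--structure subsystem (imported from \cite{breit2022regularity,BMSS} as a black box), whose right-hand side contains only $\int_I\|\nabx\bT\|_{L^2(\Oeta)}^2\dt$ from the solute side; this quantity is \emph{already} controlled at the weak-solution level by \eqref{strgEstFP3}. Hence the full $W^{2,2}$--regularity of $\bu$, $p$, and the $W^{2,2}/W^{4,2}$-in-time/space regularity of $\eta$ are obtained \emph{first}, with no feedback from higher-order solute norms. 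Only then does the paper test \eqref{rhoEqu} and \eqref{solute} with $\Delx\rho$, $\Delx\bT$, $\partial_t\rho$, $\partial_t\bT$; the resulting right-hand sides involve $\int_I(\|\partial_t\eta\|_{W^{2,2}}^2+\|\bu\|_{W^{2,2}}^2)\dt$, which is now data. So the coupling is sequential, not circular, and no simultaneous absorption step is needed.

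Your route---testing \eqref{momEq} with $\partial_t\bu$ and \eqref{shellEQ} with $\partial_t^2\eta$ and closing everything at once---amounts to re-deriving the acceleration estimate inside the argument. This can be made to work, but it is considerably more delicate than you indicate: the pairing $(\partial_t\bu,\partial_t^2\eta)$ does not produce a clean sign for $\int_\omega\partial_y^4\eta\,\partial_t^2\eta\,\dy$, and the actual proof in \cite{breit2022regularity} uses a more intricate combination of test functions together with the steady Stokes theory on irregular domains. Your ``maximal parabolic regularity'' for the damped beam is also not standard in the form you state it. None of this is fatal, but it means the part you treat as routine bookkeeping is in fact the substantive analytic input, while the ``circular coupling'' you worry about dissolves once one notices that the acceleration estimate needs only $\nabx\bT\in L^2(I;L^2)$, not $\bT\in L^2(I;W^{2,2})$.
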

 \begin{proof}
 We begin with the following acceleration estimate for the solvent-structure subsystem \eqref{divfree} and \eqref{momEq}-\eqref{shellEQ} (see \cite[(5.4)]{breit2022regularity} and \cite[(4.5)]{BMSS}) 
\begin{equation}
\begin{aligned}
\label{acceleration}
\sup_{t\in I}&
\bigg( 
\Vert \nabx\bu(t)\Vert_{L^2(\Oeta)}^2 
+
 \Vert\partial_t \partial_y\eta(t)\Vert_{L^2(\omega)}^2 
+ 
\Vert\partial_y^3\eta(t)\Vert_{L^2(\omega)}^2
\bigg)
\\&\qquad+
\int_I \big(
\Vert \nabx^2 \bu \Vert_{L^2(\Oeta)}^2
+
\Vert \partial_t \bu \Vert_{L^2(\Oeta)}^2
+
\Vert \nabx p \Vert_{L^2(\Oeta)}^2\big)\dt
\\&\qquad+
\int_I \big(
\Vert\partial_t\partial_y^2 \eta \Vert_{L^2(\omega)}^2
+
\Vert\partial_t^2 \eta \Vert_{L^2(\omega)}^2
\big)\dt
\\&\lesssim 
\Vert \nabx \bu_0\Vert_{L^2(\Omega_{\eta_{0}})}^2 
+
 \Vert \partial_y\eta_\star\Vert_{L^2(\omega)}^2 
+ 
\Vert\partial_y^3\eta_0\Vert_{L^2(\omega)}^2
\\&\qquad
+
\int_I\Vert \bff\Vert_{L^2(\Oeta)}^2\dt
+
\int_I\Vert \nabx \bT\Vert_{L^2(\Oeta)}^2\dt
+
\int_I\Vert g\Vert_{L^2(\omega)}^2\dt.
\end{aligned}
\end{equation}
The constant in the bound depends only on the right-hand side of \eqref{apriori1} and the estimate is the key  ingredient in showing the global-in-time existence of strong solutions for the solvent-structure subsystem \eqref{divfree} and \eqref{momEq}-\eqref{shellEQ} satisfying $\Vert\eta\Vert_{L^\infty(I\times\omega)}<L$. The original estimate \cite[(5.4)]{breit2022regularity} required the $L^2$-norm in time of $\Vert \partial_yg\Vert_{L^2(\omega)}^2$  on the right-hand side of \eqref{acceleration}. The fact that  $\Vert g\Vert_{L^2(\omega)}^2$ (rather than $\Vert \partial_yg\Vert_{L^2(\omega)}^2$) is sufficient is shown in \cite[(4.5)]{BMSS}.
We also note that by \eqref{strgEstFP3}  
\begin{equation}
\begin{aligned}
\int_I\Vert \nabx \bT\Vert_{L^2(\Oeta)}^2\dt
\lesssim
\Vert\rho_0 \Vert_{L^2(\Omega_{\eta_0})}^2
+
\big(
\Vert\bT_0\Vert_{L^2(\Omega_{\eta_0})}^2
+
T \Vert\rho_0  \Vert_{L^2(\Omega_{\eta_{0}})}^2
\big) 
e^{
c
h_0}
\end{aligned}
\end{equation}
where
\begin{equation}
\begin{aligned}
\label{h0}
h_0&:=\int_{\Omega_{\eta_{0}}}\mathrm{tr}(\bT_0)\dx
+
\Vert \bu_0\Vert_{L^2(\Omega_{\eta_{0}})}^2 
+
 \Vert\eta_\star\Vert_{L^2(\omega)}^2 
+ 
\Vert\partial_y^2\eta_0\Vert_{L^2(\omega)}^2
\\&+
T
\int_{\Omega_{\eta_{0}}}\rho_0 \dx
+
\int_I\Vert \bff\Vert_{L^2(\Oeta)}^2\dt
+
\int_I\Vert g\Vert_{L^2(\omega)}^2\dt.
\end{aligned}
\end{equation}
%
%To begin with, we note that just as in \eqref{strgEstFP3}, 
%\begin{equation}
%\begin{aligned}
%\label{strgEstFP3x}
%\sup_{t\in I}&
%\big(\Vert\rho(t)\Vert_{L^2(\Oeta)}^2
%+
%\Vert\bT(t)\Vert_{L^2(\Oeta)}^2
%\big)
%+
%\int_I\big(\Vert\nabx \rho \Vert_{L^2(\Oeta)}^2
%+
%\Vert\nabx \bT \Vert_{L^2(\Oeta)}^2\big)\dt
%+
%\int_I \Vert\bT\Vert_{L^2(\Omega_{\eta})}^2\dt 
%\\&
%\lesssim
%\Vert\rho_0 \Vert_{L^2(\Omega_{\eta_0})}^2
%+
%\big(
%\Vert\bT_0\Vert_{L^2(\Omega_{\eta_0})}^2
%+
%T \Vert\rho_0  \Vert_{L^2(\Omega_{\eta_0})}^2
%\big)
%\exp\bigg(
%c
%\int_I
%\Vert\nabx \bu\Vert_{L^2(\Oeta)}^2 \dt\bigg).
%\end{aligned}
%\end{equation}
We now test \eqref{rhoEqu}  with $\Delx \rho$. This yields
\begin{equation}
\begin{aligned}  
\label{spaceRegRho0}
\int_I  \frac{\mathrm{d}}{\dt}
\Vert\nabx \rho\Vert_{L^2(\Oeta)}^2 \dt 
&+
\int_I\Vert\Delx \rho\Vert_{L^2(\Oeta)}^2\dt
\\&=
\int_I
\int_{\Oeta}((\bu\cdot \nabx) \rho)\Delx\rho \dx\dt
\\&\quad+
\frac{1}{2}
\int_I
\int_{\partial\Oeta}(\partial_t\eta\bn)\circ\bm{\varphi}_\eta^{-1}\cdot\bn_\eta \vert\nabx\rho\vert^2 \dd\mathcal{H}^1\dt
\\&=:I_1+I_2
\end{aligned}
\end{equation}
where
\begin{align*}
I_1\leq \delta
\int_I\Vert\Delx \rho\Vert_{L^2(\Oeta)}^2\dt
+
c(\delta)
\int_I\Vert\bu\Vert_{W^{2,2}(\Oeta)}^2\Vert\nabx \rho \Vert_{L^2(\Oeta)}^2  \dt
\end{align*}
for any $\delta>0$, and by using $\eta\in L^\infty(I;W^{1,\infty}(\omega))$ and $\partial_t\eta\in L^\infty(I;W^{1,2}(\omega))$ (which follows from \eqref{acceleration}), we also obtain 
\begin{align*}
I_2&\lesssim
\int_I\Vert\nabx \rho \Vert_{L^2(\partial\Oeta)}^2\Vert(\partial_t\eta\bn)\circ\bm{\varphi}_\eta^{-1}\cdot\bn_\eta  \Vert_{L^\infty(\partial\Oeta)} \dt
\\
&\lesssim
\int_I\Vert  \rho \Vert_{W^{2,2}(\Oeta)}  \Vert\nabx \rho \Vert_{W^{3/4,2}(\Oeta)}
\Vert\naby\eta  \Vert_{L^{\infty}(\omega)}
\Vert\partial_t\eta  \Vert_{W^{5/4,2}(\omega)} \dt
\\
&\lesssim 
\int_I\Vert  \rho \Vert_{W^{2,2}(\Oeta)}^{7/4}  \Vert\nabx \rho \Vert_{L^{2}(\Oeta)}^{1/4}
\Vert\partial_t\eta  \Vert_{W^{1,2}(\omega)} ^{3/4}
\Vert\partial_t\eta  \Vert_{W^{2,2}(\omega)}^{1/4}  \dt
\\
&\leq
\delta
\int_I\Vert  \rho \Vert_{W^{2,2}(\Oeta)}^2+c(\delta)\int_I  \Vert\nabx \rho \Vert_{L^{2}(\Oeta)}^2\Vert\partial_t\eta  \Vert_{W^{2,2}(\omega)}^2  \dt
\end{align*}
or any $\delta>0$.
Consequently, it follows from Gr\"onwall's lemma that
\begin{equation}
\begin{aligned}
\label{spaceRegRho1}
\sup_{t\in I}\Vert \nabx\rho(t)\Vert_{L^2(\Oeta)}&+\int_I\Vert \Delx\rho\Vert_{L^2(\Oeta)}\dt
\\&\lesssim
\Vert \nabx\rho_0\Vert_{L^2(\Omega_{\eta_0})}
+
\int_I\big(\Vert\partial_t\eta  \Vert_{W^{2,2}(\omega)}^2+\Vert\bu\Vert_{W^{2,2}(\Oeta)}^2\big)\dt.
\end{aligned}
\end{equation}
If we also test \eqref{solute} with $\Delx \bT$, we obtain 
\begin{equation}
\begin{aligned}  
\label{spaceRegT0}
\int_I  \frac{\mathrm{d}}{\dt}
\Vert\nabx \bT\Vert_{L^2(\Oeta)}^2\dt 
&+
\int_I\Vert\Delx \bT\Vert_{L^2(\Oeta)}^2\dt
\\=&
\int_I
\int_{\Oeta}((\bu\cdot \nabx) \bT)\Delx\bT \dx\dt
\\&+
\frac{1}{2}
\int_I
\int_{\partial\Oeta}(\partial_t\eta\bn)\circ\bm{\varphi}_\eta^{-1}\cdot\bn_\eta \vert\nabx\bT\vert^2 \dd\mathcal{H}^1\dt
\\&+
2\int_I
\int_{\Oeta}(\bT-\rho\mathbb{I})\Delx\bT \dx\dt
\\&
-\int_I
\int_{\Oeta}((\nabx \bu)\bT + \bT(\nabx \bu)^\top)\Delx\bT \dx\dt
\\=:&J_1+J_2+J_3+J_4.
\end{aligned}
\end{equation}
The terms $J_1$ and $J_2$ can be treated as $I_1$ and $I_2$ above. By using \eqref{strgEstFP3}, we obtain
\begin{align*}
J_3\leq &\delta
\int_I\Vert\Delx \bT\Vert_{L^2(\Oeta)}^2\dt
+
cT
\big(
\Vert\bT_0\Vert_{L^2(\Omega_{\eta_0})}^2
+
 T \Vert\rho_0  \Vert_{L^2(\Omega_{\eta_0})}^2
\big)
\exp\bigg(
c
\int_I
\Vert\nabx \bu\Vert_{L^2(\Oeta)}^2 \dt\bigg)
\end{align*}
for any $\delta>0$ and by Ladyzhenskaya's inequality and  \eqref{strgEstFP3},
\begin{align*}
J_4&\lesssim
\int_I  
\Vert\nabx \bu\Vert_{L^2(\Oeta)}^{1/2}\Vert \bu\Vert_{W^{2,2}(\Oeta)}^{1/2}\Vert \bT\Vert_{L^2(\Oeta)}^{1/2}\Vert\nabx \bT\Vert_{L^2(\Oeta)}^{1/2}\Vert\Delx \bT\Vert_{L^2(\Oeta)}
\dt
\\
\leq &\delta
\int_I\Vert\Delx \bT\Vert_{L^2(\Oeta)}^2\dt
+
c\int_I \Vert \bu\Vert_{W^{2,2}(\Oeta)}^2\Vert  \bT \Vert_{W^{1,2}(\Oeta)}^2
\dt
\\&+
cT
\big(
\Vert\bT_0\Vert_{L^2(\Omega_{\eta_0})}^2
+
 T \Vert\rho_0  \Vert_{L^2(\Omega_{\eta_0})}^2
\big)
\exp\bigg(
c
\int_I
\Vert\nabx \bu\Vert_{L^2(\Oeta)}^2 \dt\bigg).
\end{align*}
Thus, it follows from Gr\"onwall's lemma that
\begin{equation}
\begin{aligned}
\label{spaceRegT1}
\sup_{t\in I}&\Vert  \bT(t)\Vert_{W^{1,2}(\Oeta)}^2
+
\int_I\Vert \Delx\bT\Vert_{L^2(\Oeta)}^2\dt
\\&\lesssim
\Vert \nabx\bT_0\Vert_{L^2(\Omega_{\eta_0})}^2
+
\int_I\big(\Vert\partial_t\eta  \Vert_{W^{2,2}(\omega)}^2+\Vert\bu\Vert_{W^{2,2}(\Oeta)}^2\big)\dt
\\&
+
T
\big(
\Vert\bT_0\Vert_{L^2(\Omega_{\eta_0})}^2
+
T \Vert\rho_0  \Vert_{L^2(\Omega_{\eta_0})}^2
\big)
\exp\bigg(
c
\int_I
\Vert\nabx \bu\Vert_{L^2(\Oeta)}^2 \dt\bigg).
\end{aligned}
\end{equation}
To obtain regularity in time, we test \eqref{rhoEqu}  with $\partial_t \rho$. This yields
\begin{equation}
\begin{aligned}
\label{timeRegRho0}
\int_I\Vert \partial_t \rho\Vert_{L^2(\Oeta)}^2\dt
&+
\int_I\frac{\dd}{\dt}\Vert \nabx \rho\Vert_{L^2(\Oeta)}^2\dt
\\&=
\frac{1}{2}
\int_I
\int_{\partial\Oeta}(\partial_t\eta\bn)\circ\bm{\varphi}_\eta^{-1}\cdot\bn_\eta \vert\nabx\rho\vert^2 \dd\mathcal{H}^1\dt
\\&
-\int_I\int_{\Oeta}(\bu\cdot \nabx) \rho\partial_t\rho\dx\dt
\\
&\leq
c
\int_I\Vert  \rho \Vert_{W^{2,2}(\Oeta)}^2
+
c\int_I  \Vert\nabx \rho \Vert_{L^{2}(\Oeta)}^2\Vert\partial_t\eta  \Vert_{W^{2,2}(\omega)}^2  \dt
\\
&+c(\delta)\int_I \Vert\nabx \rho \Vert_{L^{2}(\Oeta)}^2 \Vert\bu \Vert_{W^{2,2}(\Oeta)}^2  \dt
+
\delta
\int_I\Vert  \partial_t\rho \Vert_{L^{2}(\Oeta)}^2
\end{aligned}
\end{equation}
for any $\delta>0$. Note the estimate for the boundary term done earlier in \eqref{spaceRegRho0}.
By using \eqref{spaceRegRho1}, it follows from \eqref{timeRegRho0} that
\begin{equation}
\begin{aligned}
\label{timeRegRho1}
\int_I\Vert \partial_t \rho\Vert_{L^2(\Oeta)}^2\dt
&+
\sup_{t\in I} \Vert \nabx \rho(t)\Vert_{L^2(\Oeta)}^2
\\&\lesssim
\Vert \nabx\rho_0\Vert_{L^2(\Omega_{\eta_0})}
+
\int_I\big(\Vert\partial_t\eta  \Vert_{W^{2,2}(\omega)}^2+\Vert\bu\Vert_{W^{2,2}(\Oeta)}^2\big)\dt.
\end{aligned}
\end{equation}
Now, we note that  (compare with the estimate for $J_3$ in \eqref{spaceRegT0})
\begin{align*}
2k\int_I\int_{\Oeta}(\bT - \rho \mathbb{I}):\partial_t\bT\dx\dt
&\leq \delta
\int_I\Vert\partial_t \bT\Vert_{L^2(\Oeta)}^2\dt
+
cT
\big(
\Vert\bT_0\Vert_{L^2(\Omega_{\eta_0})}^2
+
T \Vert\rho_0  \Vert_{L^2(\Omega_{\eta_0})}^2
\big)
\\&\times
\exp\bigg(
c
\int_I
\Vert\nabx \bu\Vert_{L^2(\Oeta)}^2 \dt\bigg)
\end{align*}
and   (compare with the estimate for $J_4$ in \eqref{spaceRegT0})
\begin{align*}
\int_I\int_{\Oeta}
[(\nabx \bu)\bT &+ \bT(\nabx \bu)^\top]:\partial_t\bT\dx\dt
\\&\leq \delta
\int_I\Vert\partial_t \bT\Vert_{L^2(\Oeta)}^2\dt
+
c\int_I \Vert \bu\Vert_{W^{2,2}(\Oeta)}^2\Vert\nabx \bT\Vert_{L^2(\Oeta)}^2
\dt
\\&+
cT
\big(
\Vert\bT_0\Vert_{L^2(\Omega_{\eta_0})}^2
+
T \Vert\rho_0  \Vert_{L^2(\Omega_{\eta_0})}^2
\big) 
\exp\bigg(
c
\int_I
\Vert\nabx \bu\Vert_{L^2(\Oeta)}^2 \dt\bigg).
\end{align*}
Therefore, by testing \eqref{solute} with $\partial_t\bT$, we obtain
\begin{equation}
\begin{aligned}
\label{timeRegT1}
\int_I\Vert \partial_t \bT&\Vert_{L^2(\Oeta)}^2\dt
+
\sup_{t\in I} \Vert \nabx \bT(t)\Vert_{L^2(\Oeta)}^2
\\
\lesssim&
\Vert \nabx\bT_0\Vert_{L^2(\Omega_{\eta_0})}
+
\int_I\big(\Vert\partial_t\eta  \Vert_{W^{2,2}(\omega)}^2+\Vert\bu\Vert_{W^{2,2}(\Oeta)}^2\big)\dt
\\&
+
cT
\big(
\Vert\bT_0\Vert_{L^2(\Omega_{\eta_0})}^2
+
T \Vert\rho_0  \Vert_{L^2(\Omega_{\eta_0})}^2
\big) 
\exp\bigg(
c
\int_I
\Vert\nabx \bu\Vert_{L^2(\Oeta)}^2 \dt\bigg).
\end{aligned}
\end{equation}
If we now combine \eqref{spaceRegRho1}, \eqref{spaceRegT1} \eqref{timeRegRho1} and \eqref{timeRegT1}, we obtain
\begin{equation}
\begin{aligned} 
\int_I\big(\Vert \partial_t \rho\Vert_{L^2(\Oeta)}^2
&+
\Vert \partial_t \bT\Vert_{L^2(\Oeta)}^2
\big)\dt
+
\sup_{t\in I} \big(\Vert \rho(t)\Vert_{W^{1,2}(\Oeta)}^2
+
\Vert \bT(t)\Vert_{W^{1,2}(\Oeta)}^2
\big)
\\&
+\int_I\big(\Vert  \rho\Vert_{W^{2,2}(\Oeta)}
+
\Vert  \bT\Vert_{W^{2,2}(\Oeta)}\big)
\\\lesssim
\Vert  \rho_0&\Vert_{W^{1,2}(\Omega_{\eta_0})}
+
\Vert  \bT_0\Vert_{W^{1,2}(\Omega_{\eta_0})}
\\&+
\int_I\big(\Vert\partial_t\eta  \Vert_{W^{2,2}(\omega)}^2+\Vert\bu\Vert_{W^{2,2}(\Oeta)}^2\big)\dt
\\&
+
T
\big(
\Vert\bT_0\Vert_{L^2(\Omega_{\eta_0})}^2
+
T \Vert\rho_0  \Vert_{L^2(\Omega_{\eta_0})}^2
\big)
\exp\bigg(
c
\int_I
\Vert\nabx \bu\Vert_{L^2(\Oeta)}^2 \dt\bigg).
\label{eq:thm:mainFPxxx}
\end{aligned}
\end{equation}  
Further combining \eqref{acceleration} and \eqref{eq:thm:mainFPxxx} yields the desired estimate. Note that we can use the equation \eqref{shellEQ} to directly obtain a bound for $\int_I\Vert \eta\Vert_{W^{4,2}(\omega)}^2\dt$ in terms of the dataset by virtue of \eqref{acceleration} and \eqref{eq:thm:mainFPxxx}.

 \end{proof}

\section{Weak solutions}
\label{sec:weakSol}
\subsection{Galerkin approximation}
\label{sec:Galerkin}
Our goal in this section is to construct a finite-dimensional approximation of a linearized version of the polymeric fluid-structure system for a given geometric setup.   
The approach follows a fixed-point argument where we decouple the equation for the solute from the solvent-structure system. In this case, the solute system becomes a bilinear system whose finite-dimensional approximation can be constructed using the classical Galerkin method. On the other hand, the problem for the solvent-structure system can follow the construction done in \cite{lengeler2014weak}  where the Galerkin basis on the moving domain is constructed from the Piola transform of the basis of the fixed reference domain.
For completeness, but to avoid repetition, we summarize the construction in what follows. 
\\ 
We consider a  given structure displacement $\zeta\in C(\overline{I}\times \omega)$  with an initial state $\zeta(0,\cdot)=\eta_0$ and a given driving divergence-free velocity field $\bv\in L^2_{\divx}(I\times \mathbb{R}^2)$. To ensure that the pair $(\zeta,\bv)$ are sufficiently smooth so that the subsequent analyses are well defined, we consider their spatial regularisation $(\zeta_\kappa,\bv_\kappa)$\footnote{Here, $f_\kappa:=\mathcal{R}_\kappa f$, where the regularising kernels $(\mathcal{R}_\kappa)_{\kappa>0}$ commutes with $\partial_t$. See \cite{lengeler2014weak} for more details.} and assume that they satisfy the interface condition $ \bv_\kappa\circ\bm{\varphi}_{\zeta_\kappa}=\bn\partial_t \zeta_\kappa$ on $I\times\omega$. 
For $\kappa>0$ fixed, we are going to use a Galerkin approximation to construct a weak solution $(\eta_\kappa, \bu_\kappa, \rho_\kappa,\bT_\kappa)$ of the following linearized  system
\begin{align}
\divx \bu_\kappa=0, \label{divfreek}
\\
\partial_t \rho_\kappa+ (\bv_\kappa\cdot \nabx) \rho_\kappa
= 
\Delx \rho_\kappa 
,\label{rhoEquk}
\\
\partial_t \bu_\kappa  + (\bv_\kappa\cdot \nabx)\bu_\kappa
= 
\delx \bu_\kappa -\nabx p_\kappa
+\bff_\kappa
+
\divx   \bT_\kappa, \label{momEquk}
\\
 \partial_t^2 \eta_\kappa - \partial_t\partial_y^2 \eta_\kappa + \partial_y^4 \eta_\kappa = g_\kappa -  (\mathbb{S}_\kappa\bn_{\zeta_\kappa})\circ \bm{\varphi}_{\zeta_\kappa}\cdot\bn\, \det(\partial_y\bm{\varphi}_{\zeta_\kappa}),\label{shellEquk}
\\
\partial_t \bT_\kappa + (\bv_\kappa\cdot \nabx) \bT_\kappa
=
(\nabx \bu_\kappa)\bT_\kappa + \bT_\kappa(\nabx \bu_\kappa)^\top - 2(\bT_\kappa - \rho_\kappa \mathbb{I})+\Delx \bT_\kappa \label{solutek}
\end{align}
on $I\times\Omega_{\zeta_\kappa}\subset \mathbb R^{1+2}$ where
\begin{align*}
\mathbb{S}_\kappa= (\nabx \bu_\kappa +(\nabx \bu_\kappa)^\top) -p_\kappa\mathbb{I}+ \bT_\kappa
\end{align*}
and
\begin{align}
&\eta_\kappa(0,\cdot)=\eta_{0,\kappa}(\cdot), \qquad\partial_t\eta_\kappa(0,\cdot)=\eta_{\star,\kappa}(\cdot) & \text{in }\omega,
\\
&\bu_\kappa(0,\cdot)=\bu_{0,\kappa}(\cdot) & \text{in }\Omega_{\zeta_{0,\kappa}},
\\
&\rho_\kappa(0,\cdot)=\rho_{0,\kappa}(\cdot),\quad\bT_\kappa(0,\cdot)=\bT_{0,\kappa}(\cdot) &\text{in }\Omega_{\zeta_{0,\kappa}},
\label{initialCondSolvK}
\\
& 
\bn_{\zeta_\kappa}\cdot\nabx\rho_\kappa=0,\qquad
\bn_{\zeta_\kappa}\cdot\nabx\bT_\kappa=0 &\text{on }I\times\partial\Omega_{\zeta_\kappa}.
\label{bddCondSolvK}
\end{align}
We now make precise, the notion of a weak solution $(\eta_\kappa, \bu_\kappa, \rho_\kappa,\bT_\kappa)$ in this linearised setting. 
\begin{definition}
\label{def:weaksolmartFPReg}
Let $(\bff_\kappa, g_\kappa, \rho_{0,\kappa}, \bT_{0,\kappa}, \bu_{0,\kappa}, \eta_{0,\kappa}, \eta_{\star,\kappa})$
be a dataset that satisfies
\begin{equation}
\begin{aligned}
\label{mainDataForAllReg}
&\bff_\kappa \in L^2(I;L^{2}_\mathrm{loc}(\mathbb{R}^2)),
\qquad g\in L^2(I;L^{2}(\omega)),
\\&
\eta_{0,\kappa} \in W^{2,2}(\omega) \text{ with } \Vert \eta_{0,\kappa} \Vert_{L^\infty( \omega)} < L, \quad \eta_{\star,\kappa} \in L^{2}(\omega),
\\&\bu_{0,\kappa} \in L^{2}_{\divx}(\Omega_{\eta_{0,\kappa}} )\text{ is such that }\bu_{0,\kappa} \circ \bm{\varphi}_{\eta_{0,\kappa}} =\eta_{\star,\kappa} \bn \text{ on } \omega,
\\&
\rho_{0,\kappa}\in L^{2}(\Omega_{\eta_{0,\kappa}}), \quad
\bT_{0,\kappa}\in L^{2}(\Omega_{\eta_{0,\kappa}}),
\\&
\rho_{0,\kappa}\geq 0,\,\, \bT_{0,\kappa}>0 \quad \text{a.e. in } \Omega_{\eta_0}.
\end{aligned}
\end{equation}
We call 
$(\eta_\kappa, \bu_\kappa, \rho_\kappa,\bT_\kappa)$
a \textit{weak solution} of   \eqref{divfreek}--\eqref{solutek}   if:
\begin{itemize}
\item[(a)] the following properties 
\begin{align*}
&\eta_\kappa\in  W^{1,\infty}\big(I;L^{2}(\omega)  \big)  \cap W^{1,2}\big(I;W^{1,2}(\omega)  \big) \cap L^{\infty}\big(I;W^{2,2}(\omega)  \big),
\\&
 \Vert\eta_\kappa\Vert_{L^\infty(I\times\omega)}<L,
\\
&\bu_\kappa\in
L^{\infty} \big(I; L^{2}(\Omega_{\zeta_\kappa}) \big)\cap L^2\big(I;W^{1,2}_{\divx}(\Omega_{\zeta_\kappa})  \big),
\\
&
\rho_\kappa \in   L^{\infty}\big(I;L^{2}(\Omega_{\zeta_\kappa})  \big)
\cap 
L^2\big(I;W^{1,2}(\Omega_{\zeta_\kappa})  \big),
\\
&
\bT_\kappa \in   L^{\infty}\big(I;L^{2}(\Omega_{\zeta_\kappa})  \big)
\cap 
L^2\big(I;W^{1,2}(\Omega_{\zeta_\kappa})  \big),
\\&
\rho_\kappa\geq 0,\,\, \bT_\kappa>0 \quad \text{a.e. in } I\times \Omega_{\zeta_\kappa}
\end{align*}
holds;
\item[(b)] for all  $  \psi  \in C^\infty (\overline{I}\times \R^2 )$, we have
\begin{equation}
\begin{aligned} 
\label{weakRhoEqReg}
\int_I  \frac{\mathrm{d}}{\dt}
\int_{\Omega_{\zeta_\kappa}} \rho_\kappa\psi \dx \dt 
&=
\int_I
\int_{\Omega_{\zeta_\kappa}}[\rho_\kappa\partial_t\psi + (\rho_\kappa\bv_\kappa\cdot\nabx) \psi] \dx\dt
\\&-
\int_I\int_{\Omega_{\zeta_\kappa}}\nabx \rho_\kappa \cdot\nabx \psi  \dx\dt;
\end{aligned}
\end{equation}
\item[(c)] for all  $  \mathbb{Y}  \in C^\infty (\overline{I}\times \R^2)$, we have
\begin{equation}
\begin{aligned}
\label{weakFokkerPlanckEqKappa} 
\int_I  \frac{\mathrm{d}}{\dt}
\int_{\Omega_{\zeta_\kappa}} \bT_\kappa:\mathbb{Y} \dx \dt 
&=
\int_I
\int_{\Omega_{\zeta_\kappa}}[\bT_\kappa :\partial_t\mathbb{Y} + \bT_\kappa:(\bv_\kappa\cdot\nabx) \mathbb{Y}] \dx\dt
\\&+
\int_I\int_{\Omega_{\zeta_\kappa}}
[(\nabx \bu_\kappa )\bT_\kappa  + \bT_\kappa (\nabx \bu_\kappa )^\top]:\mathbb{Y} \dx\dt
\\&
-2\int_I\int_{\Omega_{\zeta_\kappa}}(\bT_\kappa :\mathbb{Y}  - \rho_\kappa  \mathrm{tr}(\mathbb{Y} ))\dx\dt
\\&-
\int_I\int_{\Omega_{\zeta_\kappa}}\nabx \bT_\kappa ::\nabx \mathbb{Y}  \dx\dt;
\end{aligned}
\end{equation}
\item[(d)] for all $(\bm{\phi},\phi)  \in C^\infty_{\divx} (\overline{I}\times \R^2)\otimes  C^\infty (\overline{I}\times \omega )$ with $\bm{\phi}(T,\cdot)=0$, $\phi(T,\cdot)=0$ and $\bm{\phi}\circ \bm{\varphi}_{\zeta_\kappa}= \phi \bn$, we have
\begin{equation}
\begin{aligned}
\label{weakFluidStrutReg}
\int_I\frac{\dd}{\dt}\bigg(
\int_{\omega}&  \partial_t \eta_\kappa \, \phi\dy
+
\int_{\Omega_{\zeta_\kappa}}
\bu_\kappa  \cdot  \bm{\phi} \dx
\bigg)dt
\\
&=
\int_I
\int_{\omega} \bigg(
\partial_t\eta_\kappa\partial_t\phi
-
\partial_t\naby\eta_\kappa\naby\phi
+
g_\kappa\phi 
-
 \Dely \phi \Dely\eta_\kappa\bigg)\dy\dt
\\&+
\int_I\int_{\omega} \bigg(\frac{1}{2}\bn_{\zeta_\kappa}\circ\bm{\varphi}_{\zeta_\kappa}\cdot \bn^\top \phi \,  \partial_t\zeta_\kappa \,\partial_t\eta_\kappa  \,
 \det(\naby\bm{\varphi}_{\zeta_\kappa})
  \bigg)\dy\dt
\\
&+
\int_I
 \int_{\Omega_{\zeta_\kappa}}\Big(  \bu_\kappa\cdot \partial_t \bm{\phi} 
-
\frac{1}{2}((\bv_\kappa\cdot\nabx)\bu_\kappa)\cdot  \bm{\phi} \Big) \dx\dt
\\
&+
\int_I
 \int_{\Omega_{\zeta_\kappa}}\Big(  
\frac{1}{2}((\bv_\kappa\cdot\nabx) \bm{\phi}) \cdot \bu_\kappa
-\nabx \bu_\kappa:\nabx  \bm{\phi}
+
\bff_\kappa\cdot \bm{\phi}
\Big) \dx\dt
\\
&-
\int_I
 \int_{\Omega_{\zeta_\kappa}} \mathbb{T}_\kappa :\nabx \bm{\phi}  \dx\dt;
\end{aligned}
\end{equation}
\item[(e)] the energy inequality
\begin{equation}
\begin{aligned}
\sup_{t\in I}&
\bigg(
\int_{\Omega_{\zeta_\kappa}}\mathrm{tr}(\bT_\kappa(t))\dx
+
\Vert \bu_\kappa(t)\Vert_{L^2(\Omega_{\zeta_\kappa})}^2 
+
 \Vert\partial_t \eta_\kappa(t)\Vert_{L^2(\omega)}^2 
+ 
\Vert\partial_y^2\eta_\kappa(t)\Vert_{L^2(\omega)}^2
\bigg)
\\&+
\int_I
\int_{\Omega_{\zeta_\kappa}}\mathrm{tr}(\bT_\kappa)\dx\dt
+\int_I\Vert \nabx \bu_\kappa \Vert_{L^2(\Omega_{\zeta_\kappa})}^2\dt
+\int_I\Vert\partial_t\partial_y \eta_\kappa \Vert_{L^2(\omega)}^2\dt
\\&\lesssim
\int_{\Omega_{\eta_{0,\kappa}}}\mathrm{tr}(\bT_{0,\kappa})\dx
+
\Vert \bu_0\Vert_{L^2(\Omega_{\eta_{0,\kappa}})}^2 
+
 \Vert\eta_{\star,\kappa}\Vert_{L^2(\omega)}^2 
+ 
\Vert\partial_y^2\eta_{0,\kappa}\Vert_{L^2(\omega)}^2
\\&+
T
\int_{\Omega_{\eta_{0,\kappa}}}\rho_{0,\kappa} \dx
+
\int_I\Vert \bff_\kappa\Vert_{L^2(\Omega_{\zeta_\kappa})}^2\dt
+
\int_I\Vert g_\kappa\Vert_{L^2(\omega)}^2\dt
\end{aligned}
\end{equation}
holds.
\item[(f)] In addition, we have
\begin{equation}
\begin{aligned}
\label{strgEstFP3Reg}
\sup_{t\in I}&
\big(\Vert\rho_\kappa(t)\Vert_{L^2(\Omega_{\zeta_\kappa})}^2
+
\Vert\bT_\kappa(t)\Vert_{L^2(\Omega_{\zeta_\kappa})}^2
\big)
\\&\qquad\qquad+\int_I\big(\Vert \rho_\kappa \Vert_{W^{1,2}(\Omega_{\zeta_\kappa})}^2
+
\Vert \bT_\kappa \Vert_{W^{1,2}(\Omega_{\zeta_\kappa})}^2\big)\dt
\\&
\lesssim
\Vert\rho_{0,\kappa} \Vert_{L^2(\Omega_{\eta_0})}^2
+ 
\Vert\bT_{0,\kappa}\Vert_{L^2(\Omega_{\eta_0})}^2
\exp\bigg(
c
\int_I
\Vert\nabx \bu_\kappa\Vert_{L^2(\Omega_{\zeta_\kappa})}^2 \dt\bigg)
\\&\qquad\qquad 
+T \Vert\rho_{0,\kappa}  \Vert_{L^2(\Omega_{\eta_{0}})}^2
\exp\bigg(
c
\int_I
\Vert\nabx \bu_\kappa\Vert_{L^2(\Omega_{\zeta_\kappa})}^2 \dt\bigg).
\end{aligned}
\end{equation}
\end{itemize}
\end{definition} 
Our main result in this section is now given as follows.
\begin{theorem}
\label{thm:linReg}
Let $\kappa>0$ be fixed.
For a dataset $(\bff_\kappa, g_\kappa, \rho_{0,\kappa}, \bT_{0,\kappa}, \bu_{0,\kappa}, \eta_{0,\kappa}, \eta_{\star,\kappa})$
that satisfies \eqref{mainDataForAllReg}, there  exists a %global  
 weak solution $(\eta_\kappa, \bu_\kappa, \rho_\kappa,\bT_\kappa)$ of   \eqref{divfreek}--\eqref{solutek}.
\end{theorem}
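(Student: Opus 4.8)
The plan is to construct $(\eta_\kappa,\bu_\kappa,\rho_\kappa,\bT_\kappa)$ via a decoupled Galerkin scheme combined with an interior fixed-point argument, since the system \eqref{divfreek}--\eqref{solutek}, while linear in each unknown separately, is bilinearly coupled through the terms $\divx\bT_\kappa$ in \eqref{momEquk}, $(\nabx\bu_\kappa)\bT_\kappa$ in \eqref{solutek}, and $\bT_\kappa$ inside $\mathbb{S}_\kappa$ in \eqref{shellEquk}. First I would fix the geometry: since $\zeta_\kappa\in C(\overline I\times\omega)$ is smooth (a mollification) and $\zeta_\kappa(0,\cdot)=\eta_{0,\kappa}$, the domain $\Omega_{\zeta_\kappa(t)}$ moves smoothly and $\|\zeta_\kappa\|_{L^\infty}<L$ holds at least on a short time interval; the energy estimates below will show the constructed $\eta_\kappa$ stays below $L$, but for the geometry itself this is given. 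On this fixed moving domain I set up two Galerkin bases: for the solvent-structure block I use, following Lengeler--R\r{u}\v{z}i\v{c}ka and \cite{lengeler2014weak}, the coupled basis obtained by Piola-transporting a basis of divergence-free fields on $\Omega$ matched with the trace constraint $\bm\phi\circ\bm\varphi_{\zeta_\kappa}=\phi\bn$ on $\omega$; for the solute block $(\rho_\kappa,\bT_\kappa)$ I use an orthonormal basis of $L^2(\Omega_{\zeta_\kappa(t)})$ (e.g. Neumann eigenfunctions of a time-dependent Laplacian, or again a transported fixed basis) respecting the Neumann conditions \eqref{bddCondSolvK}.

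The decoupling is carried out by a Schauder fixed-point map on a bounded set of $L^2(I;L^2(\Omega_{\zeta_\kappa}))$-type spaces: given a candidate $\widetilde\bu$ (and hence $\nabx\widetilde\bu$) and a candidate $\widetilde\bT$, I first solve the $(\rho_\kappa,\bT_\kappa)$ system \eqref{rhoEquk},\eqref{solutek} — which is linear with given drift $\bv_\kappa$ and given coefficient $\nabx\widetilde\bu$, hence solvable by straightforward Galerkin + the a priori bounds \eqref{weakRhoEqx}, \eqref{strgEstFP} type estimates, yielding bounds in $L^\infty_tL^2\cap L^2_tW^{1,2}$ uniform in the Galerkin parameter; positivity $\rho_\kappa\ge0$, $\bT_\kappa>0$ is inherited via testing with $\rho_{\kappa,-}$ and the matrix argument of \cite{barrettBoy2011existence} as in the Remark. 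Then, feeding this $\bT_\kappa$ into \eqref{momEquk}--\eqref{shellEquk}, I solve the solvent-structure subproblem by Galerkin, using the skew-symmetrized convective term (visible in \eqref{weakFluidStrutReg}, where $(\bv_\kappa\cdot\nabx)\bu_\kappa$ appears halved together with the boundary term $\tfrac12\bn_{\zeta_\kappa}\circ\bm\varphi_{\zeta_\kappa}\cdot\bn^\top\phi\,\partial_t\zeta_\kappa\partial_t\eta_\kappa\det(\naby\bm\varphi_{\zeta_\kappa})$) so that the energy identity \eqref{enerFormalFluiStru} closes with the extra stress contribution $-\int\bT_\kappa:\nabx\bu_\kappa$ absorbed. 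This produces $(\eta_\kappa,\bu_\kappa)$ with the regularity in (a), in particular $\bu_\kappa\in L^2_tW^{1,2}_{\divx}$ and $\eta_\kappa\in W^{1,\infty}_tL^2\cap W^{1,2}_tW^{1,2}\cap L^\infty_tW^{2,2}$. Setting the fixed-point map $(\widetilde\bu,\widetilde\bT)\mapsto(\bu_\kappa,\bT_\kappa)$, one checks it maps a large ball into itself (by the a priori estimates, which are uniform because $\zeta_\kappa,\bv_\kappa$ are fixed data) and is compact via Aubin--Lions (time derivatives of $\bu_\kappa,\rho_\kappa,\bT_\kappa$ are controlled in negative-order spaces from the equations); Schauder then gives a fixed point, i.e. a weak solution of the full linearized $\kappa$-system.

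The main obstacle I expect is twofold and both parts are geometric rather than functional-analytic. First, the convective terms and the Reynolds transport theorem on the moving domain $\Omega_{\zeta_\kappa(t)}$ generate boundary contributions on $\partial\Omega_{\zeta_\kappa}$ (the $\mathcal H^1$-integrals seen in \eqref{spaceRegRho0}), and one must verify these close correctly against the interface condition $\bv_\kappa\circ\bm\varphi_{\zeta_\kappa}=\bn\partial_t\zeta_\kappa$ so that the energy inequality (e) and estimate (f) genuinely hold — this is exactly why the weak formulation \eqref{weakFluidStrutReg} carries the symmetrization and the explicit boundary term. Second, verifying that the Galerkin approximations respect the time-dependent constraint $\bm\phi\circ\bm\varphi_{\zeta_\kappa}=\phi\bn$ and the divergence-free condition simultaneously requires the Piola-transform construction and the time-dependent Bogovskij operator $\mathrm{Bog}_{\zeta_\kappa}$ recalled in Section~\ref{sec:prelim} to correct the non-solenoidal transported fields; keeping all constants uniform in the Galerkin dimension (but allowed to depend on $\kappa$, since $\|\zeta_\kappa\|_{C^k}$ is finite) is where the bookkeeping is heaviest. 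The passage to the limit in the Galerkin dimension itself is then routine: weak-$*$ limits using (a), strong $L^2$-compactness of $\bu_\kappa,\rho_\kappa,\bT_\kappa$ from Aubin--Lions to handle the bilinear terms $(\nabx\bu_\kappa)\bT_\kappa$ and the stress coupling, and lower semicontinuity for the energy inequality.
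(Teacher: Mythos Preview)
Your approach is sound and would yield the theorem, but the paper takes a different route at the fixed-point step. You propose Schauder's theorem, relying on Aubin--Lions compactness to show the decoupling map is compact. The paper instead works entirely at the finite-dimensional Galerkin level (fixed $N$): it factors the map as $\mathtt{T}_2:(\overline{\eta^N_\kappa},\overline{\bu^N_\kappa})\mapsto(\rho^N_\kappa,\bT^N_\kappa)$ followed by $\mathtt{T}_1:(\rho^N_\kappa,\bT^N_\kappa)\mapsto(\eta^N_\kappa,\bu^N_\kappa)$, and shows the composite $\mathtt{T}=\mathtt{T}_1\circ\mathtt{T}_2$ is a \emph{contraction} on a ball in the energy space for short time $T_*$, via explicit difference estimates on the bilinear solute equations (Ladyzhenskaya plus Gr\"onwall). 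This yields a unique coupled Galerkin solution, extended globally by the uniform energy bound, after which the limit $N\to\infty$ proceeds by weak convergence. The contraction route buys uniqueness at the Galerkin level for free and defers all compactness machinery to the genuinely nonlinear stage in Section~\ref{sec:weakSol}.2, where Aubin--Lions is unavoidable; your Schauder route front-loads the compactness but sidesteps the contraction estimates. One minor point: your candidate $\widetilde\bT$ plays no role in your map as described (you feed the \emph{output} $\bT_\kappa$, not $\widetilde\bT$, into the momentum equation), so the fixed-point variable is really just $\widetilde\bu$, which in fact matches the paper's choice of iterating on the solvent-structure pair. Also, the Bogovskij operator is not needed at this stage; the Piola transform alone handles the divergence constraint for the transported basis.
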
 
\begin{remark}
We remark here that the subscript $\kappa$ for the unknowns are meant to emphasis that these are the solutions to the regularised system constructed  from a given regularised dataset.
\end{remark}

We now devote the rest of this section to the proof of Theorem \ref{thm:linReg}.
To obtain $(\eta_\kappa, \bu_\kappa, \rho_\kappa,\bT_\kappa)$, we consider the basis $(\overline{\bm{X}}_n)_{n\in \mathbb{N}}$ and $(\overline{Y}_n)_{n\in \mathbb{N}}$ of $W^{1,2}_{0,\divx}(\Omega)$ and $W^{2,2}(\omega)$ respectively. Then by \cite[Theorem A.3]{lengeler2014weak}, there exists solenoidal vector fields $(\overline{\bm{Y}}_n)_{n\in \mathbb{N}}$ that solve a Stokes system on the fixed reference domain with boundary data $(\bn\overline{Y}_n)_{n\in \mathbb{N}}$. Now, for all $t\in \overline{I}$, we obtain from $(\bm{\overline{X}}_n)_{n\in \mathbb{N}}$, the following basis
\begin{align*}
 \bm{X}_n(t,\cdot):=\mathcal{J}_{\zeta_\kappa(t)}\overline{\bm{X}}_n
\end{align*}
for $W^{1,2}_{0,\divx}(\Omega_{\zeta_\kappa(t)})$ where a vector field $\mathbf{v}$, $\mathcal{J}_{\zeta_\kappa}\bv$ defined by
\begin{align}
\label{piolaTransform} 
\mathcal{J}_{\zeta_\kappa}\bv
=
\big(\nabx  \bm{\Psi}_{\zeta_\kappa}(\mathrm{det}\nabx  \bm{\Psi}_{\zeta_\kappa})^{-1}
\bv
\big)\circ \bm{\Psi}_{\zeta_\kappa}^{-1}
%, \quad t\in \overline{I}
\end{align}
is Piola transform of $\mathbf{v}$  with respect to a mapping $\zeta:\omega \rightarrow \mathbb{R}$. The Piola transform is invertible with inverse
\begin{align}
\label{piolaTransformInverse}
\mathcal{J}_{\zeta_\kappa}^{-1}\bv
=
\big((\nabx  \bm{\Psi}_{\zeta_\kappa})^{-1}(\mathrm{det}\nabx  \bm{\Psi}_{\zeta_\kappa}) 
\bv
\big)\circ \bm{\Psi}_{\zeta_\kappa} 
%, \quad t\in \overline{I}
.
\end{align}
In order to ensure that the basis for the solvent system matches with the basis for the structure at the solvent-structure interface, additionally, we consider the Piola transform of the  solenoidal vector fields $(\overline{\bm{Y}}_n)_{n\in \mathbb{N}}$ by setting
\begin{align*}
\bm{Y}_n(t,\cdot):=\mathcal{J}_{\zeta_\kappa(t)}\overline{\bm{Y}}_n.
\end{align*}
Consequently, if we set
\begin{align*}
Y_n(t,\cdot):=(\det(\partial_y\bm{\varphi}_{\zeta_\kappa(t)}))^{-1}\overline{Y}_k
\end{align*}
we obtain the interface condition
\begin{align*}
Y_n(t,\cdot)\bn=\bm{Y}_n(t,\cdot)\circ\bm{\varphi}_{\zeta_\kappa(t)}.
\end{align*}
Our extended basis for the moving domain will now consist of  the pair $(\bm{\psi}_n,\psi_n)_{n\in\mathbb{N}}$ where
\begin{equation}
\label{basis}
\bm{\psi}_n  = \left\{
  \begin{array}{lr}
    \bm{X}_n & : n \text{ even},\\
    \bm{Y}_n & : n \text{ odd}.
  \end{array}
\right.
\qquad\text{and}\qquad
\psi_n\bn:=\bm{\psi}_n\circ \bm{\varphi}_{\zeta_\kappa(t)}.
\end{equation}
With the basis \eqref{basis} in hand, 
we can use Picard--Lindel\"of theorem to find  functions $\alpha^N_n\in C^1(\overline{I})$, $n,N\in \mathbb{N}$ such that $\bu^N_\kappa:=\alpha^N_n\bm{\psi}_n$ and $\eta^N_\kappa=\int_0^t\alpha^N_n\psi_n\ds+\eta^N_{\kappa,0}$ solves\footnote{The dependence of $\bu^N_\kappa$ and $\eta^N_\kappa$ on $\kappa$ follows from the implicit dependence of $\bm{\psi}_n$ and $\psi_n$ on $\kappa$}
\begin{equation}
\begin{aligned}
\label{galerkinweak1}
\frac{\dd}{\dt}\bigg(
&\int_{\omega}  \partial_t \eta^N_\kappa \, \psi_j\dy
+
\int_{\Omega_{\zeta_\kappa}}
\bu^N_\kappa  \cdot  \bm{\psi}_j \dx
\bigg)
\\
&=
\int_{\omega} \bigg(
\partial_t\eta^N_\kappa\partial_t\psi_j
-
\partial_t\naby\eta^N_\kappa\naby\psi_j+g^N_\kappa\psi_j 
-
 \Dely \psi_j \Dely\eta^N_\kappa\bigg)\dy
\\&+
\int_{\omega} \bigg(\frac{1}{2}\bn_{\zeta_\kappa}\circ\bm{\varphi}_{\zeta_\kappa}\cdot \bn^\top \psi_j \,  \partial_t\zeta_\kappa \,\partial_t\eta^N_\kappa  \,
\det(\naby\bm{\varphi}_{\zeta_\kappa})
  \bigg)\dy
\\
&+
 \int_{\Omega_{\zeta_\kappa}}\Big(  \bu^N_\kappa\cdot \partial_t \bm{\psi}_j 
-
\frac{1}{2}((\bv_\kappa\cdot\nabx)\bu^N_\kappa)\cdot  \bm{\psi}_j \Big) \dx
\\
&+
 \int_{\Omega_{\zeta_\kappa}}\Big(  
\frac{1}{2}((\bv_\kappa\cdot\nabx) \bm{\psi}_j) \cdot \bu^N_\kappa
-\nabx \bu^N_\kappa:\nabx  \bm{\psi}_j  
+
\bff^N_\kappa\cdot \bm{\psi}_j
-\mathbb{T}^N_\kappa :\nabx \bm{\psi}_j \Big) \dx
\end{aligned}
\end{equation}
for all $1\leq j\leq N$ with the
pair $(\rho^N_\kappa,\bT^N_\kappa)$ determined by
\begin{align}  
\partial_t \rho^N_\kappa+ (\bv_\kappa\cdot \nabx) \rho^N_\kappa
= 
\Delx \rho^N_\kappa 
\label{rhoAlone}
, 
\\ 
\partial_t \bT^N_\kappa + (\bv_\kappa\cdot \nabx) \bT^N_\kappa
=
(\nabx \bu^N_\kappa)\bT^N_\kappa + \bT^N_\kappa(\nabx  \bu^N_\kappa)^\top - 2(\bT^N_\kappa - \rho^N_\kappa \mathbb{I})
+ 
\Delx \bT^N_\kappa \label{soluteAlone}
\end{align}
on $I\times \Omega_{\zeta_\kappa} \subset \mathbb R^{1+2}$. Here, 
%\begin{align*}
%\rho^N_\kappa(t, \bx)= \int_{\Omega_{\zeta_\kappa}} f^N_\kappa(t,\bx,\bq) \dq
%\end{align*}
%and  
we complement \eqref{rhoAlone}--\eqref{soluteAlone} with the following initial and boundary conditions
\begin{align}
&\rho^N_{0,\kappa}\geq 0,\quad \bT^N_{0,\kappa}>0 \quad   &\text{a.e in } I\times\Omega_{\zeta_\kappa}
\label{positiveInitSoluteAlone}
\\&\rho^N_\kappa(0,\cdot)=\rho^N_{0,\kappa}(\cdot),\quad\bT^N_\kappa(0,\cdot)=\bT^N_{0,\kappa}(\cdot) &\text{in }\Omega_{\zeta_{0,\kappa}},
\label{initialCondSolvAlone} 
\\ 
& 
\bn_{\zeta_\kappa}\cdot\nabx\rho^N_\kappa=0 &\text{on }I\times\partial\Omega_{\zeta_\kappa},
\label{bddCondSolvRhoAlone}
\\&
\bn_{\zeta_\kappa}\cdot\nabx\bT^N_\kappa=0
 &\text{on }I\times\partial\Omega_{\zeta_\kappa}.
\label{bddCondSolvTAlone}
\end{align}
Indeed, for a fixed $N\in \mathbb{N}$, suppose that $(\overline{\eta^N_\kappa},\overline{\bu^N_\kappa})$ satisfying
\begin{align*}
&\overline{\eta^N_\kappa}\in  W^{1,\infty}\big(I;L^{2}(\omega)  \big)  \cap W^{1,2}\big(I;W^{1,2}(\omega)  \big) \cap L^{\infty}\big(I;W^{2,2}(\omega)  \big),
\\&
\overline{\bu^N_\kappa}\in
L^{\infty} \big(I; L^{2}(\Omega_{\zeta_\kappa}) \big)\cap L^2\big(I;W^{1,2}_{\divx}(\Omega_{\zeta_\kappa})  \big),
\\&
\overline{\bu^N_\kappa}\circ\bm{\varphi}_{\zeta_\kappa}=\bn\partial_t \overline{\eta^N_\kappa} \text{ on } I\times\omega
\end{align*} 
is given. Then due to the bilinearity of \eqref{rhoAlone}-\eqref{soluteAlone},  a solution pair  $(\rho^N_\kappa,\bT^N_\kappa)$ of \eqref{rhoAlone}--\eqref{bddCondSolvTAlone} satisfying
\begin{equation}
\begin{aligned} 
\label{weakRhoNKappa}
\int_I  \frac{\mathrm{d}}{\dt}
\int_{\Omega_{\zeta_\kappa} } \rho^N_\kappa\phi_i \dx \dt 
&=
\int_I
\int_{\Omega_{\zeta_\kappa}}[\rho^N_\kappa\partial_t\phi_i + (\rho^N_\kappa\bv_\kappa\cdot\nabx) \phi_i] \dx\dt
\\&-
\int_I\int_{\Omega_{\zeta_\kappa}}\nabx \rho^N_\kappa \cdot\nabx \phi_i  \dx\dt
\end{aligned}
\end{equation}
and   
\begin{equation}
\begin{aligned}
\label{weakFokkerPlanckNKappa}
\int_I  \frac{\mathrm{d}}{\dt}
\int_{\Omega_{\zeta_\kappa} } \bT^N_\kappa:\mathbb{Y}_i \dx \dt 
&=
\int_I
\int_{\Omega_{\zeta_\kappa} }[\bT^N_\kappa :\partial_t\mathbb{Y}_i + \bT^N_\kappa:(\bv_\kappa\cdot\nabx) \mathbb{Y}_i] \dx\dt
\\&+
\int_I\int_{\Omega_{\zeta_\kappa}}
[(\nabx \overline{\bu^N_\kappa} )\bT^N_\kappa  + \bT^N_\kappa (\nabx \overline{\bu^N_\kappa} )^\top]:\mathbb{Y}_i \dx\dt
\\&
-2\int_I\int_{\Omega_{\zeta_\kappa} }(\bT^N_\kappa :\mathbb{Y}_i  - \rho^N_\kappa  \mathrm{tr}(\mathbb{Y}_i ))\dx\dt
\\&-
\int_I\int_{\Omega_{\zeta_\kappa}}\nabx \bT^N_\kappa ::\nabx \mathbb{Y}_i  \dx\dt
\end{aligned}
\end{equation}
for all $1\leq i\leq N$ is obtained as a limit $M\rightarrow\infty$ of, yet again, a Galerkin approximation $(\rho^{N,M}_\kappa,\bT^{N,M}_\kappa)$ for a basis $(\phi_i,\mathbb{Y}_i )$ in $W^{1,2}_0(\Omega_{\zeta_\kappa})\otimes W^{1,2}_0(\Omega_{\zeta_\kappa})$. Furthermore,   if we take the trace in \eqref{soluteAlone}, then similar to  \eqref{freeEnergyEst}, we obtain
\begin{equation}
\begin{aligned}
\label{freeEnergyEstGar}
\int_{\Omega_{\zeta_\kappa}} \mathrm{tr}(\bT^N_\kappa(t))\dx
+&
  2
  \int_0^t
  \int_{\Omega_{\zeta_\kappa}}
  \mathrm{tr}(\bT^N_\kappa)\dx\dt' 
\\\leq&
\int_0^t
 \Vert\bT^N_\kappa\Vert_{L^2(\Omega_{\zeta_\kappa})}^2 \dt'
+
\int_0^t
\Vert\nabx\overline{\bu^N_\kappa}\Vert_{L^2(\Omega_{\zeta_\kappa})}^2 \dt'
\\&
+
6
\int_{\Omega_{\eta_{0,\kappa}}}\rho^N_{\kappa,0} \dx
+ 
\int_{\Omega_{\eta_{0,\kappa}}}\mathrm{tr}(\bT^N_{\kappa,0})\dx
\end{aligned}
\end{equation}
for all $t\in I$. On the other hand, if we take the inner product of   \eqref{soluteAlone} with $\bT^N_\kappa$ and apply Gr\"onwall's lemma,  we obtain
\begin{equation}
\begin{aligned}
\label{strgEstFP1Alone}
\sup_{t\in I}\Vert\bT^N_\kappa(t)&\Vert_{L^2(\Omega_{\zeta_\kappa})}^2
+
\int_I\Vert \bT^N_\kappa \Vert_{W^{1,2}(\Omega_{\zeta_\kappa})}^2\dt 
\\&
\lesssim
\big(
\Vert\bT^N_{0,\kappa}\Vert_{L^2(\Omega_{\eta_{0,\kappa}})}^2
+
T
 \Vert\rho^N_{0,\kappa} \Vert_{L^2(\Omega_{\eta_{0,\kappa}})}^2
 \big)
 \exp\bigg(c
\int_I
\Vert\nabx \overline{\bu^N_\kappa}\Vert_{L^2(\Omega_{\zeta_\kappa})}^2 
\dt\bigg)
\end{aligned}
\end{equation}
with a similar but simpler estimate holding for $\rho^N_\kappa$.
If we now combine  \eqref{freeEnergyEstGar} and \eqref{strgEstFP1Alone}, we obtain
\begin{equation}
\begin{aligned}
\label{eneryEstSoluteAlone}
\sup_{t\in I}
\int_{\Omega_{\zeta_\kappa}}\mathrm{tr}(\bT^N_\kappa(t))\dx
&
+
\sup_{t\in I}
\Vert\bT^N_\kappa(t)\Vert_{L^2(\Omega_{\zeta_\kappa})}^2
+
\int_I
\int_{\Omega_{\zeta_\kappa}}\mathrm{tr}(\bT^N_\kappa)\dx\dt
\\&+
\int_I\Vert  \bT^N_\kappa \Vert_{W^{1,2}(\Omega_{\zeta_\kappa})}^2\dt 
  \lesssim
\mathcal{D}(\overline{\bu^N_\kappa},\rho^N_{0,\kappa},\bT^N_{0,\kappa})
\end{aligned}
\end{equation}
where
\begin{align*}
\mathcal{D}(\overline{\bu^N_\kappa},\rho^N_{0,\kappa},\bT^N_{0,\kappa})&:=
\int_I
\Vert\nabx\overline{\bu^N_\kappa}\Vert_{L^2(\Omega_{\zeta_\kappa})}^2 \dt
+
\int_{\Omega_{\eta_{0,\kappa}}}\rho^N_{0,\kappa} \dx
+ 
\int_{\Omega_{\eta_{0,\kappa}}}\mathrm{tr}(\bT^N_{0,\kappa})\dx
\\&
+
\big(
\Vert\bT^N_{0,\kappa}\Vert_{L^2(\Omega_{\eta_{0,\kappa}})}^2
+
T
 \Vert\rho^N_{0,\kappa} \Vert_{L^2(\Omega_{\eta_{0,\kappa}})}^2
 \big)
 \exp\bigg(c
\int_I
\Vert\nabx \overline{\bu^N_\kappa}\Vert_{L^2(\Omega_{\zeta_\kappa})}^2 
\dt\bigg).
\end{align*} 
The right-hand side of \eqref{eneryEstSoluteAlone} is finite and thus, we have constructed a solution
 $(\rho^N_\kappa,\bT^N_\kappa)$ of \eqref{rhoAlone}--\eqref{bddCondSolvTAlone} satisfying
\begin{align*}
&
\rho^N_\kappa \in   L^{\infty}\big(I;L^{2}(\Omega_{\zeta_\kappa})  \big)
\cap 
L^2\big(I;W^{1,2}(\Omega_{\zeta_\kappa})  \big),
\\
&
\bT^N_\kappa \in   L^{\infty}\big(I;L^{2}(\Omega_{\zeta_\kappa})  \big)
\cap 
L^2\big(I;W^{1,2}(\Omega_{\zeta_\kappa})  \big).
\end{align*}
Now, with the constructed pair  $(\rho^N_\kappa,\bT^N_\kappa)$ in hand, we revisit \eqref{galerkinweak1}. As stated earlier, its solution follows from Picard--Lindel\"of theorem.
Furthermore, by taking the pair $(\eta^N_\kappa, \bu^N_\kappa)$ as test functions, we obtain  the global bound  
\begin{equation}
\begin{aligned} 
\label{eneryEstSolvStruAlone} 
\sup_{t\in I}\Big(\Vert \bu^N_\kappa\Vert_{L^2(\Omega_{\zeta_\kappa})}^2 
&+
\Vert\partial_t \eta^N_\kappa\Vert_{L^2(\omega)}^2 
+  
\Vert\partial_y^2\eta^N_\kappa\Vert_{L^2(\omega)}^2
\Big)
\\&+
 \int_I\Big(\Vert \nabx \bu^N_\kappa\Vert_{L^2(\Omega_{\zeta_\kappa})}^2+  \Vert\partial_t\partial_y \eta^N_\kappa\Vert_{L^2(\omega)}^2\Big)\dt
\\
\lesssim&\int_I\Vert\bT^N_\kappa \Vert_{L^2(\Omega_{\zeta_\kappa})}^2\dt
+
\int_I\Vert\bff^N_\kappa \Vert_{L^2(\Omega_{\zeta_\kappa})}^2\dt
+
\int_I \Vert g^N_\kappa\Vert_{L^2(\Omega_{\zeta_\kappa})}^2 \dt
\\&+
\Vert \bu^N_{0,\kappa}\Vert_{L^2(\Omega_{\eta_{0,\kappa}})}^2 
+
\Vert \eta^N_{\star,\kappa}\Vert_{L^2(\omega)}^2 
+  
\Vert\partial_y^2\eta^N_{0,\kappa}\Vert_{L^2(\omega)}^2
\\
\lesssim& 1
\end{aligned}
\end{equation}
leading to $(\eta^N_\kappa,\bu^N_\kappa) \in X^I$ where
\begin{align*}
X^I
:=&
\big(
W^{1,\infty}(I;L^2(\omega))\cap W^{1,2}(I;W^{1,2}(\omega))\cap L^{\infty}(I;W^{2,2}(\omega))
\big)
\\&\otimes
\big(L^{\infty} \big(I; L^{2}(\Omega_{\zeta_\kappa}) \big)\cap L^2\big(I;W^{1,2}_{\divx}(\Omega_{\zeta_\kappa})  \big)
\big).
\end{align*}
At this point, on the one hand, we have obtained a solution  $(\rho^N_\kappa,\bT^N_\kappa)$ to  the solute system \eqref{rhoAlone}--\eqref{bddCondSolvTAlone}  given a solvent-structure pair $(\overline{\eta^N_\kappa},\overline{\bu^N_\kappa})$. On the other hand, we have also constructed a solvent-structure pair $(\eta^N_\kappa,\bu^N_\kappa)$ given a solute pair  $(\rho^N_\kappa,\bT^N_\kappa)$.
We can now use a fixed point argument to get a local solution $(\eta^N_\kappa, \bu^N_\kappa, \rho^N_\kappa,\bT^N_\kappa)$ for the mutually coupled system.
To do this, for a time $T_*>0$, $I_*:=(0,T_*)$ to be determined soon, we consider the solution map $\mathtt{T}=\mathtt{T}_1\circ \mathtt{T}_2: X^{I_*} \rightarrow  X^{I_*}$ where
\begin{align*}
\mathtt{T}(\overline{\eta^N_\kappa},\overline{\bu^N_\kappa})=(\eta^N_\kappa,\bu^N_\kappa), \quad \mathtt{T}_2(\overline{\eta^N_\kappa},\overline{\bu^N_\kappa})=(\rho^N_\kappa,\bT^N_\kappa), \quad \mathtt{T}_1(\rho^N_\kappa,\bT^N_\kappa)=(\eta^N_\kappa,\bu^N_\kappa)
\end{align*}
and the associated set  
\begin{align*} 
B_R^{I_*}:=\Big\{ (\overline{\eta^N_\kappa},\overline{\bu^N_\kappa}) \in X^{I_*}&\text{ with } \overline{\bu^N_\kappa}:=\overline{\alpha}^N_n\bm{\psi}_n,\, \overline{\eta^N_\kappa}:=\int_0^t\overline{\alpha}^N_n\psi_n\ds+\eta^N_{\kappa,0}
%, \, {\color{blue}\overline{\alpha}^N_n(0)=} 
\\&\qquad
\text{ such that } \Vert (\overline{\eta^N_\kappa},\overline{\bu^N_\kappa}) \Vert_{X^{I_*}}^2  \leq R^2\text{ for }t\in \overline{I}_*\Big\}
\end{align*}
for $R>0$ large enough and for fixed $\kappa>0$.
From  \eqref{eneryEstSoluteAlone} and \eqref{eneryEstSolvStruAlone}, it follows that  $\mathtt{T}:B_R^{I_*}     \rightarrow B_R^{I_*}$ for $R>0$ large and $T_*>0$ small, i.e. the solution mapping  $\mathtt{T}$ maps the ball $B_R^{I_*} $ into itself. 
%
%{\color{blue}
%\begin{align*}
%\Vert(\eta^N_\kappa,\bu^N_\kappa)\Vert_{X^{I_*}}^2
%&\lesssim
%1+T_*\sup_{t\in I_*}\Vert\bT^N_\kappa(t)\Vert_{L^2(\Omega_{\zeta_\kappa})}^2
%\\
%&\lesssim
%1+T_*(1+T_*)\exp\big(c\Vert (\overline{\eta^N_\kappa},\overline{\bu^N_\kappa})  \Vert_{X^{I_*}}^2\big)
%\\
%&\lesssim
%1+T_*(1+T_*)\exp\big(c R^2\big)
%\\
%&\leq
%\tfrac{1}{2}R^2 +cT_*\exp\big(c R^2\big)
%\\
%&
%\leq R^2, \qquad\qquad \text{for } T_*\leq \tfrac{1}{2c}R^2 e^{-cR^2}
%\end{align*}
%}
To show the contraction property leading to the existence of a unique local solution for the fully coupled system, we consider any two solution pair $(\eta^{N,i}_\kappa,\bu^{N,i}_\kappa)$, $i=1,2$ of the solvent-structure system with datasets 
$(\bff^N_\kappa, g^N_\kappa, \bu^N_{0,\kappa}, \eta^N_{0,\kappa}, \eta^N_{\star,\kappa}, \rho^{N,i}_{\kappa}, \bT^{N,i}_{\kappa})$, $i=1,2$, respectively.
Thus, the difference 
\begin{align*}
(\eta^{N,12}_\kappa\,,\, \bu^{N,12}_\kappa)
:=
(\eta^{N,1}_\kappa-\eta^{N,2}_\kappa\, ,\,\bu^{N,1}_\kappa - \bu^{N,2}_\kappa)
\end{align*}
solves  
\begin{align}
\partial_t^2 \eta^{N,12}_\kappa - \partial_t\partial_y^2 \eta^{N,12}_\kappa + \partial_y^4 \eta^{N,12}_\kappa = - \bn^\top \mathbb{S}^{N,12}_\kappa\circ \bm{\varphi}_{\zeta_\kappa}\bn_{\zeta_\kappa} \det(\partial_y\bm{\varphi}_{\zeta_\kappa})
\end{align}
in $I_* \times \omega$ and 
\begin{equation}
\begin{aligned}
\partial_t \bu^{N,12}_\kappa  + (\bv_\kappa\cdot \nabx)\bu^{N,12}_\kappa
= 
\delx \bu^{N,12}_\kappa -\nabx p^{N,12}_\kappa 
+
\divx   \bT^{N,12}_\kappa
\end{aligned}
\end{equation}
in $I_* \times  \Omega_{\zeta_\kappa}$
where 
\begin{align*}
\mathbb{S}^{N,12}_\kappa=&(\nabx\bu^{N,12}_\kappa + \nabx(\bu^{N,12}_\kappa)^\top)-p^{N,12}_\kappa\mathbb{I}
+\bT^{N,12}_\kappa
\end{align*} 
with $p^{N,12}_\kappa:=p^{N,1}_\kappa-p^{N,2}_\kappa$ and $\bT^{N,12}_\kappa:=\bT^{N,1}_\kappa-\bT^{N,2}_\kappa$.
Similar to \eqref{enerFormalFluiStru}, we obtain
\begin{equation}
\begin{aligned} 
\label{energyEstAxxY} 
\sup_{t\in I_*} &\Big(\Vert \bu^{N,12}_\kappa(t)\Vert_{L^2(\Omega_{\zeta_\kappa})}^2 
+
\Vert\partial_t \eta^{N,12}_\kappa(t)\Vert_{L^2(\omega)}^2 
+ 
\Vert\partial_y^2\eta^{N,12}_\kappa(t)\Vert_{L^2(\omega)}^2
\Big) 
\\&+
 \int_{I_*}\Big(\Vert \nabx \bu^{N,12}_\kappa\Vert_{L^2(\Omega_{\zeta_\kappa})}^2+\Vert\partial_t\partial_y \eta^{N,12}_\kappa\Vert_{L^2(\omega)}^2\Big)\dt
 \\\lesssim&
T_*\sup_{t\in I_*}\Vert\bT^{N,12}_\kappa(t)\Vert_{L^2(\Omega_{\zeta_\kappa})}^2.
\end{aligned}
\end{equation} 
To obtain a contraction estimate for $\bT^{N}_\kappa$,
let us consider any two solution $(\rho^{N,i}_\kappa,\bT^{N,i}_\kappa)$, $i=1,2$ of the \eqref{rhoAlone}-\eqref{soluteAlone} with datasets $(\rho^N_{0,\kappa}, \bT^N_{0,\kappa},\overline{\eta^{N,i}_\kappa},\overline{\bu^{N,i}_\kappa})$, $i=1,2$, respectively, so that the difference $\rho^{N,12}_\kappa:=\rho^{N,1}_\kappa-\rho^{N,2}_\kappa$ and $\bT^{N,12}_\kappa:=\bT^{N,1}_\kappa-\bT^{N,2}_\kappa$ solves
\begin{align}  
\partial_t \rho^{N,12}_\kappa+ (\bv_\kappa\cdot \nabx) \rho^{N,12}_\kappa
= 
\Delx \rho^{N,12}_\kappa 
%-(\overline{\bu^{N,2}_\kappa}\cdot \nabx) \rho^{N,12}_\kappa
\label{rhoAloneDiff}
\end{align}
and
\begin{equation}
\begin{aligned}
\partial_t \bT^{N,12}_\kappa + (\bv_\kappa \cdot \nabx) \bT^{N,12}_\kappa
&=
(\nabx \overline{\bu^{N,12}_\kappa})\bT^{N,1}_\kappa + \bT^{N,1}_\kappa(\nabx \overline{\bu^{N,12}_\kappa})^\top 
\\&- 2(\bT^{N,12}_\kappa - \rho^{N,12}_\kappa \mathbb{I})+\Delx \bT^{N,12}_\kappa
%- (\overline{\bu^{N,2}_\kappa}\cdot \nabx) \bT^{N,12}_\kappa
\\&
+(\nabx \overline{\bu^{N,2}_\kappa})\bT^{N,12}_\kappa + \bT^{N,12}_\kappa(\nabx \overline{\bu^{N,2}_\kappa})^\top 
 \label{soluteAloneDiff}
\end{aligned} 
\end{equation}
in $I_* \times  \Omega_{\zeta_\kappa}$.  
We now  test \eqref{rhoAloneDiff} with $\rho^{N,12}_\kappa$ and use the boundary condition \eqref{bddCondSolvRhoAlone} which leads to
\begin{equation}
\begin{aligned}  
\label{weakRhoEqxy}
\sup_{t\in I_*} 
\Vert\rho^{N,12}_\kappa(t)\Vert_{L^2(\Omega_{\zeta_\kappa})}^2 
+
\int_{I_*} \Vert \nabx\rho^{N,12}_\kappa \Vert_{L^{2}(\Omega_{\zeta_\kappa})}^2   \dt
=0
%\leq
%\delta
%e^{cT_*} 
%\sup_{t\in I_*} 
%\Vert
%\partial_{t}\overline{\eta^{N,12}_\kappa} \Vert_{L^2(\omega)}^2.
\end{aligned}
\end{equation}
%holds for any  $\delta>0$. 
Next we test \eqref{soluteAloneDiff} with $\bT^{N,12}_\kappa$. 
%By using a similar argument as in \eqref{weakRhoEqxy00} and \eqref{weakRhoEqxy010}, 
We obtain
\begin{equation}  
\begin{aligned}
\label{strgEstFP1x}
\Vert\bT^{N,12}_\kappa(t)&\Vert_{L^2(\Omega_{\zeta_\kappa})}^2
+
\int_0^t \Vert\bT^{N,12}_\kappa\Vert_{W^{1,2}(\Omega_{\zeta_\kappa})}^2\dt'
\\
\lesssim& 
T_* \Vert\rho^{N,12}_\kappa(t) \Vert_{L^2(\Omega_{\zeta_\kappa})}^2
+
\int_0^t \Vert\bT^{N,12}_\kappa\Vert_{L^2(\Omega_{\zeta_\kappa})}^2\dt'
%\\&
%+
%\delta
%\int_0^t
%\Vert
%\partial_{t'}\overline{\eta^{N,12}_\kappa} \Vert_{L^2(\omega)}^2
%\Vert \bT^{N,1}_\kappa \Vert_{W^{1,2}( \Omega_{\zeta_\kappa})}^2\dt'
%\\&+
%\int_0^t
%\big(\Vert\partial_{t'}\zeta_\kappa\Vert_{L^2(\omega)}^4
%+
%\Vert
%\partial_{t'}\overline{\eta^{N,2}_\kappa} \Vert_{L^2(\omega)}^4\big)
%\Vert \bT^{N,12}_\kappa \Vert_{L^2( \Omega_{\zeta_\kappa})}^2
%\dt'
\\&
+\int_0^t\int_{\Omega_{\zeta_\kappa}}\big(\vert \nabx \overline{\bu^{N,12}_\kappa}\vert\,\vert\bT^{N,1}_\kappa \vert
+
\vert \nabx \overline{\bu^{N,2}_\kappa}\vert\,\vert\bT^{N,12}_\kappa \vert
\big) \,\vert\bT^{N,12}_\kappa \vert\dx\dt'
\end{aligned}
\end{equation}
for all $t\in I_*$ whereas by using Ladyzhenskaya inequality, we find that the estimate
\begin{align*}
\int_0^t\int_{\Omega_{\zeta_\kappa}}& \vert \nabx \overline{\bu^{N,12}_\kappa}\vert\,\vert\bT^{N,1}_\kappa \vert
\,\vert\bT^{N,12}_\kappa \vert\dx\dt'
\\&
\lesssim
\int_0^t\Vert\bT^{N,1}_\kappa \Vert_{L^4(\Omega_{\zeta_\kappa})} 
\Vert\bT^{N,12}_\kappa \Vert_{L^4(\Omega_{\zeta_\kappa})}
\Vert \nabx \overline{\bu^{N,12}_\kappa}\Vert_{L^2(\Omega_{\zeta_\kappa})}
\dt'
%\\&
%\leq \delta
%\int_0^t \Vert \nabx\bT^{N,12}_\kappa \Vert_{L^2(\Omega_{\zeta_\kappa})}^2\dt'
%\\&
%+
%\int_0^t\Vert\bT^{N,1}_\kappa \Vert_{L^2(\Omega_{\zeta_\kappa})}\Vert\nabx\bT^{N,1}_\kappa \Vert_{L^2(\Omega_{\zeta_\kappa})}
%\Vert\bT^{N,12}_\kappa \Vert_{L^2(\Omega_{\zeta_\kappa})}
%\Vert \nabx \overline{\bu^{N,12}_\kappa}\Vert_{L^2(\Omega_{\zeta_\kappa})}
%\dt'
\\&
\leq \delta 
\int_0^t
\Vert  \bT^{N,12}_\kappa \Vert_{W^{1,2}(\Omega_{\zeta_\kappa})}^2\dt'
+
\delta
\int_0^t 
\Vert\bT^{N,1}_\kappa \Vert_{L^2(\Omega_{\zeta_\kappa})}
\Vert \nabx \overline{\bu^{N,12}_\kappa}\Vert_{L^2(\Omega_{\zeta_\kappa})}^2\dt
\\&
+
c
\int_0^t
\Vert \bT^{N,1}_\kappa \Vert_{W^{1,2}(\Omega_{\zeta_\kappa})}^2
\Vert\bT^{N,12}_\kappa \Vert_{L^2(\Omega_{\zeta_\kappa})}^2
\dt'
\end{align*}
holds for any $\delta>0$ and
\begin{align*}
\int_0^t\int_{\Omega_{\zeta_\kappa}}& \vert \nabx \overline{\bu^{N,2}_\kappa}\vert
\,\vert\bT^{N,12}_\kappa \vert^2 \dx\dt'
\lesssim
\int_0^t \Vert \nabx \overline{\bu^{N,2}_\kappa}\Vert_{L^2(\Omega_{\zeta_\kappa})}
\Vert \bT^{N,12}_\kappa\Vert_{L^4(\Omega_{\zeta_\kappa})}^2\dt'
%\\&
%\lesssim
%\int_0^t \Vert \nabx \overline{\bu^{N,2}_\kappa}\Vert_{L^2(\Omega_{\zeta_\kappa})}
%\Vert \bT^{N,12}_\kappa\Vert_{L^2(\Omega_{\zeta_\kappa})}
%\Vert \nabx \bT^{N,12}_\kappa\Vert_{L^2(\Omega_{\zeta_\kappa})} \dt'
\\&
\leq \delta
\int_0^t 
\Vert   \bT^{N,12}_\kappa\Vert_{W^{1,2}(\Omega_{\zeta_\kappa})}^2 \dt'
+
c\int_0^t
\Vert \nabx \overline{\bu^{N,2}_\kappa}\Vert_{L^2(\Omega_{\zeta_\kappa})}^2
\Vert \bT^{N,12}_\kappa\Vert_{L^2(\Omega_{\zeta_\kappa})}^2\dt'.
\end{align*} 
If we now use the regularity of $ \bT^{N,1}_\kappa , \overline{\bu^{N,2}_\kappa}$
 %and the smoothness of $\zeta_\kappa$ 
 as well as \eqref{weakRhoEqxy}, we obtain from Gr\"onwall's lemma that
\begin{equation}
\begin{aligned}
\label{strgEstFP1xy}
\sup_{t\in I_*}\Vert\bT^{N,12}_\kappa(t)\Vert_{L^2(\Omega_{\zeta_\kappa})}^2
& 
+
\int_{I_*} \Vert\bT^{N,12}_\kappa\Vert_{W^{1,2}(\Omega_{\zeta_\kappa})}^2\dt 
\\&
\leq  
\delta
e^{cT_*}
\int_{I_*}
  \Vert\nabx \overline{\bu^{N,12}_\kappa}\Vert_{L^2(\Omega_{\zeta_\kappa})}^2
 \dt.
\end{aligned}
\end{equation}
By combining \eqref{energyEstAxxY} and \eqref{strgEstFP1xy}, we obtain
\begin{align*}
\Vert \mathtt{T}(\overline{\eta^N_\kappa},\overline{\bu^N_\kappa}) \Vert_{X^{I_*}}^2 =
\Vert (\eta^N_\kappa, \bu^N_\kappa) \Vert_{X^{I_*}}^2 
\leq
c\delta T_*
e^{cT_*}\Vert (\overline{\eta^N_\kappa},\overline{\bu^N_\kappa}) \Vert_{X^{I_*}}^2 
\leq
\tfrac{1}{2}\Vert (\overline{\eta^N_\kappa},\overline{\bu^N_\kappa}) \Vert_{X^{I_*}}^2 
\end{align*}
for the choice of $T_*$ such that $2T_*e^{cT_*}\leq(c
\delta)^{-1}$. This completes the proof of the existence of a unique local-in-time weak solution for the fully coupled finite-dimensional system \eqref{galerkinweak1}-\eqref{soluteAlone}. The fact that this solution is global follows from the energy estimate. Similar to  \eqref{apriori1}, we obtain 
\begin{equation}
\begin{aligned} 
\label{apriori1xReg}
\sup_{t\in I}&
\bigg(
\int_{\Omega_{\zeta_\kappa}}\mathrm{tr}(\bT^N_\kappa(t))\dx
+
\Vert \bu^N_\kappa(t)\Vert_{L^2(\Omega_{\zeta_\kappa})}^2 
+
 \Vert\partial_t \eta^N_\kappa(t)\Vert_{L^2(\omega)}^2 
+ 
\Vert\partial_y^2\eta^N_\kappa(t)\Vert_{L^2(\omega)}^2
\bigg)
\\&+
\int_I
\int_{\Omega_{\zeta_\kappa}}\mathrm{tr}(\bT^N_\kappa)\dx\dt
+
\int_I\Vert \nabx \bu^N_\kappa \Vert_{L^2(\Omega_{\zeta_\kappa})}^2\dt
+
\int_I\Vert\partial_t\partial_y \eta^N_\kappa \Vert_{L^2(\omega)}^2\dt
\\&\lesssim
\int_{\Omega_{\zeta_\kappa(0)}}\mathrm{tr}(\bT^N_{0,\kappa})\dx
+
\Vert \bu^N_{0,\kappa} \Vert_{L^2(\Omega_{\zeta_\kappa(0)})}^2 
+
 \Vert\eta^N_{\star,\kappa}\Vert_{L^2(\omega)}^2 
+ 
\Vert\partial_y^2\eta^N_{0,\kappa}\Vert_{L^2(\omega)}^2
\\&+
T
\int_{\Omega_{\zeta_\kappa(0)}}\rho^N_{0,\kappa} \dx
+
\int_I\Vert \bff^N_\kappa\Vert_{L^2(\Omega_{\zeta_\kappa})}^2\dt
+
\int_I\Vert g^N_\kappa\Vert_{L^2(\omega)}^2\dt
\end{aligned}
\end{equation}
and similar to \eqref{strgEstFP3}, we obtain
\begin{equation}
\begin{aligned} 
\label{strgEstFP3xReg}
\sup_{t\in I}&
\big(\Vert\rho^N_\kappa(t)\Vert_{L^2(\Omega_{\zeta_\kappa})}^2
+
\Vert\bT^N_\kappa(t)\Vert_{L^2(\Omega_{\zeta_\kappa})}^2
\big)
\\&+
\int_I\big(\Vert  \rho^N_\kappa \Vert_{W^{1,2}(\Omega_{\zeta_\kappa})}^2
+
\Vert  \bT^N_\kappa \Vert_{W^{1,2}(\Omega_{\zeta_\kappa})}^2\big)\dt 
\\
\lesssim&
\Vert\rho^N_{0,\kappa} \Vert_{L^2(\Omega_{\zeta_\kappa(0)})}^2
+ 
\Vert\bT^N_{0,\kappa}\Vert_{L^2(\Omega_{\zeta_\kappa(0)})}^2\exp\bigg(
c
\int_I
\Vert\nabx \bu^N_\kappa\Vert_{L^2(\Omega_{\zeta_\kappa})}^2 \dt\bigg)
\\&+
T \Vert\rho^N_{0,\kappa}  \Vert_{L^2(\Omega_{\zeta_\kappa(0)})}^2 
\exp\bigg(
c
\int_I
\Vert\nabx \bu^N_\kappa\Vert_{L^2(\Omega_{\zeta_\kappa})}^2 \dt\bigg).
\end{aligned}
\end{equation}
By using the boundedness of the initial conditions, it follows from \eqref{apriori1xReg} and  \eqref{strgEstFP3xReg} that
\begin{align*}
\eta^N_\kappa \rightarrow \eta_\kappa
&\qquad\text{in}\qquad  \big(L^{\infty}(I;W^{2,2}(\omega)), w^*  \big),
\\
\partial_t\eta^N_\kappa \rightarrow \partial_t\eta_\kappa
&\qquad\text{in}\qquad
\big(L^{\infty}(I;L^{2}(\omega)),w^*  \big)  \cap \big(L^{2}(I;W^{1,2}(\omega)),w  \big),
\\
\bu^N_\kappa \rightarrow \bu_\kappa
&\qquad\text{in}\qquad
\big(L^{\infty} \big(I; L^{2}(\Omega_{\zeta_\kappa}) \big),w^*\big)\cap \big(L^2\big(I;W^{1,2}_{\divx}(\Omega_{\zeta_\kappa})  \big), w\big),
\\
\rho^N_\kappa \rightarrow \rho_\kappa
&\qquad\text{in}\qquad
\big(L^{\infty} \big(I; L^{2}(\Omega_{\zeta_\kappa}) \big),w^*\big)\cap \big(L^2\big(I;W^{1,2}(\Omega_{\zeta_\kappa})  \big), w\big),
\\
\bT^N_\kappa \rightarrow \bT_\kappa
&\qquad\text{in}\qquad
\big(L^{\infty} \big(I; L^{2}(\Omega_{\zeta_\kappa}) \big),w^*\big)\cap \big(L^2\big(I;W^{1,2}(\Omega_{\zeta_\kappa})  \big), w\big).
\end{align*}
Furthermore, by using a density argument and lower semi-continuity of norms, the convergences above offers all the ingredients to pass to the limit in \eqref{galerkinweak1}, \eqref{weakRhoNKappa}, \eqref{weakFokkerPlanckNKappa}, \eqref{apriori1xReg} and \eqref{strgEstFP3xReg}  to complete the proof of Theorem \ref{thm:linReg}.

\subsection{The regularised fully-coupled system}
In the previous section, we constructed a weak solution $(\eta_\kappa, \bu_\kappa, \rho_\kappa,\bT_\kappa)$ to the linearised system posed on the given regularised spacetime geometry $I\times \Omega_{\zeta_\kappa}$. In this section, we are going to use a fixed-point argument to show that $(\eta_\kappa, \bu_\kappa, \rho_\kappa,\bT_\kappa)$ actually solves the fully-coupled nonlinear system posed on the unknown regularized geometry, i.e.
\begin{align}
\divx \bu_\kappa=0, \label{divfreekNon}
\\
\partial_t \rho_\kappa+ (\bu_\kappa\cdot \nabx) \rho_\kappa
= 
\Delx \rho_\kappa 
,\label{rhoEqukNon}
\\
\partial_t \bu_\kappa  + (\bu_\kappa\cdot \nabx)\bu_\kappa
= 
\delx \bu_\kappa -\nabx p_\kappa
+\bff_\kappa
+
\divx   \bT_\kappa, \label{momEqukNon}
\\
 \partial_t^2 \eta_\kappa -\partial_t\partial_y^2 \eta_\kappa +  \partial_y^4 \eta_\kappa = g_\kappa - ( \mathbb{S}_\kappa\bn_{\eta_\kappa})\circ \bm{\varphi}_{\eta_\kappa}\cdot \bn \,\det(\partial_y\bm{\varphi}_{\eta_\kappa}) , \label{shellEqukNon}
\\
\partial_t \bT_\kappa + (\mathbf{u}_\kappa\cdot \nabx) \bT_\kappa
=
(\nabx \bu_\kappa)\bT_\kappa + \bT_\kappa(\nabx \bu_\kappa)^\top - 2(\bT_\kappa - \rho_\kappa \mathbb{I})+\Delx \bT_\kappa \label{solutekNon}
\end{align}
on $I\times\Omega_{\eta_\kappa}\subset \mathbb R^{1+2}$ where
\begin{align*}
\mathbb{S}_\kappa=  (\nabx \bu_\kappa +(\nabx \bu_\kappa)^\top) -p_\kappa\mathbb{I}+  \bT_\kappa.
\end{align*}
A weak solution of \eqref{divfreekNon}-\eqref{solutekNon} is defined in analogy to Definition \ref{def:weaksolmartFPReg}. Our main result now reads. 
 \begin{theorem}
\label{thm:linRegNon}
Let $\kappa>0$ be arbitrary.
For a dataset $(\bff_\kappa, g_\kappa, \rho_{0,\kappa}, \bT_{0,\kappa}, \bu_{0,\kappa}, \eta_{0,\kappa}, \eta_{\star,\kappa})$
that satisfies \eqref{mainDataForAllReg}, there  exists a  %global  
weak solution $(\eta_\kappa, \bu_\kappa, \rho_\kappa,\bT_\kappa)$ of   \eqref{divfreekNon}--\eqref{solutekNon}.
\end{theorem}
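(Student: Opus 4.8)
The plan is to remove the artificial decoupling of the geometry and the advecting velocity via a Schauder-type fixed-point argument. For $\kappa>0$ fixed, introduce the space
\[
Y^I:=C(\overline I\times\omega)\times L^2_{\divx}(I\times\mathbb R^2),
\]
and for a pair $(\zeta,\bv)$ with $\zeta(0,\cdot)=\eta_{0,\kappa}$, $\|\zeta\|_{L^\infty(I\times\omega)}<L$, and $\bv\circ\bm\varphi_{\zeta_\kappa}=\bn\partial_t\zeta_\kappa$ on $I\times\omega$, let $(\zeta_\kappa,\bv_\kappa):=(\mathcal R_\kappa\zeta,\mathcal R_\kappa\bv)$ be the spatial regularisation. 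Theorem \ref{thm:linReg} then produces a weak solution $(\eta_\kappa,\bu_\kappa,\rho_\kappa,\bT_\kappa)$ of \eqref{divfreek}--\eqref{solutek} on $I\times\Omega_{\zeta_\kappa}$. Define the solution map
\[
\mathscr S:(\zeta,\bv)\longmapsto(\eta_\kappa,\bu_\kappa\circ\bm\Psi_{\eta_\kappa-\zeta}^{\,-1}),
\]
i.e. we transport $\bu_\kappa$ back onto $\Omega_{\zeta}$ (or, more conveniently, extend $\bu_\kappa$ by the structure-velocity off $\Omega_{\zeta_\kappa}$ as in \cite{lengeler2014weak}) so that the image lives in the same space $Y^I$. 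A fixed point of $\mathscr S$ satisfies $\eta_\kappa=\zeta$ and $\bu_\kappa=\bv$, hence solves the nonlinear system \eqref{divfreekNon}--\eqref{solutekNon} on the geometry $I\times\Omega_{\eta_\kappa}$ (note that since now $\zeta=\eta_\kappa$ and $\bv=\bu_\kappa$ the regularisation in the advection and in the geometry is still present — this is the \emph{regularised} system, consistent with the statement, where $\mathbb S_\kappa$ has unit coefficients and the convective term uses $\bu_\kappa$ directly).

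The verification of the Schauder hypotheses proceeds as follows. \emph{(i) Self-map onto a convex compact set.} The a priori bounds \eqref{apriori1xReg} and \eqref{strgEstFP3xReg}, which are inherited by $(\eta_\kappa,\bu_\kappa,\rho_\kappa,\bT_\kappa)$ from Theorem \ref{thm:linReg}, are uniform in $(\zeta,\bv)$ because the right-hand sides depend only on the (fixed) dataset. This pins $(\eta_\kappa,\bu_\kappa)$ in the bounded set
\[
\eta_\kappa\in W^{1,\infty}(I;L^2(\omega))\cap W^{1,2}(I;W^{1,2}(\omega))\cap L^\infty(I;W^{2,2}(\omega)),\qquad
\bu_\kappa\in L^\infty(I;L^2)\cap L^2(I;W^{1,2}),
\]
and one checks $\|\eta_\kappa\|_{L^\infty(I_{**}\times\omega)}<L$ on a short interval $I_{**}$ using $\eta_\kappa(0)=\eta_{0,\kappa}$ and the $W^{1,\infty}(I;L^2)$-bound together with the $L^\infty(I;W^{2,2})$-bound and interpolation; a continuation argument extends this to all of $I$ since the energy stays bounded (no degeneracy). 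Hence $\mathscr S$ maps a closed convex bounded subset $B\subset Y^I$ into itself. \emph{(ii) Compactness.} Here the key input is the Aubin--Lions--type result \cite[Theorem 5.1]{muha2019existence} adapted to moving domains: combining the uniform bound on $\bu_\kappa$ in $L^2(I;W^{1,2})$ with a uniform bound on the distributional time-derivative obtained from the weak formulation \eqref{weakFluidStrutReg} (the regularisation makes every term on the right-hand side controllable), one gets strong convergence of $\bu_\kappa$ in $L^2(I;L^2_{\mathrm{loc}})$; similarly $\eta_\kappa$ is compact in, say, $C(\overline I;W^{2-\varepsilon,2}(\omega))\cap C^1(\overline I\times\omega)$ by the $W^{1,\infty}(I;L^2)\cap L^\infty(I;W^{2,2})$ bound and Arzelà--Ascoli. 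Thus $\mathscr S(B)$ is relatively compact in $Y^I$. \emph{(iii) Continuity.} If $(\zeta^m,\bv^m)\to(\zeta,\bv)$ in $Y^I$, then $(\zeta^m_\kappa,\bv^m_\kappa)\to(\zeta_\kappa,\bv_\kappa)$ in every smooth norm because $\mathcal R_\kappa$ is a fixed mollifier; using the geometry estimates \eqref{210and212}--\eqref{218}, the Piola-transform continuity, and the uniform bounds, one passes to the limit in \eqref{weakRhoEqReg}, \eqref{weakFokkerPlanckEqKappa}, \eqref{weakFluidStrutReg} to identify the limit as the solution associated to $(\zeta,\bv)$; uniqueness of the \emph{linear} regularised problem (which was part of Theorem \ref{thm:linReg}) then forces $\mathscr S(\zeta^m,\bv^m)\to\mathscr S(\zeta,\bv)$, i.e. $\mathscr S$ is continuous. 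The Schauder fixed point theorem now yields the desired weak solution.

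I expect the \textbf{main obstacle} to be step (ii)--(iii) together: proving the compactness and continuity of $\mathscr S$ in the presence of the \emph{moving, geometry-dependent} function spaces. The delicate points are (a) that the target domain $\Omega_{\eta_\kappa}$ is itself produced by the map, so one must either pull everything back to the fixed reference domain via $\bm\Psi_{\eta_\kappa}^{-1}$ — using \eqref{210and212}--\eqref{218} to control the transformed equations uniformly — or work with the extension-by-structure-velocity trick of \cite{lengeler2014weak}; (b) that the Aubin--Lions compactness on time-varying domains, \cite[Theorem 5.1]{muha2019existence}, must be invoked with care about the uniformity of the embedding constants in $\kappa$-independent but geometry-dependent norms (this is fine because $\|\eta_\kappa\|_{L^\infty}<L$ keeps the Jacobians bounded away from zero); and (c) the coupling through the interface condition $\bv_\kappa\circ\bm\varphi_{\zeta_\kappa}=\bn\partial_t\zeta_\kappa$, which must be shown to be preserved in the limit — this follows from the trace continuity of the transformed maps and the convergence $\partial_t\zeta^m_\kappa\to\partial_t\zeta_\kappa$ in $C(\overline I\times\omega)$. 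Once the abstract scheme is in place, the passage to the limit in the nonlinear terms (the convective term $(\bu_\kappa\cdot\nabx)\bu_\kappa$ and $(\nabx\bu_\kappa)\bT_\kappa$) uses precisely the strong $L^2$-convergence of $\bu_\kappa$ secured by the compactness step.
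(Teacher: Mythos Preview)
Your overall strategy---a Schauder-type fixed point on the pair $(\zeta,\bv)$ with compactness supplied by the moving-domain Aubin--Lions result of \cite{muha2019existence}---matches the paper's, and your identification of the compactness ingredients (Aubin--Lions for $\eta_\kappa^n$ in $C(\overline I\times\omega)$, \cite[Theorem~5.1]{muha2019existence} for $\bu_\kappa^n$) is accurate. There is, however, one genuine gap. In step~(iii) you invoke ``uniqueness of the \emph{linear} regularised problem (which was part of Theorem~\ref{thm:linReg})'' to make $\mathscr S$ single-valued and continuous. But Theorem~\ref{thm:linReg} asserts only \emph{existence}; the regularised system \eqref{divfreek}--\eqref{solutek} still contains the bilinear coupling $(\nabla_\bx\bu_\kappa)\bT_\kappa$ between the unknowns, and uniqueness of its weak solutions is neither stated nor proved. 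Without it, $\mathscr S$ is a priori set-valued and the classical Schauder theorem does not apply.

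The paper sidesteps exactly this difficulty by using instead a fixed-point theorem for \emph{set-valued} maps, \cite[Theorem~A.4]{lengeler2014weak}, writing $\mathtt T:B_R^X\to 2^{B_R^X}$ with $X=C(\overline I_*\times\omega)\times L^2(I_*;L^2(\Omega\cup S_\alpha))$. In that framework the critical hypothesis is not continuity but compactness: for any sequence $(\zeta_\kappa^n,\bv_\kappa^n)\in B_R^X$ and any selection $(\eta_\kappa^n,\bu_\kappa^n)\in\mathtt T(\zeta_\kappa^n,\bv_\kappa^n)$, some subsequence of the images converges in $X$. This is precisely what the Aubin--Lions arguments you outline deliver, so all of your compactness analysis transfers unchanged---you simply need to replace Schauder by the set-valued version and drop the continuity/uniqueness step altogether. (Incidentally, the paper also handles the variable domain by working with $\mathbb I_{\Omega_{\zeta_\kappa^n}}\bu_\kappa^n$ in $L^2(\Omega\cup S_\alpha)$ rather than composing with $\bm\Psi_{\eta_\kappa-\zeta}^{-1}$, which avoids the transformation-regularity issues you flag in point~(a) of your obstacle discussion.)
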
 
\begin{proof}
We note that the only differences between \eqref{divfreek}-\eqref{solutek} and the anticipated system \eqref{divfreekNon}-\eqref{solutekNon} consists of the linearisation
% in the momentum equation \eqref{momEquk} 
by the given velocity $\bv_\kappa$ (rather than $\bu_\kappa$) in the advection term, the stress tensor term on the right-hand side of \eqref{shellEquk} being transformed by a coordinate transform with respect of $\zeta_\kappa$ (rather than $\eta_\kappa$), and finally the full system posed on $\Omega_{\zeta_\kappa}$ (rather than $\Omega_{\eta_\kappa}$). The proof of Theorem \ref{thm:linRegNon} therefore follows from the construction of a fixed point $\mathtt{T}( \zeta_\epsilon, \bv_\epsilon)=(\eta_\epsilon,\bu_\epsilon)$ of a certain solution map $\mathtt{T}$. Unfortunately, since we are dealing with weak solutions and the anticipated system \eqref{divfreekNon}-\eqref{solutekNon} is nonlinear, we are unable to use a Banach fixed-point-type argument as we did in the previous section. Consequently, we resort to a fixed-point theorem for set-valued mappings which gives the existence (but not uniqueness) of a fixed-point  \cite[Theorem A.4]{lengeler2014weak}. 
For this, with a slight abuse of notation, we (still) consider the interval $I_*:=(0,T_*)$ where $T_*$ is to be chosen later. For a tubular neighbourhood $S_\alpha$ of $\partial\Omega$ with $\alpha\leq L$, we set
\begin{align*}
X:=C(\overline{I}_* \times \omega ) \otimes L^2(I_*;L^2(\Omega\cup S_\alpha))
\end{align*}
and define the ball
\begin{align*}
B_R^X=\big\{ ( {\zeta}_\kappa, { {\bv}}_\kappa)\in X \,:\, {\zeta}_\kappa(0)= {\eta}_{0,\kappa}, \quad   \Vert ( {\zeta}_\kappa, { {\bv}}_\kappa)\Vert_{X}\leq R \big\}
\end{align*}
for $R>0$ large enough and for fixed $\kappa>0$. Now let us consider the solution map $\mathtt{T}:B_R^X \subset X  \rightarrow 2^{B_R^X}$
defined by $\mathtt{T}( {\zeta}_\kappa, { {\bv}}_\kappa)=( {\eta}_\kappa, { {\bu}}_\kappa)$. The critical requirement for a fixed point is to show compactness of the map $\mathtt{T}$. Thus, for a sequence $(\rho^n_\kappa,\bT^n_\kappa)$ satisfying  \eqref{rhoAlone}-\eqref{soluteAlone}, we consider any sequence $( {\zeta}_\kappa^n, { {\bv}}_\kappa^n)_{n\in \mathbb{N}}\in B_R^X$ with $\mathtt{T}( {\zeta}_\kappa^n, { {\bv}}_\kappa^n)=( {\eta}_\kappa^n, { {\bu}}_\kappa^n)$ (where the existence of such a solution map is guaranteed by  \eqref{galerkinweak1} and \eqref{eneryEstSolvStruAlone}). Consequently, we have in particular that 
\begin{equation}
\begin{aligned} 
\label{eneryEstSolvStruAlonesmallN}
\sup_{t\in I}\Big(
\Vert\partial_t \eta^n_\kappa\Vert_{L^2(\omega)}^2 
+  
\Vert\partial_y^2\eta^n_\kappa\Vert_{L^2(\omega)}^2
\Big) 
\lesssim 1
\end{aligned}
\end{equation}
uniformly in $n\in \mathbb{N}$. Given that the embedding $W^{2,2}(\omega)\hookrightarrow C(\omega)$ is compact and the embedding $C(\omega) \hookrightarrow L^2(\omega)$ is continuous, it follows from Aubin-Lions lemma that
\begin{align*}
\eta^n_\kappa \rightarrow \eta_\kappa
\qquad\text{in}\qquad C(\overline{I}_*\times \omega).
\end{align*}
Also, just as in  \cite[Lemma 6.3]{muha2019existence},
we can use a reformulated Aubin-Lions lemma \cite[Theorem 5.1]{muha2019existence} and the existence of a smooth solenoidal extension operator \cite[Corollary 3.4]{muha2019existence} to also obtain
\begin{align}
 \partial_t\eta^n_\kappa \rightarrow  \partial_t\eta_\kappa
&\qquad\text{in}\qquad L^2(I_*;L^2( \omega)),\label{strongCon1}
\\
  \mathbb{I}_{\Omega_{\zeta^n_\kappa}} \bu^n_\kappa \rightarrow 
   \mathbb{I}_{\Omega_{\zeta_\kappa}}  \bu_\kappa
&\qquad\text{in}\qquad L^2(I_*;L^2(\Omega\cup S_\alpha))
\label{strongCon2}
\end{align}
which finishes the proof of compactness of $\mathtt{T}$. Consequently,  the map
 $\mathtt{T}$ posses a fixed point, i.e., there exists $({\eta}_\kappa, { {\bu}}_\kappa) \in B_R^X$ with  $\mathtt{T}( {\eta}_\kappa, { {\bu}}_\kappa)=( {\eta}_\kappa, { {\bu}}_\kappa)$. The fact that the solution is global follows from  \eqref{apriori1}.  
\end{proof}
\begin{remark}
We remark that the now standard method  \cite[Lemma 6.3]{muha2019existence} for obtaining compactness for the velocity sequence has been adapted to various settings including a momentum equation with a forcing of divergence form \cite[Page 31]{breit2021incompressible} (just as we have in our present setting) and to stochastic models \cite[Page 24]{breit2023martingale}.
\end{remark}

\subsection{Limits of the regularised system}
\label{sec:limit}
We are now going to pass to the limit $\kappa\rightarrow\infty$ in the regularisation parameter to complete the proof of Theorem \ref{thm:main}. Due to Theorem \ref{thm:linRegNon} (and  \eqref{apriori1}), it follows that
\begin{align*}
\eta_\kappa \rightarrow \eta
&\qquad\text{in}\qquad  \big(L^{\infty}(I;W^{2,2}(\omega)), w^*  \big),
\\
\partial_t\eta_\kappa \rightarrow \partial_t\eta
&\qquad\text{in}\qquad
\big(L^{\infty}(I;L^{2}(\omega)),w^*  \big)  \cap \big(L^{2}(I;W^{1,2}(\omega)),w  \big),
\\
\mathbb{I}_{\Omega_{\eta_\kappa}} \bu_\kappa \rightarrow \mathbb{I}_{\Omega_{\eta}} \bu
&\qquad\text{in}\qquad
\big(L^{\infty} \big(I; L^{2}(\Omega\cup S_\alpha)\big),w^*\big)\cap \big(L^2\big(I;W^{1,2}_{\divx}(\Omega\cup S_\alpha) \big), w\big),
\\
\mathbb{I}_{\Omega_{\eta_\kappa}} \rho_\kappa \rightarrow \mathbb{I}_{\Omega_{\eta}} \rho
&\qquad\text{in}\qquad
\big(L^{\infty} \big(I; L^{2}(\Omega\cup S_\alpha)\big),w^*\big)\cap \big(L^2\big(I;W^{1,2}(\Omega\cup S_\alpha) \big), w\big),
\\
\mathbb{I}_{\Omega_{\eta_\kappa}} \bT_\kappa \rightarrow \mathbb{I}_{\Omega_{\eta}} \bT
&\qquad\text{in}\qquad
\big(L^{\infty} \big(I; L^{2}(\Omega\cup S_\alpha)\big),w^*\big)\cap \big(L^2\big(I;W^{1,2}(\Omega\cup S_\alpha) \big), w\big).
\end{align*}
Furthermore, just as in \eqref{strongCon1}-\eqref{strongCon2}, we also obtain
\begin{align}
 \partial_t\eta_\kappa \rightarrow  \partial_t\eta
&\qquad\text{in}\qquad L^2(I_*;L^2( \omega)),\label{strongCon1x}
\\
  \mathbb{I}_{\Omega_{\eta_\kappa}} \bu_\kappa \rightarrow 
   \mathbb{I}_{\Omega_{\eta}}  \bu
&\qquad\text{in}\qquad L^2(I_*;L^2(\Omega\cup S_\alpha)).
\label{strongCon2x}
\end{align}
The above convergence results allow us to pass to the limit in the weak formulation and the energy inequality.

\section{Strong solutions}
\label{sec:strongSol}
We now attend to the proof of Theorem \ref{thm:strongSol}.
Our strategy for constructing a solution also involves solving the solvent-structure subproblem and the solute subproblem independently of each other and using a fixed-point argument to get a local solution to the fully coupled system. The extension to a global solution will then follow from the  estimate \eqref{eq:thm:mainFP00}. 
\subsection{The solvent-structure subproblem}
In the following, for a given pair $(\underline{\rho}, \underline{\bT})$, given body forces $\bff$ and $g$, we wish to find a strong solution to the following system of equations
\begin{align}
\divx \bu=0, \label{divfreeAlone} 
\\
\partial_t \bu  + (\mathbf{u}\cdot \nabx)\mathbf{u} 
= 
 \delx \bu -\nabx p
+\bff
+
\divx   \underline{\bT}, 
\\
 \partial_t^2 \eta - \partial_t\partial_y^2 \eta + \partial_y^4 \eta = g - ( \mathbb{S}\bn_\eta )\circ \bm{\varphi}_\eta\cdot\bn \,\det(\partial_y\bm{\varphi}_\eta),\label{shellAlone}
\end{align}
defined on $I\times\Oeta\subset \mathbb R^{1+2}$ where
\begin{align*}
\mathbb{S}= (\nabx \bu +(\nabx \bu)^\top) -p\mathbb{I}+ \underline{\bT}.
\end{align*}
We complement \eqref{divfreeAlone}--\eqref{shellAlone} with the following initial and interface conditions
\begin{align}
&\eta(0,\cdot)=\eta_0(\cdot), \qquad\partial_t\eta(0,\cdot)=\eta_\star(\cdot) & \text{in }\omega,  
\\  
&\bu(0,\cdot)=\bu_0(\cdot) & \text{in }\Omega_{\eta_0}.
\\
\label{interfaceAlone}
&\bu\circ\bm{\varphi}_\eta=(\partial_t\eta)\bn & \text{on }I\times \omega.
\end{align}
The precise definition of a strong solution is given as follows.
\begin{definition}[Strong solution]
\label{def:strongSolutionAlone}
Let $(\bff, g, \eta_0, \eta_\star, \bu_0, \underline{\bT})$ be a dataset such that
\begin{equation}
\begin{aligned}
\label{datasetAlone}
&\bff \in L^2\big(I; L^2_{\mathrm{loc}}(\mathbb{R}^2 )\big),\quad
g \in L^2\big(I; L^{2}(\omega)\big), \quad
\eta_0 \in W^{3,2}(\omega) \text{ with } \Vert \eta_0 \Vert_{L^\infty( \omega)} < L, 
\\
&\eta_\star \in W^{1,2}(\omega), \quad
\underline{\bT}\in L^2(I;W^{1,2}_{\mathrm{loc}}(\R^2)), \\
&
\bu_0\in W^{1,2}_{\mathrm{\divx}}(\Omega_{\eta_0} ) \text{ is such that }\bu_0 \circ \bm{\varphi}_{\eta_0} =\eta_\star \bn \text{ on } \omega.
\end{aligned}
\end{equation} 
We call 
$( \eta, \bu,  p )$
a \textit{strong solution}  of  \eqref{divfreeAlone}--\eqref{interfaceAlone} with  data $(\bff, g, \eta_0, \eta_\star, \bu_0, \underline{\bT})$ provided that the following holds:
\begin{itemize}
\item[(a)]  the structure-function $\eta$ is such that $
\Vert \eta \Vert_{L^\infty(I \times \omega)} <L$ and
\begin{align*}
\eta \in &W^{1,\infty}\big(I;W^{1,2}(\omega)  \big)\cap L^{\infty}\big(I;W^{3,2}(\omega)  \big) \cap  W^{1,2}\big(I; W^{2,2}(\omega)  \big)
\\&\cap  W^{2,2}\big(I;L^{2}(\omega)  \big)
\cap
L^{2} (I;W^{4,2}(\omega )) ;
\end{align*}
\item[(b)] the velocity $\bu$ is such that $\bu  \circ \bm{\varphi}_{\eta} =(\partial_t\eta)\bn$ on $I\times \omega$ and
\begin{align*} 
\bu\in  W^{1,2} \big(I; L^2_{\divx}(\Oeta ) \big)\cap L^2\big(I;W^{2,2}(\Oeta)  \big);
\end{align*}
\item[(c)] the pressure $p$ is such that 
\begin{align*}
p\in L^2\big(I;W^{1,2}(\Oeta)  \big);
\end{align*}
\item[(d)] equations \eqref{divfreeAlone}--\eqref{shellAlone} are satisfied a.e. in spacetime with $\eta(0)=\eta_0$ and $\partial_t\eta=\eta_\star$ a.e. in $\omega$ as well as $\bfu(0)=\bfu_0$ a.e. in $\Omega_{\eta_0}$.
\end{itemize}
\end{definition}
\noindent 
The existence of a unique global-in-time strong solution to \eqref{divfreeAlone}--\eqref{interfaceAlone} in the sense of Definition \ref{def:strongSolutionAlone} is already shown in \cite[Theorem 2.5]{breit2022regularity} so we can proceed to the solute subproblem.   

\subsection{The solute subproblem}
For a known flexible domain $\Omega_\zeta$ and a known solenoidal vector field $\bv$, we aim in this section to construct a strong solution of the following solute subproblem:
\begin{align} 
\partial_t \rho+ (\bv\cdot \nabx) \rho
= 
\Delx \rho 
,\label{rhoEquAlone}
\\
\partial_t \bT + (\bv\cdot \nabx) \bT
= 
(\nabx \bv)\bT + \bT(\nabx \bv)^\top - 2(\bT - \rho \mathbb{I})+\Delx \bT \label{soluteSubPro}
\end{align}
on $I\times\Omega_\zeta\subset \mathbb R^{1+2}$ subject to the following initial and boundary conditions
\begin{align} 
&\rho(0,\cdot)=\rho_0(\cdot),\quad\bT(0,\cdot)=\bT_0(\cdot) &\text{in }\Omega_{\zeta(0)},
\label{initialCondSolvSubPro} 
\\
& 
\bn_{\zeta}\cdot\nabx\rho=0,\qquad
\bn_{\zeta}\cdot\nabx\bT=0 &\text{on }I\times\partial\Omega_{\zeta}.
\label{bddCondSolvAlone}
\end{align}
The two unknowns $\rho$ and $\bT$ for the solute component of the polymer fluid are related via the identities
\begin{align*} 
\bT(t, \bx)= \int_{B} f(t,\bx,\bq)\bq\otimes\bq \dq, 
\qquad
\rho(t, \bx)= \int_{B} f(t,\bx,\bq) \dq
\end{align*}
where $f$ solves \eqref{fokkerPlanck}.

Let us start with a precise definition of what we mean by a strong solution.  
\begin{definition}
%[Strong solution] 
\label{def:strsolmartFP}
Assume that  $(\rho_0,\bT_0, \bv, \zeta)$ satisfies
\begin{equation}
\begin{aligned}
\label{fokkerPlanckDataAlone}
&\rho_0,\bT_0\in  W^{1,2}( \Omega_{\zeta(0)}),
\\&
\rho_{0}\geq 0,\,\, \bT_{0}>0 \quad \text{a.e. in } \Omega_{\zeta(0)} ,
\\&
\bv\in W^{1,2} \big(I; L^2_{\divx}(\Omega_\zeta ) \big)\cap L^2\big(I;W^{2,2}(\Omega_\zeta )  \big),
\\
&\zeta\in W^{1,\infty}\big(I;W^{1,2}(\omega)  \big)\cap L^{\infty}\big(I;W^{3,2}(\omega)  \big) \cap  W^{1,2}\big(I; W^{2,2}(\omega)  \big)
\\&\qquad\cap  W^{2,2}\big(I;L^{2}(\omega)  \big)
\cap
L^{2} (I;W^{4,2}(\omega))
,
\\& \bv  \circ \bm{\varphi}_{\zeta} =(\partial_t\zeta)\bn
\quad \text{on }I \times \omega,  \quad\|\zeta\|_{L^\infty(I\times\omega)}<L.
\end{aligned}
\end{equation}
We call
$(\rho,\bT)$
a \textit{strong solution} of   \eqref{rhoEquAlone}-\eqref{bddCondSolvAlone} with dataset $(\rho_0,\bT_0, \bv, \zeta)$ if 
\begin{itemize}
\item[(a)] $(\rho,\bT)$ satisfies
\begin{align*}
\rho,\bT&\in   W^{1,2} \big(I; L^2(\Omega_\zeta ) \big)\cap L^2\big(I;W^{2,2}(\Omega_\zeta )  \big);
\end{align*}
\item[(b)] for all  $  \psi  \in C^\infty (\overline{I}\times \R^2)$, we have
\begin{equation}
\begin{aligned} 
\label{weakRhoEqAlone}
\int_I  \frac{\mathrm{d}}{\dt}
\int_{\Omega_\zeta } \rho\psi \dx \dt 
&=
\int_I
\int_{\Omega_\zeta }[\rho\partial_t\psi + (\rho\bv\cdot\nabx) \psi] \dx\dt
\\&-
\int_I\int_{\Omega_\zeta }\nabx \rho \cdot\nabx \psi  \dx\dt
\end{aligned}
\end{equation}
\item[(c)] for all  $  \mathbb{Y}  \in C^\infty (\overline{I}\times \R^2)$, we have
\begin{equation}
\begin{aligned}
\label{weakFokkerPlanckEqAlone}
\int_I  \frac{\mathrm{d}}{\dt}
\int_{\Omega_\zeta } \bT:\mathbb{Y} \dx \dt 
&=
\int_I
\int_{\Omega_{\zeta }}[\bT :\partial_t\mathbb{Y} + \bT:(\bv\cdot\nabx) \mathbb{Y}] \dx\dt
\\&+
\int_I\int_{\Omega_\zeta }
[(\nabx \bv )\bT  + \bT (\nabx \bv )^\top]:\mathbb{Y} \dx\dt
\\&
-2\int_I\int_{\Omega_\zeta }(\bT :\mathbb{Y}  - \rho  \mathrm{tr}(\mathbb{Y} ))\dx\dt
\\&-
\int_I\int_{\Omega_\zeta }\nabx \bT ::\nabx \mathbb{Y}  \dx\dt.
\end{aligned}
\end{equation}
\end{itemize}
\end{definition}
\noindent We now formulate our result on the existence of a  unique strong solution of \eqref{rhoEquAlone}-\eqref{bddCondSolvAlone}
\begin{theorem}\label{thm:mainFP}
Let  $(\rho_0,\bT_0, \bv, \zeta)$ satisfy  \eqref{fokkerPlanckDataAlone}.
Then there is a unique strong solution $(\rho,\bT)$ of  \eqref{rhoEquAlone}-\eqref{bddCondSolvAlone}, in the sense of Definition \ref{def:strsolmartFP},
 such that
\begin{equation}
\begin{aligned} 
\int_I\big(\Vert \partial_t \rho&\Vert_{L^2(\Ozeta)}^2
+
\Vert \partial_t \bT\Vert_{L^2(\Ozeta)}^2
\big)\dt
+
\sup_{t\in I} \big(\Vert \rho(t)\Vert_{W^{1,2}(\Ozeta)}^2
+
\Vert \bT(t)\Vert_{W^{1,2}(\Ozeta)}^2
\big)
\\&
+\int_I\big(\Vert  \rho\Vert_{W^{2,2}(\Ozeta)}
+
\Vert  \bT\Vert_{W^{2,2}(\Ozeta)}\big)
\\\lesssim
\Vert  \rho_0&\Vert_{W^{1,2}(\Omega_{\zeta(0)})}
+
\Vert  \bT_0\Vert_{W^{1,2}(\Omega_{\zeta(0)})}
+
\int_I\big(\Vert\partial_t\zeta  \Vert_{W^{2,2}(\omega)}^2+\Vert\bv\Vert_{W^{2,2}(\Ozeta)}^2\big)\dt
\\&
+
T
\big(
\Vert\bT_0\Vert_{L^2(\Omega_{\zeta(0)})}^2
+
T \Vert\rho_0  \Vert_{L^2(\Omega_{\zeta(0)})}^2
\big)
\exp\bigg(
c
\int_I
\Vert\nabx \bv\Vert_{L^2(\Ozeta)}^2 \dt\bigg).
\label{eq:thm:mainFP}
\end{aligned}
\end{equation} 
holds 
with a constant depending on the $L^\infty(I;W^{1,2}(\omega))$-norm of $\partial_t\zeta$ and the $L^\infty(I;W^{1,\infty}(\omega))$-norm of $\zeta$  but otherwise being independent of the data.
\end{theorem}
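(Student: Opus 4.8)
Since $\bv$ and $\zeta$ are data, the system \eqref{rhoEquAlone}--\eqref{soluteSubPro} is \emph{linear} in $(\rho,\bT)$ and, moreover, triangular: \eqref{rhoEquAlone} is a scalar linear advection--diffusion equation for $\rho$ alone, and once $\rho$ is known \eqref{soluteSubPro} is a linear advection--diffusion--reaction system for $\bT$ with source $2\rho\mathbb I$ and zeroth-order coefficients built from $\nabx\bv$. My plan is to build $\rho$ first, then $\bT$, upgrade both to the regularity in (a) using the estimates already carried out in the proof of Proposition \ref{prop:strongEst}, and finally deduce uniqueness from linearity. For the construction I would pull everything back to the fixed reference domain $\Omega$ via the Hanzawa transform $\bm{\Psi}_\zeta$: writing $\widetilde\rho=\rho\circ\bm{\Psi}_\zeta$ and using \eqref{fokkerPlanckDataAlone} together with the transform bounds \eqref{210and212}--\eqref{218} and the one-dimensional embedding $W^{3,2}(\omega)\hookrightarrow C^2(\omega)$, one checks that $\widetilde\rho$ solves a uniformly parabolic equation on $\Omega$ with Lipschitz-in-space coefficients coming from $\nabx\bm{\Psi}_\zeta$ and lower-order terms coming from $\partial_t\bm{\Psi}_\zeta$ and the transformed velocity, subject to a conormal boundary condition; likewise for $\widetilde\bT$. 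On $\Omega$ I would run the classical Galerkin method in a basis adapted to the conormal Laplacian, exactly as in Section \ref{sec:Galerkin} for the auxiliary pair $(\rho^N_\kappa,\bT^N_\kappa)$; the $L^2$ energy identities \eqref{weakRhoEqx} and \eqref{strgEstFP} (the latter closed by Grönwall) give uniform bounds in $L^\infty(I;L^2)\cap L^2(I;W^{1,2})$, hence, after passing to the limit, a weak solution in the sense of \eqref{weakRhoEqAlone}--\eqref{weakFokkerPlanckEqAlone}.

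\textbf{Higher regularity and the quantitative bound.} The estimate \eqref{eq:thm:mainFP} is exactly \eqref{eq:thm:mainFPxxx} of Proposition \ref{prop:strongEst} with $(\eta,\bu)$ replaced by $(\zeta,\bv)$, so I would obtain it by reproducing that computation at the Galerkin level — where every integration by parts and every application of Reynolds' transport theorem is legitimate — and then letting the discretisation parameter tend to infinity. Testing the $\rho$-equation with the Galerkin projection of $\Delx\rho$ gives \eqref{spaceRegRho0}: the interior term $I_1$ is absorbed using $\bv\in L^2(I;W^{2,2}(\Ozeta))$, and the interface term $I_2$ is handled by the trace--interpolation chain shown there, using $\zeta\in L^\infty(I;W^{1,\infty}(\omega))$ and $\partial_t\zeta\in L^\infty(I;W^{1,2}(\omega))$ — the power of $\Vert\partial_t\zeta\Vert_{L^\infty(I;W^{1,2}(\omega))}$ produced by Young's inequality is what is absorbed into the ``constant'' of the statement. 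Grönwall gives \eqref{spaceRegRho1}, testing with $\partial_t\rho$ gives \eqref{timeRegRho1}, and elliptic $W^{2,2}$-regularity for the conormal Laplacian on $\Ozeta$ — with constant depending only on the geometry, i.e.\ on $\Vert\zeta\Vert_{L^\infty(I;W^{1,\infty}(\omega))}$ — upgrades $\Delx\rho\in L^2(I;L^2)$ and $\rho\in L^\infty(I;W^{1,2})$ to $\rho\in L^2(I;W^{2,2})$. Repeating with $\Delx\bT$ and $\partial_t\bT$, where the reaction term $-2(\bT-\rho\mathbb I)$ and the stretching terms $(\nabx\bv)\bT+\bT(\nabx\bv)^\top$ now enter and are estimated just as $J_3,J_4$ in \eqref{spaceRegT0} via \eqref{strgEstFP3} and Ladyzhenskaya's inequality, yields \eqref{spaceRegT1} and \eqref{timeRegT1}. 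Summing the four estimates gives \eqref{eq:thm:mainFP}, which transfers to the limit $(\rho,\bT)$ by weak lower semicontinuity; in particular $(\rho,\bT)$ has the regularity in (a).

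\textbf{Uniqueness.} If $(\rho_1,\bT_1),(\rho_2,\bT_2)$ are two strong solutions with the same data, then by linearity the difference solves \eqref{rhoEquAlone}--\eqref{soluteSubPro} with zero initial data and homogeneous conormal conditions. Testing the $\rho$-difference with $\rho_1-\rho_2$ as in \eqref{weakRhoEqx} forces $\rho_1\equiv\rho_2$; then testing the $\bT$-difference with $\bT_1-\bT_2$ and applying Grönwall as in \eqref{strgEstFP} (with vanishing data) forces $\bT_1\equiv\bT_2$. These energy identities use only $\rho,\bT\in L^\infty(I;L^2)\cap L^2(I;W^{1,2})$ with $\partial_t\rho,\partial_t\bT\in L^2(I;L^2)$, which strong solutions possess, so uniqueness holds in the asserted class.

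\textbf{Expected main obstacle.} I expect the delicate point to be the rigorous handling of the boundary contributions $I_2$ and $J_2$ coming from the motion of $\partial\Ozeta$: they are not controlled by the basic a priori bounds and must be tamed by sharp trace interpolation — involving $\Vert\nabx\rho\Vert_{W^{3/4,2}(\Ozeta)}$, $\Vert\partial_t\zeta\Vert_{W^{5/4,2}(\omega)}$ and the like — combined with elliptic $W^{2,2}$-regularity on $\Ozeta$, all while keeping the constants dependent only on $\Vert\zeta\Vert_{L^\infty(I;W^{1,\infty}(\omega))}$ and $\Vert\partial_t\zeta\Vert_{L^\infty(I;W^{1,2}(\omega))}$ and not on the stronger norms of the data; this uniformity is precisely what makes \eqref{eq:thm:mainFP} usable in the fixed-point scheme of Section \ref{sec:strongSol}.
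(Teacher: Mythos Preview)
Your proposal is correct and follows essentially the same approach as the paper: the paper's proof simply observes that \eqref{rhoEquAlone}--\eqref{soluteSubPro} are dissipative and (bi)linear, so a strong solution is obtained as the limit of a Galerkin approximation, with the bound \eqref{eq:thm:mainFP} derived at the finite-dimensional level exactly as \eqref{eq:thm:mainFPxxx} in Proposition~\ref{prop:strongEst} and then passed to the limit. Your write-up is in fact more detailed than the paper's two-line sketch --- in particular your explicit treatment of uniqueness and of the boundary terms $I_2$, $J_2$ --- but the underlying strategy is identical.
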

\noindent Since \eqref{rhoEquAlone} and \eqref{soluteSubPro} are dissipative and bilinear,  a strong solution of  \eqref{rhoEquAlone}-\eqref{bddCondSolvAlone} is directly obtained by way of a limit to a Galerkin approximation. 
In particular, the bound \eqref{eq:thm:mainFP} for the finite-dimensional solution is obtained in the same manner as
\eqref{eq:thm:mainFPxxx} after which one can pass to the limit.  

\subsection{The fully coupled system}
\label{subsec:fully}
In this section, we are going to use a fixed point argument to first establish the existence of a unique local strong solution to the fully coupled solute-solvent-structure system. This solution will hold globally in time because of Proposition \ref{prop:strongEst}. The fixed point  argument requires showing the closedness of an anticipated solution in a ball and a contraction argument. These two properties will be shown in the following spaces
\begin{align*}
X_\eta&:=W^{1,2} (I_*;L^{2}(\Oeta)  )
\cap
L^\infty (I_*;W^{1,2}(\Oeta)  )
\cap L^{2} (I_*;W^{2,2}(\Oeta)  ),
\\
Y_\eta&:=  L^{\infty} (I_*;L^2(\Oeta ) )
\cap 
L^2(I_*;W^{1,2}(\Oeta) ),
\end{align*}
equipped with their canonical norms $\Vert \cdot\Vert_{X_\eta}$ and $\Vert \cdot\Vert_{Y_\eta}$, respectively.
Here, $I_*$ with $I_*=(0,T_*)$ is to be determined later.
\\ 
Now, for $(\underline{\rho}, \underline{\bT})\in Y_\eta\otimes Y_\eta$, let $( \eta, \bu,  p )$
be a unique strong solution  of  \eqref{divfreeAlone}--\eqref{interfaceAlone} with data $(\bff, g, \eta_0, \eta_\star, \bu_0, \underline{\bT})$ as shown in  \cite[Theorem 2.5]{breit2022regularity}. On the other hand, for 
\begin{align*}
(\eta,\bu)\in&  
W^{1,\infty}\big(I_*;W^{1,2}(\omega)  \big)\cap L^{\infty}\big(I_*;W^{3,2}(\omega)  \big) \cap  W^{1,2}\big(I_*; W^{2,2}(\omega)  \big)
\\&\qquad\cap  W^{2,2}\big(I_*;L^{2}(\omega)  \big)\cap
L^{2} (I;W^{4,2}(\omega))
\\&\otimes
W^{1,2} \big(I_*; L^2_{\divx}(\Oeta ) \big)\cap L^2\big(I_*;W^{2,2}(\Oeta )  \big), 
\end{align*}
let $(\rho,\bT)$ be the unique strong solution of \eqref{rhoEquAlone}-\eqref{bddCondSolvAlone} with dataset $(\rho_0,\bT_0, \bu, \eta)$ as shown in Theorem \ref{thm:mainFP}. Now define the mapping $\mathtt{T}=\mathtt{T}_1\circ\mathtt{T}_2$ where
\begin{align*}
\mathtt{T}(\underline{\rho}, \underline{\bT})=({\rho}, {\bT}), \qquad \mathtt{T}_2(\underline{\rho}, \underline{\bT})=(\eta,\bu,p), \qquad \mathtt{T}_1(\eta,\bu,p)=( {\rho}, {\bT})
\end{align*}
and let
\begin{align*}
B_R:=\big\{  (\underline{\rho}, \underline{\bT})\in X_\eta \otimes X_\eta \,:\,\Vert (\underline{\rho}, \underline{\bT})\Vert_{X_\eta \otimes X_\eta}^2 \leq R^2  \big\}.
\end{align*}
Let show that $\mathtt{T}:X_\eta\otimes X_\eta\rightarrow X_\eta\otimes X_\eta$ maps $B_R$ into $B_R$, i.e., for any $(\underline{\rho}, \underline{\bT})\in   B_R$, we have that 
\begin{align*}
\Vert (\rho, \bT) \Vert_{X_\eta \otimes X_\eta}^2=\Vert \mathtt{T}(\underline{\rho}, \underline{\bT})\Vert_{X_\eta \otimes X_\eta}^2=\Vert \mathtt{T}_1\circ \mathtt{T}_2(\underline{\rho}, \underline{\bT})\Vert_{X_\eta\otimes X_\eta}^2=\Vert \mathtt{T}_1(\eta,\bu, p) \Vert_{X_\eta\otimes X_\eta}^2 \leq R^2.
\end{align*}
Indeed if we let $(\underline{\rho}, \underline{\bT})\in X_\eta \otimes X_\eta$ then by the a priori estimate  \eqref{eq:thm:mainFP},
\begin{equation}
\begin{aligned} 
\Vert (\rho, \bT) &\Vert_{X_\eta \otimes X_\eta}^2
\\\lesssim&
\Vert  \rho_0\Vert_{W^{1,2}(\Omega_{\eta_0})}
+
\Vert  \bT_0\Vert_{W^{1,2}(\Omega_{\eta_0})}
\\&+
\int_{I_*}\big(\Vert\partial_t\eta  \Vert_{W^{2,2}(\omega)}^2+\Vert\bu\Vert_{W^{2,2}(\Oeta)}^2\big)\dt
\\&
+
cT_*
\big(
\Vert\bT_0\Vert_{L^2(\Omega_{\eta_0})}^2
+
T_* \Vert\rho_0  \Vert_{L^2(\Omega_{\eta_0})}^2
\big)
\exp\bigg(
c
\int_{I_*}
\Vert\nabx \bu\Vert_{L^2(\Oeta)}^2 \dt\bigg).
\label{eq:thm:mainFPx}
\end{aligned}
\end{equation} 
However, by \cite[(4.14) \& Lemma 4.2]{breit2022regularity},  
a unique strong solution $( \eta, \bu,  p )$  of  \eqref{divfreeAlone}--\eqref{interfaceAlone} with data $(\bff, g, \eta_0, \eta_\star, \bu_0, \underline{\bT})$ satisfies\footnote{A careful analysis of \cite[(4.14)]{breit2022regularity} shows that $\Vert g\Vert_{L^{2}(\omega)}^2$ (rather than $\Vert g\Vert_{W^{1,2}(\omega)}^2$) is sufficient on the right of \eqref{eq:thm:mainFPxy}.}
\begin{equation}
\begin{aligned} 
\int_{I_*}\big(\Vert\partial_t\eta  \Vert_{W^{2,2}(\omega)}^2+&\Vert\bu\Vert_{W^{2,2}(\Oeta)}^2\big)\dt
\\\lesssim&
\Vert \bu_0 \Vert_{W^{1,2}(\eta_0)}^2+ \Vert\eta_0\Vert_{W^{3,2}(\omega)}^2+ \Vert \eta_\star\Vert_{W^{1,2}(\omega)}^2
\\& 
+\int_{I_*}\big(\Vert g\Vert_{L^{2}(\omega)}^2
+
\Vert \bff\Vert_{L^{2}(\Oeta)}^2
+
\Vert \underline{\bT}\Vert_{W^{1,2}(\Oeta)}^2\big)\dt.
\label{eq:thm:mainFPxy}
\end{aligned}
\end{equation} 
Given the regularity of  the dataset and the fact that $ \underline{\bT}\in X_\eta$, for a large enough $R>0$ and $T_*>0$ small enough, we obtain
\begin{align*}
\Vert (\rho, \bT) \Vert_{X_\eta \otimes X_\eta}^2\leq R^2
\end{align*}
by substituting \eqref{eq:thm:mainFPxy} into \eqref{eq:thm:mainFPx}. Thus $\mathtt{T}:B_R\rightarrow B_R$.

To show the contraction property, we let $(\rho_i,\bT_i)$, $i=1,2$ be two strong solutions of \eqref{rhoEquAlone}-\eqref{bddCondSolvAlone} with dataset $(\rho_0,\bT_0, \bu_i, \eta_i)$, $i=1,2$, respectively.
Since the fluid domain depends on the deformation of the shell, we have to transform one solution, say $\bT_2$, to the domain of $\bT_1$ in order to get a difference estimate. To get the equation for the transformation of $\bT_2$, we make use of the distributional formulation
\begin{align*}
\int_{I_*}
&
\int_{\Omega_{\eta_2}}[\partial_t\bT_2+(\bu_2\cdot\nabx)\bT_2 ]:\mathbb{Y}_2\dx
=
\int_{I_*}\int_{\Omega_{\eta_2}}
[(\nabx \bu_2)\bT_2 + \bT_2(\nabx \bu_2)^\top]:\mathbb{Y}_2\dx\dt
\\&
-2\int_{I_*}\int_{\Omega_{\eta_2}}(\bT_2:\mathbb{Y}_2 - \rho_2 \mathrm{tr}(\mathbb{Y}_2))\dx\dt
-
\int_{I_*}\int_{\Omega_{\eta_2}}\nabx \bT_2::\nabx \mathbb{Y}_2 \dx\dt
\end{align*}
for all $\mathbb{Y}\in C^\infty(\overline{{I_*}}\times \mathbb{R}^2)$.
Let us set $\overline{\bT}_2=\bT_2\circ \bm{\Psi}_{\eta_2-\eta_1}$, $\overline{\bu}_2=\bu_2\circ \bm{\Psi}_{\eta_2-\eta_1}$, $\overline{\rho}_2=\rho_2\circ \bm{\Psi}_{\eta_2-\eta_1}$ and $\mathbb{Y}_2=\overline{\mathbb{Y}}_2\circ \bm{\Psi}_{\eta_2-\eta_1}^{-1}$, then
\begin{align*}
\int_{I_*}\int_{\Omega_{\eta_2}}&
 \partial_t\bT_2:\mathbb{Y}_2 \dx\dt
\\&=
 \int_{I_*}\int_{\Omega_{\eta_2}}
 (\partial_t\overline{\bT}_2\circ \bm{\Psi}_{\eta_2-\eta_1}^{-1}+\nabx \overline{\bT}_2\circ \bm{\Psi}_{\eta_2-\eta_1}^{-1} \partial_t \bm{\Psi}_{\eta_2-\eta_1}^{-1})
 :
 \overline{\mathbb{Y}}_2\circ \bm{\Psi}_{\eta_2-\eta_1}^{-1} \dx\dt
 \\
 &=
 \int_{I_*}\int_{\Omega_{\eta_1}}
 J_{\eta_2-\eta_1}\,
 (\partial_t \overline{\bT}_2+\nabx  \overline{\bT}_2 \cdot\partial_t \bm{\Psi}_{\eta_2-\eta_1}^{-1}\circ \bm{\Psi}_{\eta_2-\eta_1})
 :
 \overline{\mathbb{Y}}_2  \dx\dt
\end{align*}
where $J_{\eta_2-\eta_1}=\det(\nabx\bm{\Psi}_{\eta_2-\eta_1})$. We also have
\begin{align*}
\int_{I_*}\int_{\Omega_{\eta_2}}&
 \bu_2\cdot\nabx\bT_2: \mathbb{Y}_2 \dx\dt
\\&=
 \int_{I_*}\int_{\Omega_{\eta_2}}
\overline{\bu}_2\circ \bm{\Psi}_{\eta_2-\eta_1}^{-1}\cdot \nabx\overline{\bT}_2\circ \bm{\Psi}_{\eta_2-\eta_1}^{-1} \nabx \bm{\Psi}_{\eta_2-\eta_1}^{-1}: \overline{\mathbb{Y}}_2\circ \bm{\Psi}_{\eta_2-\eta_1}^{-1}\dx\dt
 \\
 &=
 \int_{I_*}\int_{\Omega_{\eta_1}}
 J_{\eta_2-\eta_1}\,\overline{\bu}_2 \cdot\nabx \overline{\bT}_2 \nabx \bm{\Psi}_{\eta_2-\eta_1}^{-1}\circ \bm{\Psi}_{\eta_2-\eta_1}  : \overline{\mathbb{Y}}_2 \dx\dt
\end{align*}
as well as
\begin{align*}
\int_{I_*}\int_{\Omega_{\eta_2}}&
 [(\nabx \bu_2)\bT_2 + \bT_2(\nabx \bu_2)^\top]:\mathbb{Y}_2 \dx\dt
\\&=
 \int_{I_*}\int_{\Omega_{\eta_2}}
[\nabx\overline{\bu}_2\circ \bm{\Psi}_{\eta_2-\eta_1}^{-1}\nabx \bm{\Psi}_{\eta_2-\eta_1}^{-1} \overline{\bT}_2\circ \bm{\Psi}_{\eta_2-\eta_1}^{-1} 
\\&
\qquad\qquad
+
\overline{\bT}_2\circ \bm{\Psi}_{\eta_2-\eta_1}^{-1} 
(\nabx \bm{\Psi}_{\eta_2-\eta_1}^{-1})^\top(
\nabx\overline{\bu}_2\circ \bm{\Psi}_{\eta_2-\eta_1}^{-1})^\top]
:\overline{\mathbb{Y}}_2\circ \bm{\Psi}_{\eta_2-\eta_1}^{-1}\dx\dt
 \\
 &=
 \int_{I_*}\int_{\Omega_{\eta_1}}
 J_{\eta_2-\eta_1}\,\nabx\overline{\bu}_2\nabx \bm{\Psi}_{\eta_2-\eta_1}^{-1}\circ \bm{\Psi}_{\eta_2-\eta_1} \overline{\bT}_2
:\overline{\mathbb{Y}}_2\dx\dt
\\&
\qquad\qquad
=
 \int_{I_*}\int_{\Omega_{\eta_1}}
 J_{\eta_2-\eta_1}\, 
\overline{\bT}_2
(\nabx \bm{\Psi}_{\eta_2-\eta_1}^{-1}\circ \bm{\Psi}_{\eta_2-\eta_1})^\top(
\nabx\overline{\bu}_2 )^\top
:\overline{\mathbb{Y}}_2\dx\dt
\end{align*}
Similarly,
\begin{align*}
\int_{I_*}\int_{\Omega_{\eta_2}}(\bT_2:\mathbb{Y}_2 - \rho_2 \mathrm{tr}(\mathbb{Y}_2))\dx\dt
=
\int_{I_*}\int_{\Omega_{\eta_1}}J_{\eta_2-\eta_1}\,(\overline{\bT}_2:\overline{\mathbb{Y}}_2 - \overline{\rho}_2 \mathrm{tr}(\overline{\mathbb{Y}}_2))\dx\dt
\end{align*}
and
\begin{align*}
\int_{I_*}\int_{\Omega_{\eta_2}}&
 \nabx\bT_2::\nabx\mathbb{Y}_2 \dx\dt
\\&=
 \int_{I_*}\int_{\Omega_{\eta_2}}
\nabx \bm{\Psi}_{\eta_2-\eta_1}^{-1}\nabx \overline{\bT}_2\circ \bm{\Psi}_{\eta_2-\eta_1}^{-1} : \nabx \bm{\Psi}_{\eta_2-\eta_1}^{-1} \nabx\overline{\mathbb{Y}}_2\circ \bm{\Psi}_{\eta_2-\eta_1}^{-1}\dx\dt
 \\
 &=
 \int_{I_*}\int_{\Omega_{\eta_1}}
 J_{\eta_2-\eta_1}\,(\nabx \bm{\Psi}_{\eta_2-\eta_1}^{-1}\circ \bm{\Psi}_{\eta_2-\eta_1})^\top\nabx \bm{\Psi}_{\eta_2-\eta_1}^{-1}\circ \bm{\Psi}_{\eta_2-\eta_1}\nabx \overline{\bT}_2 ::\nabx\overline{\mathbb{Y}}_2 \dx\dt.
\end{align*}
Now let
\begin{align*}
&\mathbb{A}_{\eta_2-\eta_1}:=
(\nabx \bm{\Psi}_{\eta_2-\eta_1}^{-1}\circ \bm{\Psi}_{\eta_2-\eta_1})^\top\mathbb{B}_{\eta_2-\eta_1}
\quad\text{where}\quad
\mathbb{B}_{\eta_2-\eta_1}:=
J_{\eta_2-\eta_1} \nabx \bm{\Psi}_{\eta_2-\eta_1}^{-1}\circ \bm{\Psi}_{\eta_2-\eta_1}
\end{align*}
then we obtain the equation
\begin{align*}
\partial_t\overline{\bT}_2
+
\overline{\bu}_2 \cdot\nabx \overline{\bT}_2 
&=
\nabx\overline{\bu}_2\overline{\bT}_2
+
\overline{\bT}_2 (\nabx\overline{\bu}_2)^\top
-
2
(\overline{\bT}_2-\overline{\rho}_2\mathbb{I})
+
\Delx\overline{\bT}_2
\\&
-
\divx((\mathbb{I}-\mathbb{A}_{\eta_2-\eta_1})\nabx \overline{\bT}_2)
+
\mathbb{H}_{\eta_2-\eta_1}(\overline{\rho}_2,\overline{\bu}_2,\overline{\bT}_2)
\end{align*}
defined on ${I_*} \times \Omega_{\eta_1}$ where
\begin{align*}
\mathbb{H}_{\eta_2-\eta_1}&(\overline{\rho}_2,\overline{\bu}_2,\overline{\bT}_2)
\\&=
(1-J_{\eta_2-\eta_1})\partial_t\overline{\bT}_2
-
J_{\eta_2-\eta_1} \nabx \overline{\bT}_2\cdot\partial_t\bm{\Psi}_{\eta_2-\eta_1}^{-1}\circ \bm{\Psi}_{\eta_2-\eta_1}
+
\nabx\overline{\bu}_2(\mathbb{B}_{\eta_2-\eta_1}-\mathbb{I})\overline{\bT}_2
\\&+
\overline{\bT}_2(\mathbb{B}_{\eta_2-\eta_1}-\mathbb{I})^\top (\nabx\overline{\bu}_2)^\top
+
\overline{\bu}_2 \cdot\nabx \overline{\bT}_2 (\mathbb{I}-\mathbb{B}_{\eta_2-\eta_1})
+
2
(1-J_{\eta_2-\eta_1}) (\overline{\bT}_2-\overline{\rho}_2\mathbb{I}).
\end{align*}
Now take the first solution
\begin{align*}
\partial_t \bT_1 + \mathbf{u}_1\cdot \nabx \bT_1
=
\nabx \bu_1\bT_1 + \bT_1(\nabx \bu_1)^\top - 2(\bT_1 - \rho_1 \mathbb{I})+\Delx \bT_1
\end{align*}
defined on ${I_*} \times \Omega_{\eta_1}$ and set
\begin{align*}
\bT_{12}:=\bT_1-\overline{\bT}_2,
\quad
\bu_{12}=\bu_1-\overline{\bu}_2, 
\quad
\rho_{12}=\rho_1-\overline{\rho}_2,
\quad
\eta_{12}=\eta_1-\eta_2.
\end{align*}
Then $\bT_{12}$ solves
\begin{equation}
\begin{aligned}
\label{diffEquaSigma}
\partial_t \bT_{12}+ \mathbf{u}_1\cdot \nabx \bT_{12}
&=
\nabx \bu_1\bT_{12} + \bT_{12}(\nabx \bu_1)^\top - 2(\bT_{12} - \rho_{12} \mathbb{I})
+\Delx \bT_{12}
\\&- \mathbf{u}_{12}\cdot \nabx \overline{\bT}_2
+
\nabx \bu_{12}\overline{\bT}_2 + \overline{\bT}_2(\nabx \bu_{12})^\top 
\\&+
\divx((\mathbb{I}-\mathbb{A}_{-\eta_{12}})\nabx \overline{\bT}_2)
-
\mathbb{H}_{-\eta_{12}}(\overline{\rho}_2,\overline{\bu}_2,\overline{\bT}_2)
\end{aligned}
\end{equation}
on ${I_*} \times \Omega_{\eta_1}$ with identically zero initial condition.
If we now test \eqref{diffEquaSigma} with $\bT_{12}$, then for $t\in {I_*}$, we obtain
\begin{equation}
\begin{aligned}\label{contracUniq}
\frac{1}{2}
\frac{\dd}{\dt}&\Vert \bT_{12}\Vert_{L^2(\Omega_{\eta_1})}^2 
+ 
\Vert \bT_{12}\Vert_{W^{1,2}(\Omega_{\eta_1})}^2
\\&\leq
\Vert \bT_{12}\Vert_{L^2(\Omega_{\eta_1})}^2 
+
\Vert \rho_{12}\Vert_{L^2(\Omega_{\eta_1})}^2
%\\&+
%\int_{\partial\Omega_{\eta_1}}
%(\bn_{\eta_1}\cdot\nabx)\bT_{12}:\bT_{12}
%\dd\mathcal{H}^1
+
2\int_{\Omega_{\eta_1}}\vert\nabx\bu_1\vert
\vert \bT_{12}\vert^2\dx
\\&
+
\int_{\Omega_{\eta_1}}
\big[
\nabx \bu_{12}\overline{\bT}_2 + \overline{\bT}_2(\nabx \bu_{12})^\top 
-
\mathbf{u}_{12}\cdot \nabx \overline{\bT}_2
\big]:\bT_{12} \dx
\\&
+
\int_{\Omega_{\eta_1}}
\big[
\divx((\mathbb{I}-\mathbb{A}_{-\eta_{12}})\nabx \overline{\bT}_2)
-
\mathbb{H}_{-\eta_{12}}(\overline{\rho}_2,\overline{\bu}_2,\overline{\bT}_2)\big]:\bT_{12} \dx.
\end{aligned}
\end{equation} 
Firstly, we obtain
\begin{equation}
\begin{aligned}
2\int_{\Omega_{\eta_1}}\vert\nabx\bu_1\vert
\vert \bT_{12}\vert^2\dx
&\lesssim
\Vert \nabx\bu_1\Vert_{L^2(\Omega_{\eta_1})}
\Vert \bT_{12}\Vert_{L^4(\Omega_{\eta_1})}^2
\\&
\lesssim
\Vert \nabx\bu_1\Vert_{L^2(\Omega_{\eta_1})}
\Vert \bT_{12}\Vert_{L^2(\Omega_{\eta_1})}
\Vert  \bT_{12}\Vert_{W^{1,2}(\Omega_{\eta_1})} 
\\&
\leq
c
\Vert \nabx\bu_1\Vert_{L^2(\Omega_{\eta_1})}^2
\Vert \bT_{12}\Vert_{L^2(\Omega_{\eta_1})}^2 
+
\delta
\Vert \bT_{12}\Vert_{W^{1,2}(\Omega_{\eta_1})}^2
\end{aligned}
\end{equation} 
%for any $\delta>0$.
%Next, by using the embedding $W^{1,2}(\Omega_{\eta_1}) \hookrightarrow L^4(\Omega_{\eta_1})$ as well as Ladyzhenskaya inequality to obtain,
%\begin{equation}
%\begin{aligned}
%\int_{\Omega_{\eta_1}}\big[\nabx \bu_{12}\overline{\bT}_2 &+ \overline{\bT}_2(\nabx \bu_{12})^\top 
%\big]:\bT_{12}\dx
%\lesssim
%\Vert\nabx\bu_{12}\Vert_{L^2(\Omega_{\eta_1})}
%\Vert \overline{\bT}_2\Vert_{L^{4}(\Omega_{\eta_1})}
%\Vert \bT_{12}\Vert_{L^4(\Omega_{\eta_1})}
%\\
%&\lesssim
%\Vert\nabx\bu_{12}\Vert_{L^2(\Omega_{\eta_1})}
%\Vert \overline{\bT}_2\Vert_{W^{1,2}(\Omega_{\eta_1})}
%\Vert \bT_{12}\Vert_{L^2(\Omega_{\eta_1})}^{1/2}
%\Vert  \bT_{12}\Vert_{W^{1,2}(\Omega_{\eta_1})}^{1/2}
%\\
%&\leq
%\delta
%\Vert  \bT_{12}\Vert_{W^{1,2}(\Omega_{\eta_1})}^2
%+
%c
%\Vert\nabx\bu_{12}\Vert_{L^2(\Omega_{\eta_1})}^{4/3}
%\Vert \overline{\bT}_2\Vert_{W^{1,2}(\Omega_{\eta_1})}^{4/3}
%\Vert \bT_{12}\Vert_{L^2(\Omega_{\eta_1})}^{2/3}
%\\
%&\leq
%\delta
%\Vert \bT_{12}\Vert_{W^{1,2}(\Omega_{\eta_1})}^2
%+
%\delta
%\Vert \bT_{12}\Vert_{L^2(\Omega_{\eta_1})}^2
%+
%c
%\Vert \nabx\bu_{12}\Vert_{L^2(\Omega_{\eta_1})}^2
%\Vert \overline{\bT}_2\Vert_{W^{1,2}(\Omega_{\eta_1})}^2
%\end{aligned}
%\end{equation}
for any $\delta>0$.
Next, by using the embedding $W^{2,2}(\Omega_{\eta_1}) \hookrightarrow L^\infty(\Omega_{\eta_1})$ to obtain,
\begin{equation}
\begin{aligned}
\int_{\Omega_{\eta_1}}\big[\nabx \bu_{12}\overline{\bT}_2 &+ \overline{\bT}_2(\nabx \bu_{12})^\top 
\big]:\bT_{12}\dx
\\&\lesssim
\Vert\nabx\bu_{12}\Vert_{L^2(\Omega_{\eta_1})}
\Vert \overline{\bT}_2\Vert_{L^{\infty}(\Omega_{\eta_1})}
\Vert \bT_{12}\Vert_{L^2(\Omega_{\eta_1})}
%\\
%&\lesssim
%\Vert\nabx\bu_{12}\Vert_{L^2(\Omega_{\eta_1})}
%\Vert \overline{\bT}_2\Vert_{W^{1,2}(\Omega_{\eta_1})}
%\Vert \bT_{12}\Vert_{L^2(\Omega_{\eta_1})}^{1/2}
%\Vert  \bT_{12}\Vert_{W^{1,2}(\Omega_{\eta_1})}^{1/2}
%\\
%&\leq
%\delta
%\Vert  \bT_{12}\Vert_{W^{1,2}(\Omega_{\eta_1})}^2
%+
%c
%\Vert\nabx\bu_{12}\Vert_{L^2(\Omega_{\eta_1})}^{4/3}
%\Vert \overline{\bT}_2\Vert_{W^{1,2}(\Omega_{\eta_1})}^{4/3}
%\Vert \bT_{12}\Vert_{L^2(\Omega_{\eta_1})}^{2/3}
\\
&\leq
\delta
\Vert \nabx\bu_{12}\Vert_{L^2(\Omega_{\eta_1})}^2
+ 
c
\Vert \overline{\bT}_2\Vert_{W^{2,2}(\Omega_{\eta_1})}^2
\Vert \bT_{12}\Vert_{L^2(\Omega_{\eta_1})}^2
\end{aligned}
\end{equation}
for any $\delta>0$.
Similarly,
\begin{equation}
\begin{aligned}
\int_{\Omega_{\eta_1}}
\mathbf{u}_{12}\cdot \nabx \overline{\bT}_2
 &:\bT_{12}\dx
\lesssim
\Vert  \bu_{12}\Vert_{L^{4}(\Omega_{\eta_1})}
\Vert \nabx\overline{\bT}_2\Vert_{L^{4}(\Omega_{\eta_1})}
\Vert \bT_{12}\Vert_{L^2(\Omega_{\eta_1})} 
\\
&\leq
\delta
\Vert \nabx\bu_{12}\Vert_{L^2(\Omega_{\eta_1})}^2
+ 
c
\Vert \overline{\bT}_2\Vert_{W^{2,2}(\Omega_{\eta_1})}^2
\Vert \bT_{12}\Vert_{L^2(\Omega_{\eta_1})}^2.
\end{aligned}
\end{equation} 
%for any $\delta>0$.
%Similarly,
%\begin{equation}
%\begin{aligned}
%\int_{\Omega_{\eta_1}}
%\mathbf{u}_{12}\cdot \nabx \overline{\bT}_2
% &:\bT_{12}\dx
%\lesssim
%\Vert  \bu_{12}\Vert_{L^{4}(\Omega_{\eta_1})}
%\Vert \nabx\overline{\bT}_2\Vert_{L^{2}(\Omega_{\eta_1})}
%\Vert \bT_{12}\Vert_{L^2(\Omega_{\eta_1})}^{1/2}
%\Vert  \bT_{12}\Vert_{W^{1,2}(\Omega_{\eta_1})}
%^{1/2}
%\\
%&\leq
%\delta
%\Vert \bT_{12}\Vert_{W^{1,2}(\Omega_{\eta_1})}^2
%+
%\delta
%\Vert \bT_{12}\Vert_{L^2(\Omega_{\eta_1})}^2
%+
%c
%\Vert \bu_{12}\Vert_{W^{1,2}(\Omega_{\eta_1})}^2
%\Vert \nabx\overline{\bT}_2\Vert_{L^{2}(\Omega_{\eta_1})}^2.
%\end{aligned}
%\end{equation} 
Next, we write
\begin{equation}
\begin{aligned}
\int_{\Omega_{\eta_1}}
\divx((\mathbb{I}-\mathbb{A}_{-\eta_{12}})\nabx \overline{\bT}_2)
:\bT_{12} \dx
&=
\int_{\partial\Omega_{\eta_1}}
\bn_{\eta_1}\cdot(\mathbb{I}-\mathbb{A}_{-\eta_{12}})\nabx \overline{\bT}_2
:\bT_{12} \dx
\\&-
\int_{\Omega_{\eta_1}}
(\mathbb{I}-\mathbb{A}_{-\eta_{12}}) \nabx \overline{\bT}_2:
:\nabx\bT_{12} \dx
\end{aligned}
\end{equation}
where, by trace theorem and the fact that
$\mathbb{I}-\mathbb{A}_{-\eta_{12}}\sim -\naby\eta_{12}$ holds in norm
\begin{equation}
\begin{aligned}
\int_{\partial\Omega_{\eta_1}}
\bn_{\eta_1}&\cdot(\mathbb{I}-\mathbb{A}_{-\eta_{12}})\nabx \overline{\bT}_2
:\bT_{12} \dx
\\&\lesssim
\Vert
\nabx \overline{\bT}_2
\Vert_{L^4(\partial\Omega_{\eta_1})}
\Vert
\mathbb{I}-\mathbb{A}_{-\eta_{12}}
\Vert_{L^4(\partial\Omega_{\eta_1})}
\Vert
\bT_{12}
\Vert_{L^2(\partial\Omega_{\eta_1})}
\\&
\lesssim
\Vert
\nabx \overline{\bT}_2
\Vert_{W^{3/4,2}( \Omega_{\eta_1})}
\Vert
\eta_{12}
\Vert_{W^{1,4}(\omega)}
\Vert 
\bT_{12}
\Vert_{W^{3/4,2}(\Omega_{\eta_1})} 
\\&
\lesssim
\Vert
  \overline{\bT}_2
\Vert_{W^{2,2}( \Omega_{\eta_1})}
\Vert
\eta_{12}
\Vert_{W^{2,2}(\omega)}
\Vert 
\bT_{12}
\Vert_{L^2(\Omega_{\eta_1})}^{1/4}
\Vert 
\bT_{12}
\Vert_{W^{1,2}(\Omega_{\eta_1})}^{3/4}
\\&\leq
\delta
\Vert  \bT_{12}\Vert_{W^{1,2}(\Omega_{\eta_1})}^2
+
\delta
\Vert 
\bT_{12}
\Vert_{L^2(\Omega_{\eta_1})}^2
+
c
\Vert  \overline{\bT}_2
\Vert_{W^{2,2}(\Omega_{\eta_1})}^2
\Vert  \eta_{12} \Vert_{W^{2,2}(\omega )}^2.
\end{aligned}
\end{equation}
We also obtain
\begin{equation}
\begin{aligned}
\int_{\Omega_{\eta_1}}
 (\mathbb{I}-&\mathbb{A}_{-\eta_{12}})\nabx \overline{\bT}_2:
:\nabx\bT_{12} \dx
%\\&\leq
%\frac{\varepsilon}{2}
%\Vert \nabx\bT_{12}\Vert_{L^2(\Omega_{\eta_1})}^2
%+
%c(\varepsilon)
%\Vert \nabx\overline{\bT}_2
%\Vert_{L^4(\Omega_{\eta_1})}^2
%\Vert  \mathbb{I}-\mathbb{A}_{\eta_2-\eta_1}\Vert_{L^4(\Omega_{\eta_1})}^2
\\&
\leq
\delta
\Vert \nabx\bT_{12}\Vert_{L^2(\Omega_{\eta_1})}^2
+
c
\Vert  \overline{\bT}_2
\Vert_{W^{2,2}(\Omega_{\eta_1})}^2
\Vert  \eta_{12} \Vert_{W^{2,2}(\omega )}^2.
\end{aligned}
\end{equation}
Next, we rewrite
\begin{equation}
\begin{aligned}
\int_{\Omega_{\eta_1}}
\mathbb{H}_{-\eta_{12}}(\overline{\rho}_2,\overline{\bu}_2,\overline{\bT}_2) :\bT_{12} \dx
&=
\int_{\Omega_{\eta_1}}
(1-J_{-\eta_{12}})\partial_t\overline{\bT}_2
 :\bT_{12} \dx
\\&-
\int_{\Omega_{\eta_1}}
J_{-\eta_{12}} \nabx \overline{\bT}_2\cdot\partial_t\bm{\Psi}_{-\eta_{12}}^{-1}\circ \bm{\Psi}_{-\eta_{12}}
 :\bT_{12} \dx
\\&+
\int_{\Omega_{\eta_1}}
\nabx\overline{\bu}_2(\mathbb{B}_{-\eta_{12}}-\mathbb{I})\overline{\bT}_2
 :\bT_{12} \dx
 \\&+
\int_{\Omega_{\eta_1}}
\overline{\bT}_2(\mathbb{B}_{-\eta_{12}}-\mathbb{I})^\top (\nabx\overline{\bu}_2)^\top
:\bT_{12} \dx
 \\&+
\int_{\Omega_{\eta_1}}
\overline{\bu}_2 \cdot\nabx \overline{\bT}_2 (\mathbb{I}-\mathbb{B}_{-\eta_{12}})
 :\bT_{12} \dx
  \\&+2
\int_{\Omega_{\eta_1}}
(1-J_{-\eta_{12}}) (\overline{\bT}_2-\overline{\rho}_2\mathbb{I}) :\bT_{12} \dx
\\&=:I_1+\ldots+I_6.
\end{aligned}
\end{equation} 
Then we have, by interpolation 
\begin{align*}
I_1&\lesssim
\Vert  \eta_{12}\Vert_{W^{1,4}(\omega )}
\Vert \partial_t\overline{\bT}_2\Vert_{L^{2}(\Omega_{\eta_1})}
\Vert  \bT_{12}\Vert_{L^{4}(\Omega_{\eta_1})}
\\
&\lesssim
\Vert  \eta_{12}\Vert_{W^{2,2}(\omega )}
\Vert \partial_t\overline{\bT}_2\Vert_{L^{2}(\Omega_{\eta_1})}
\Vert  \bT_{12}\Vert_{L^{2}(\Omega_{\eta_1})}^{1/2}
\Vert   \bT_{12}\Vert_{W^{1,2}(\Omega_{\eta_1})}^{1/2}
\\
&\leq
c
\Vert  \eta_{12}\Vert_{W^{2,2}(\omega )}^2
\Vert \partial_t\overline{\bT}_2\Vert_{L^{2}(\Omega_{\eta_1})}^2
+2\delta
\Vert  \bT_{12}\Vert_{L^{2}(\Omega_{\eta_1})}
\Vert  \bT_{12}\Vert_{W^{1,2}(\Omega_{\eta_1})}
\\
&\leq
c
\Vert  \eta_{12}\Vert_{W^{2,2}(\omega )}^2
\Vert \partial_t\overline{\bT}_2\Vert_{L^{2}(\Omega_{\eta_1})}^2
+\delta
\Vert  \bT_{12}\Vert_{L^{2}(\Omega_{\eta_1})}^2
+\delta
\Vert  \bT_{12}\Vert_{W^{1,2}(\Omega_{\eta_1})}^2
\end{align*}
For $I_2$, we use the equivalence $W^{3/2,2}(\Omega_{\eta_1}) \equiv W^{1,4}(\Omega_{\eta_1})$ in 2-d and  interpolation to obtain
\begin{align*}
I_2 
&\lesssim
\Vert  \partial_t\eta_{12}\Vert_{L^{2}(\omega )}
\Vert \overline{\bT}_2\Vert_{W^{3/2,2}(\Omega_{\eta_1})}
\Vert \bT_{12}\Vert_{L^4(\Omega_{\eta_1})}
\\
&\lesssim
\Vert  \partial_t\eta_{12}\Vert_{L^{2}(\omega )}
\Vert \overline{\bT}_2\Vert_{W^{1,2}(\Omega_{\eta_1})}^{1/2}
\Vert \overline{\bT}_2\Vert_{W^{2,2}(\Omega_{\eta_1})}^{1/2}
\Vert \bT_{12}\Vert_{L^2(\Omega_{\eta_1})}^{1/2}
\Vert \bT_{12}\Vert_{W^{1,2}(\Omega_{\eta_1})}^{1/2}
\\
&\leq
\delta
\Vert \bT_{12}\Vert_{W^{1,2}(\Omega_{\eta_1})}^2
+
c
\Vert  \partial_t\eta_{12}\Vert_{L^{2}(\omega )}^{4/3}
\Vert \overline{\bT}_2\Vert_{W^{1,2}(\Omega_{\eta_1})}^{2/3}
\Vert \overline{\bT}_2\Vert_{W^{2,2}(\Omega_{\eta_1})}^{2/3}
\Vert \bT_{12}\Vert_{L^2(\Omega_{\eta_1})}^{2/3}
\\
&\leq
\delta
\Vert \bT_{12}\Vert_{W^{1,2}(\Omega_{\eta_1})}^2
+
c
\Vert  \partial_t\eta_{12}\Vert_{L^{2}(\omega )}^2
\Vert \overline{\bT}_2\Vert_{W^{1,2}(\Omega_{\eta_1})}
+
c\Vert \overline{\bT}_2\Vert_{W^{2,2}(\Omega_{\eta_1})}^2
\Vert \bT_{12}\Vert_{L^2(\Omega_{\eta_1})}^2.
\end{align*}
Finally, we also obtain
\begin{align*}
&I_3 +I_4+I_5
\lesssim
\Vert \eta_{12}\Vert_{W^{2,2}(\omega )}^2
\Vert \overline{\bT}_2\Vert_{W^{2,2}(\Omega_{\eta_1})}^2
+
\Vert \overline{\bu}_2\Vert_{W^{2,2}(\Omega_{\eta_1})}^2\Vert \bT_{12}\Vert_{L^2(\Omega_{\eta_1})}^2,
\\&I_6
\leq
c
\Vert  \eta_{12} \Vert_{W^{2,2}(\omega )}^2
\big(\Vert \overline{\bT}_2\Vert_{W^{1,2}(\Omega_{\eta_1})}^2
+
\Vert \overline{\rho}_2\Vert_{W^{1,2}(\Omega_{\eta_1})}^2\big)
+
\delta
\Vert \bT_{12}\Vert_{L^2(\Omega_{\eta_1})}^2.
\end{align*}
We have shown that
\begin{equation}
\begin{aligned}
\sup_{t\in {I_*}}&
\Vert \bT_{12}(t)\Vert_{L^2(\Omega_{\eta_1})}^2 
+
\int_{I_*}
\Vert \bT_{12}\Vert_{W^{1,2}(\Omega_{\eta_1})}^2\dt
\\\lesssim&
\exp\bigg(
\int_{I_*}\big(1+
\Vert \overline{\bu}_2\Vert_{W^{2,2}(\Omega_{\eta_1})}^2
+
\Vert \nabx\bu_1\Vert_{L^2(\Omega_{\eta_1})}^2
+
\Vert \overline{\bT}_2\Vert_{W^{2,2}(\Omega_{\eta_1})}^2\big)
\dt\bigg)
\\& 
\times\bigg[
\int_{I_*}
\Vert \rho_{12}\Vert_{L^2(\Omega_{\eta_1})}^2\dt
+
\int_{I_*}
\big(1+
\Vert \overline{\bT}_2\Vert_{W^{1,2}(\Omega_{\eta_1})}^2\big) 
\Vert  \bu_{12}\Vert_{W^{1,2}(\Omega_{\eta_1})}^2 
\dt
\\&
\qquad+
\int_{I_*}
\big(1+
\Vert \overline{\bT}_2\Vert_{W^{1,2}(\Omega_{\eta_1})}^2\big) 
\Vert  \partial_t\eta_{12}\Vert_{L^{2}(\omega )}^2 
\dt
\\&
\qquad+
\int_{I_*}
\big(\Vert \partial_t\overline{\bT}_2\Vert_{L^{2}(\Omega_{\eta_1})}^2
+
\Vert  \overline{\bT}_2
\Vert_{W^{2,2}(\Omega_{\eta_1})}^2
+
\Vert \overline{\rho}_2\Vert_{W^{1,2}(\Omega_{\eta_1})}^2
\big)
\Vert  \eta_{12} \Vert_{W^{2,2}(\omega )}^2
\dt 
\bigg]
\end{aligned}
\end{equation} 
which can be simplified to
\begin{equation}
\begin{aligned}
\sup_{t\in {I_*}}&
\Vert \bT_{12}(t)\Vert_{L^2(\Omega_{\eta_1})}^2
+
\int_{I_*}
\Vert  \bT_{12}\Vert_{W^{1,2}(\Omega_{\eta_1})}^2
\dt 
\\&\lesssim
\int_{I_*}\Vert \rho_{12}\Vert_{L^2(\Omega_{\eta_1})}^2\dt
+
\int_{I_*}
\big(
\Vert  \bu_{12}\Vert_{W^{1,2}(\Omega_{\eta_1})}^2+
\Vert  \partial_t\eta_{12}\Vert_{L^{2}(\omega )}^2\big)
\dt
\\&
+
\sup_{t\in {I_*}}
\Vert  \eta_{12} \Vert_{W^{2,2}(\omega )}^2 
%+\delta
\end{aligned}
\end{equation} 
%for any $\delta>0$ 
by using the regularity of the individual terms.
Similarly, for the difference of two strong solutions of \eqref{rhoEquAlone}, we also obtain
\begin{equation}
\begin{aligned}\label{contracUniq1}
\sup_{t\in {I_*}}&
\Vert \rho_{12}(t)\Vert_{L^2(\Omega_{\eta_1})}^2
+
\int_{I_*}
\Vert  \rho_{12}\Vert_{W^{1,2}(\Omega_{\eta_1})}^2
\dt 
\\&\lesssim
\int_{I_*}
\big(
\Vert  \bu_{12}\Vert_{W^{1,2}(\Omega_{\eta_1})}^2+
\Vert  \partial_t\eta_{12}\Vert_{L^{2}(\omega )}^2\big)
\dt 
+
\sup_{t\in {I_*}}
\Vert  \eta_{12} \Vert_{W^{2,2}(\omega )}^2 .
%+\delta
\end{aligned}
\end{equation} 
%for any $\delta>0$.
Combining the two estimates above therefore yields
\begin{equation}
\begin{aligned}
\label{contrEst0}
\sup_{t\in {I_*}}&
\big(\Vert \rho_{12}(t)\Vert_{L^2(\Omega_{\eta_1})}^2
+
\Vert \bT_{12}(t)\Vert_{L^2(\Omega_{\eta_1})}^2
\big)
\\+&
\int_{I_*}
\big(\Vert \rho_{12}\Vert_{W^{1,2}(\Omega_{\eta_1})}^2
+
\Vert \bT_{12}\Vert_{W^{1,2}(\Omega_{\eta_1})}^2
\big)
\dt 
\\&\lesssim
(1+T_*)\bigg[
\int_{I_*}
\big(
\Vert  \bu_{12}\Vert_{W^{1,2}(\Omega_{\eta_1})}^2+
\Vert  \partial_t\eta_{12}\Vert_{L^{2}(\omega )}^2\big)
\dt 
+
\sup_{t\in {I_*}}
\Vert  \eta_{12} \Vert_{W^{2,2}(\omega )}^2 
%+\delta
\bigg]
\end{aligned}
\end{equation} 
%for any $\delta>0$.
Now,  let consider two strong solutions $( \eta_i, \bu_i,  p_i )$, $i=1,2$  of  \eqref{divfreeAlone}--\eqref{interfaceAlone} with data $(\bff, g, \eta_0, \eta_\star, \bu_0, \underline{\bT}_i)$, respectively. The existence of these solutions is shown in  \cite[Theorem 2.5]{breit2022regularity}. For 
\begin{align*}
\underline{\bT}_{12}:=\underline{\bT}_1-\underline{\overline{\bT}}_2,
\quad
\bu_{12}=\bu_1-\overline{\bu}_2,  
\quad
\eta_{12}=\eta_1-\eta_2,
\end{align*}
where $\underline{\overline{\bT}}_2:=\underline{\bT}_2\circ\bm{\Psi}_{\eta_2-\eta_1}$, it follows from \cite[Remark 5.2]{BMSS} that
\begin{align*}
\int_{I_*}
\big(
\Vert  \bu_{12}\Vert_{W^{1,2}(\Omega_{\eta_1})}^2+
\Vert  \partial_t\eta_{12}\Vert_{L^{2}(\omega )}^2\big)
\dt
+
\sup_{t\in {I_*}}
\Vert  \eta_{12} \Vert_{W^{2,2}(\omega )}^2
&\lesssim
\int_{I_*}
\Vert  \underline{\bT}_{12}\Vert_{L^{2}(\Omega_{\eta_1})}^2
\dt
\\&\lesssim
T_*
\Vert ( \underline{\rho}_{12},  \underline{\bT}_{12})\Vert_{Y_{\eta_1}\otimes Y_{\eta_1}}^2.
\end{align*}
Inserting into \eqref{contrEst0} then yields
\begin{equation}
\begin{aligned}
\label{contrEst1}
\Vert (\rho_{12}, \bT_{12})\Vert_{Y_{\eta_1}\otimes Y_{\eta_1}}^2
&\lesssim
(1+T_*)\bigg[
%\delta
%+
T_*
\Vert ( \underline{\rho}_{12},  \underline{\bT}_{12})\Vert_{Y_{\eta_1}\otimes Y_{\eta_1}}^2
\bigg].
\end{aligned}
\end{equation} 
%Since a contraction map holds trivially when $\Vert ( \underline{\rho}_{12},  \underline{\bT}_{12})\Vert_{Y_{\eta_1}\otimes Y_{\eta_1}}^2=0$, we may assume that $\Vert ( \underline{\rho}_{12},  \underline{\bT}_{12})\Vert_{Y_{\eta_1}\otimes Y_{\eta_1}}^2$ is strictly positive and bounded by the sum of the individual terms $\rho_1,\underline{\rho}_2,\bT_1$ and $\underline{\bT}_2$ in $Y_{\eta_1}$. In this case, we can divide and multiply the $\delta$-term in \eqref{contrEst1} by $\Vert (\rho_{12}, \bT_{12})\Vert_{Y_{\eta_1}\otimes Y_{\eta_1}}^2$ such that for $\delta>0$ and 
By choosing $T_*>0$ small enough, we obtain
\begin{equation}
\begin{aligned}
\label{contrEst2}
\Vert (\rho_{12}, \bT_{12})\Vert_{Y_{\eta_1}\otimes Y_{\eta_1}}^2
&\leq
\tfrac{1}{2}
\Vert ( \underline{\rho}_{12},  \underline{\bT}_{12})\Vert_{Y_{\eta_1}\otimes Y_{\eta_1}}^2.
\end{aligned}
\end{equation} 
The existence of the desired fixed point now follows. 

The strong solution being global in time follows from Proposition \ref{prop:strongEst}. 
 
\section{Weak-Strong uniqueness}
\label{sec:weak-strong}
Due to the fixed-point argument shown above, we obtain a unique strong solution in the fixed ball. Nevertheless, one can extend uniqueness to the whole space using a continuity argument. In the following, however, we show a stronger weak-strong uniqueness result where one solution is allowed to be a generalized weak solution. Unlike the $3$-D/$2$-D framework in \cite{BMSS} where a weak-strong uniqueness result is shown for the interaction of a $3$-D fluid with a $2$-D shell under the condition that
\begin{align}
\label{ladyz1}
&\eta\in L^\infty(I; C^1(\omega)), 
\\&\label{ladyz2}
\bu\in L^r(I;L^s(\Oeta)), \quad \tfrac{2}{r}+\tfrac{3}{s}\leq 1, 
\end{align}
we can take advantage of our $2$-D/$1$-D setup to obtain an \textbf{unconditional} weak-strong uniqueness result for our fully coupled solute-solvent-structure system. 

To simplify our work, we first explain why \eqref{ladyz1}-\eqref{ladyz2} was needed in  \cite{BMSS} and then justify why they are not needed in our setting.

Firstly, the requirement \eqref{ladyz1} where needed in \cite{BMSS} for three reasons:

\begin{itemize}
\item The regularity for strong solutions consists of proving a so-called \textit{acceleration estimate} where a key tool is to estimate $\nabx^2\bu$ and  $\nabx p$ by means of $\partial_t\bu +\bu\cdot\nabx\bu$. This can be done by means of a steady Stokes theory for irregular domains \cite{breit2022regularity} that  strongly requires a boundary with a small local Lipschitz constant and thus,  \eqref{ladyz1} is essentially needed. Note that since the Lipschitz constant needs to be small, it is not enough to have spatial regularity $C^{0,1}(\omega)$.
\item To obtain weak-strong uniqueness, the authors in  \cite{BMSS} required the introduction of the so-called \textit{Universal Bogovskij operator} (see Section \ref{sec:prelim} and  \cite[Theorem 2.1]{BMSS}) which further require that the structure or shell is Lipschitz continuous in space. For weak solutions, the highest spatial regularity for $\eta$ is $W^{2,2}(\omega)$ and since the embedding $W^{2,2}(\omega)\hookrightarrow C^{0,1}(\omega)$ fails in $2$-D, one needs \eqref{ladyz1} to obtain weak-strong uniqueness. The aforementioned embedding is however true for our $1$-D shell so \eqref{ladyz1} is not needed. 
\item The final reason for needing \eqref{ladyz1} has to do with Sobolev multipliers where one need Lipschitz continuous shell displacements to get estimates of the form \cite[(2.5)]{BMSS}.
\end{itemize}
 None of these issues arise for our $1$-D shell due to the validity of $W^{2,2}(\omega)\hookrightarrow C^{0,1}(\omega)$  in $1$-D with small Lipschitz constant.

The second requirement \eqref{ladyz2}, however, is crucial in the estimate of the convective term $(\bu\cdot\nabx) \bu$ when one wants to improve weak solutions to strong ones in $3$-D. See the estimate for Roman number I \cite[(4.8)]{BMSS} in the acceleration estimate. The corresponding $2$-D acceleration estimate, as shown in \cite{breit2022regularity}, does not require any additional condition such as \eqref{ladyz2}. Also, even though \eqref{ladyz2} is used in constructing strong solution in \cite{BMSS}, it plays no explicit role in the actual weak-strong uniqueness result. Consequently, we also do not need  \eqref{ladyz2} in our current setting.

From the explanation above, it is clear that any potential conditional weak-strong uniqueness result can only depend on $\rho$ or $\mathbb{T}$. A quick inspection of the contraction argument shown in the preceding section, however, shows that we don't need any extra assumption on $\rho$ nor $\mathbb{T}$ to obtain our desired result. Indeed, if we let $(\eta_w, \bu_w, \rho_w,\bT_w)$   be a  weak solution of   \eqref{divfree}--\eqref{interface} with dataset $(\bff_w, g_w, \rho_{0,w}, \bT_{0,w}, \bu_{0,w}, \eta_{0,w}, \eta_{\star,w})$ and $(\eta_s, \bu_s, \rho_s,\bT_s)$   be a  strong solution of   \eqref{divfree}--\eqref{interface} with dataset $(\bff_s, g_s, \rho_{0,s}, \bT_{0,s}, \bu_{0,s}, \eta_{0,s}, \eta_{\star,s})$ , then in particular
\begin{align*}
&
\rho_w \in   L^{\infty}\big(I;L^{2}(\Oeta)  \big)
\cap 
L^2\big(I;W^{1,2}(\Oeta)  \big),
\\
&
\bT_w \in   L^{\infty}\big(I;L^{2}(\Oeta)  \big)
\cap 
L^2\big(I;W^{1,2}(\Oeta)  \big).
\end{align*}
This regularity (which should be compared with $\rho_1$ and $\mathbb{T}_1$ in the contraction argument earlier) is enough to make sense of all the terms in \eqref{contracUniq} and \eqref{contracUniq1} without change. More importantly, the  right-hand side of \eqref{contracUniq} (and of \eqref{contracUniq1})  does not contain the individual weak solution terms $\rho_w$ and $\mathbb{T}_w$.

With the explanation above in hand, we can now proceed to prove Theorem \ref{thm:weakstrong}. For this, we set
\begin{align*}
&\rho_{ws}:=\rho_w-\overline{\rho}_s,
\quad
\bT_{ws}:=\bT_w-\overline{\bT}_s,
\quad
\bu_{ws}=\bu_w-\overline{\bu}_s, 
\\&
\eta_{ws}=\eta_w-\eta_s,
\quad
\bff_{ws}:=\bff_w-\overline{\bff}_s,
\quad
g_{ws}=g_w-g_s, 
\end{align*} 
and define the following
\begin{align*}
\mathbb{B}_{-\eta_{ws}}&:=
J_{-\eta_{ws}} \nabx \bm{\Psi}_{-\eta_{ws}}^{-1}\circ \bm{\Psi}_{-\eta_{ws}},
\\
\mathbb{A}_{-\eta_{ws}}&:=
(\nabx \bm{\Psi}_{-\eta_{ws}}^{-1}\circ \bm{\Psi}_{-\eta_{ws}})^\top\mathbb{B}_{-\eta_{ws}},
\\
\mathbf{h}_{-\eta_{ws}}(\overline{\bu}_s)
&:=
(1-J_{-\eta_{ws}})\partial_t\overline{\bu}_s
-
J_{-\eta_{ws}} \nabx \overline{\bu}_s\cdot\partial_t\bm{\Psi}_{-\eta_{ws}}^{-1}\circ \bm{\Psi}_{-\eta_{ws}}
\\&\qquad+
\overline{\bu}_s \cdot\nabx \overline{\bu}_s (\mathbb{I}-\mathbb{B}_{-\eta_{ws}}), 
\\
h_{-\eta_{ws}}(\overline{\rho}_s,\overline{\bu}_s)
&:=
(1-J_{-\eta_{ws}})\partial_t\overline{\rho}_s
-
J_{-\eta_{ws}} \nabx \overline{\rho}_s\cdot\partial_t\bm{\Psi}_{-\eta_{ws}}^{-1}\circ \bm{\Psi}_{-\eta_{ws}}
\\&\qquad+
\overline{\bu}_s \cdot\nabx \overline{\rho}_s (\mathbb{I}-\mathbb{B}_{-\eta_{ws}}),
\\
\mathbb{H}_{-\eta_{ws}}(\overline{\rho}_s,\overline{\bu}_s,\overline{\bT}_s)
&:=
(1-J_{-\eta_{ws}})\partial_t\overline{\bT}_s
-
J_{-\eta_{ws}} \nabx \overline{\bT}_s\cdot\partial_t\bm{\Psi}_{-\eta_{ws}}^{-1}\circ \bm{\Psi}_{-\eta_{ws}}
\\&\qquad+
\nabx\overline{\bu}_s(\mathbb{B}_{-\eta_{ws}}-\mathbb{I})\overline{\bT}_s
+
\overline{\bT}_s(\mathbb{B}_{-\eta_{ws}}-\mathbb{I})^\top (\nabx\overline{\bu}_s)^\top
\\&\qquad+
\overline{\bu}_s \cdot\nabx \overline{\bT}_s (\mathbb{I}-\mathbb{B}_{-\eta_{ws}})
+
2
(1-J_{-\eta_{ws}}) (\overline{\bT}_s-\overline{\rho}_s\mathbb{I}).
\end{align*}
As in Section \ref{subsec:fully} (compare with \cite[Section 4]{mensah2024vanishing}), the difference equation  satisfies
\begin{equation}
\begin{aligned}\label{rsss}
\frac{1}{2}&\Big(
\Vert\rho_{ws}(t)\Vert_{L^2(\Omega_{\eta_w(t)})}^2
+
\Vert\bT_{ws}(t)\Vert_{L^2(\Omega_{\eta_w(t)})}^2
+
\Vert \bu_{ws}(t)\Vert_{L^2(\Omega_{\eta_w(t)})}^2 
+
 \Vert\partial_t \eta_{ws}(t)\Vert_{L^2(\omega)}^2 
 \Big)
\\&\qquad+\frac{1}{2} 
\Vert\partial_y^2\eta_{ws}(t)\Vert_{L^2(\omega)}^2
+\int_0^t\Big(
\Vert \nabx \rho_{ws} \Vert_{L^2(\Omega_{\eta_w})}^2
+
\Vert  \nabx\bT_{ws} \Vert_{L^2(\Omega_{\eta_w})}^2\Big)
\dt'
\\
& \qquad+\int_0^t\Big(
\Vert \nabx \bu_{ws}\Vert_{L^2(\Omega_{\eta_w})}^2 
+
\Vert \partial_t\naby  \eta_{ws} \Vert_{L^2(\omega)}^2
\Big)
\dt'
\\
&\leq
\frac{1}{2}
\Big(\Vert\rho_{ws}(0) \Vert_{L^2(\Omega_{\eta_w(0)})}^2
+ 
\Vert\bT_{ws}(0)\Vert_{L^2(\Omega_{\eta_w(0)})}^2
+
\Vert \bu_{ws}(0) \Vert_{L^2(\Omega_{\eta_w(0)})}^2
\Big)
\\
&\qquad +
\frac{1}{2} \Big(\Vert\partial_t \eta_{ws}(0)\Vert_{L^2(\omega)}^2
+
\Vert \naby^2 \eta_{ws}(0) \Vert_{L^2(\omega)}^2\Big)
+
\mathfrak{R}_1
+
\mathfrak{R}_2
+
\mathfrak{R}_3
+
\mathfrak{R}_4
\end{aligned}
\end{equation}
for any $t\in I$
where
\begin{align*}
\mathfrak{R}_1
&:=  
\int_0^t \int_{\Omega_{\eta_w}} (1-J_{ -\eta_{ws}})\divx\overline{\mathbb T}_s\cdot ( \bu_{ws}+\mathrm{Bog}_{\eta_w}(\Div \overline \bu_s))\dx\dt'
\\
 &\quad+\int_0^t\int_{\Omega_{\eta_w}}\divx\mathbb{T}_{ws}\cdot( \bu_{ws}+\mathrm{Bog}_{\eta_w}(\Div \overline \bu_s))\dx\dt',
\\
\mathfrak{R}_2&
:= 
\int_0^t
\int_{\partial\Omega_{\eta_w}}
(\bn_{\eta_w}\cdot\nabx)\rho_{ws} \rho_{ws}
\dd\mathcal{H}^1 \dt'
-
\int_0^t
\int_{\Omega_{\eta_w}} 
(\mathbf{u}_{ws}\cdot \nabx) \overline{\rho}_s\,
\rho_{ws} \dx\dt'
\\&
\quad+\int_0^t 
\int_{\Omega_{\eta_w}}
\big[
\divx((\mathbb{I}-\mathbb{A}_{-\eta_{ws}})\nabx \overline{\rho}_s)
-
h_{-\eta_{ws}}(\overline{\rho}_s,\overline{\bu}_s)\big]\rho_{ws} \dx\dt', 
\\
\mathfrak{R}_3&
:=
\int_0^t\big(\Vert \bT_{ws}\Vert_{L^2(\Omega_{\eta_1})}^2
+
\Vert \rho_{ws}\Vert_{L^2(\Omega_{\eta_1})}^2\big)\dt' 
\\&\quad+
\int_0^t\int_{\partial\Omega_{\eta_w}}
(\bn_{\eta_w}\cdot\nabx)\bT_{ws}:\bT_{ws}
\dd\mathcal{H}^1\dt'
+
2\int_0^t\int_{\Omega_{\eta_w}}\vert\nabx\bu_w\vert
\vert \bT_{ws}\vert^2\dx\dt'
\\&
\quad+
\int_0^t
\int_{\Omega_{\eta_w}}
\big[
\nabx \bu_{ws}\overline{\bT}_s + \overline{\bT}_s(\nabx \bu_{ws})^\top 
-
\mathbf{u}_{ws}\cdot \nabx \overline{\bT}_s
\big]:\bT_{ws} \dx\dt'
\\&
\quad+
\int_0^t
\int_{\Omega_{\eta_w}}
\big[
\divx((\mathbb{I}-\mathbb{A}_{-\eta_{ws}})\nabx \overline{\bT}_s)
-
\mathbb{H}_{-\eta_{ws}}(\overline{\rho}_s,\overline{\bu}_s,\overline{\bT}_s)\big]:\bT_{ws} \dx\dt'
\end{align*} 
and
\begin{align*}
\mathfrak{R}_4
&=   \int_0^t\int_{\Omega_{\eta_w}}
\big(\overline{\bu}_s\cdot\nabx \overline{\bu}_s  -\mathbf{h}_{-\eta_{ws}}(\overline{\bu}_s)\big)
\cdot \big(\bu_{ws} + \mathrm{Bog}_{\eta_w}(\Div\overline\bu_s)\big)\dx\dt' 
\\
&\quad+\frac{1}{2} \int_0^t\int_{\partial\Omega_{\eta_w}}\bn\circ\bm{\varphi}^{-1}_ {\eta_w}\cdot(\partial_t\eta_w \bn_{\eta_w })\circ\bm{\varphi}^{-1}_ {\eta_w}|\overline\bu_s |^2 \dd \mathcal H^1\dt'
\\
&\quad- \int_0^t\int_{\partial\Omega_{\eta_w}}\bn\circ\bm{\varphi}^{-1}_ {\eta_w}\cdot(\partial_t\eta_s \bn_{\eta_w })\circ\bm{\varphi}^{-1}_ {\eta_w}| \bu_w |^2 \dd \mathcal H^1
\dt'
\\ 
&\quad+
\int_0^t\int_{\Omega_{\eta_w}} \bu_{ws}  \cdot \partial_t\mathrm{Bog}_{\eta_w}(\Div\overline\bu_s)\dx\dt'
-
\int_{\Omega_{\eta_w}}\bu_{ws} \cdot \mathrm{Bog}_{\eta_w}(\Div\overline\bu_s)\dx\nonumber
\\
&\quad+
\int_{\Omega_{\eta_w(0)}}\bu_{ws}(0) \cdot \mathrm{Bog}_{\eta_w(0)}(\Div\overline\bu_s(0))\dx
\\
&\quad- \int_0^t\int_{\Omega_{\eta_w}}\nabx \bu_{ws}:\nabx\mathrm{Bog}_{\eta_w}(\Div\overline\bu_s)\dx\dt'
\\
&\quad+\int_0^t\int_{\Omega_{\eta_w}}\big(\mathbf A_{-\eta_{ws}}-\mathbb I \big)\nabx\overline\bu_s:\nabx(\bu_{ws}+\mathrm{Bog}_{\eta_w}(\Div\overline\bu_s))\dx\dt'
\\
&\quad+\int_0^t\int_{\Omega_{\eta_w}}\big(\mathbb I -\mathbf B_{-\eta_{ws}}\big)\overline p_s:\nabx(\bu_{ws}+\mathrm{Bog}_{\eta_w}(\Div\overline\bu_s))\dx\dt'
\\
&\quad+\int_0^t \int_\omega  g_{ws}\,\partial_t \eta_{ws}\dy\dt'
 +
 \int_0^t \int_{\Omega_{\eta_w}}   \mathbf{f}_{ws}\cdot( \bu_{ws}+\mathrm{Bog}_{\eta_w}(\Div \overline \bu_s))\dx\dt'
 \\
 &\quad+\int_0^t \int_{\Omega_{\eta_w}} (1-J_{ -\eta_{ws}})\overline{\mathbf f}_s\cdot (\bu_{ws}+\mathrm{Bog}_{\eta_w}(\Div \overline \bu_s))\dx\dt'
 \\
 &\quad+\int_0^t\int_{\Omega_{\eta_w}}\bu_w\otimes \bu_w:\nabx\mathrm{Bog}_{\eta_w}(\Div\overline \bu_s)\dx\dt'
\\
&\quad+
 \int_0^t\int_{\Omega_{\eta_w}}\bu_w\cdot\nabx\bu_w\cdot\overline \bu_s\dx\dt'
 \\
 &\quad+\int_0^t\int_{\Omega_{\eta_w}} \mathbf f_{ws}\cdot\mathrm{Bog}_{\eta_w}(\Div\overline \bu_s)\dx\dt'. 
\end{align*}
To estimate $\mathfrak{R}_1$, we use the estimate
\begin{align*}
\Vert \mathrm{Bog}_{\eta_w}(\Div \overline \bu_s)
\Vert_{W^{k,2}(\Omega_{\eta_w})}^2
=
\Vert \mathrm{Bog}_{\eta_w}(\Div \overline \bu_{ws})
\Vert_{W^{k,2}(\Omega_{\eta_w})}^2
\lesssim
\Vert \overline \bu_{ws}
\Vert_{W^{k,2}(\Omega_{\eta_w})}^2,  
\end{align*}
for $k\geq0$
to obtain
\begin{align*}
\mathfrak{R}_1
\leq&
\delta 
\int_0^t 
\Vert\nabx \bu_{ws}\Vert_{L^2(\Omega_{\eta_w})}^2
\dt'
+
c(\delta)
\int_0^t \Vert\nabx \overline{\mathbb T}_s\Vert_{L^2(\Omega_{\eta_w})}^2 
\Vert\naby^2\eta_{ws}\Vert_{L^2(\omega)}^2
\dt'
\\&
+
\delta 
\int_0^t 
\Vert\nabx \mathbb T_{ws}\Vert_{L^2(\Omega_{\eta_w})}^2
\dt'
+
c(\delta)  
\int_0^t 
\Vert  \bu_{ws}\Vert_{L^2(\Omega_{\eta_w})}^2
\dt' 
\end{align*}
for any $\delta>0$. Also, just as in \eqref{contracUniq} and   \eqref{contracUniq1},
\begin{align*}
\mathfrak{R}_2+\mathfrak{R}_3
\leq&
\delta 
\int_0^t \big(
\Vert\nabx \bu_{ws}\Vert_{L^2(\Omega_{\eta_w})}^2
+
\Vert\nabx \rho_{ws}\Vert_{L^2(\Omega_{\eta_w})}^2
+
\Vert\nabx \mathbb T_{ws}\Vert_{L^2(\Omega_{\eta_w})}^2
\big)
\dt' 
\\&
+ 
c(\delta)  
\int_0^t 
\big(1+\Vert  \nabx\bu_w\Vert_{L^2(\Omega_{\eta_w})}^2
+
\Vert  \overline{\bu}_s\Vert_{W^{2,2}(\Omega_{\eta_w})}^2
\\&\qquad\qquad+
\Vert  \overline{\rho}_s\Vert_{W^{2,2}(\Omega_{\eta_w})}^2
+
\Vert  \overline{\bT}_s\Vert_{W^{2,2}(\Omega_{\eta_w})}^2
\big)
\\&\qquad\qquad\qquad
\times
\big(
\Vert  \rho_{ws}\Vert_{L^2(\Omega_{\eta_w})}^2
+
 \Vert  \mathbb T_{ws}\Vert_{L^2(\Omega_{\eta_w})}^2
 \big)
\dt'
\\& 
+ 
c(\delta)  
\int_0^t 
\big( 
\Vert  \overline{\rho}_s\Vert_{W^{2,2}(\Omega_{\eta_w})}^2
+
\Vert  \overline{\bT}_s\Vert_{W^{2,2}(\Omega_{\eta_w})}^2
\\&\qquad\qquad+
\Vert  \partial_t\overline{\rho}_s\Vert_{L^{2}(\Omega_{\eta_w})}^2
+
\Vert  \partial_t\overline{\bT}_s\Vert_{L^{2}(\Omega_{\eta_w})}^2
\big) 
\\&\qquad\qquad\qquad
\times  
\big(
\Vert
\naby^2 \eta_{ws}\Vert_{L^2(\omega)}^2 
+
\Vert
\partial_t  \eta_{ws}\Vert_{L^2(\omega)}^2 
\big) 
\dt'
\end{align*} 
Finally, as shown in \cite[(5.6)]{BMSS} (see also \cite[Remark 5.2.]{BMSS}),
\begin{align*}
\mathfrak{R}_4
\leq&
\delta
\big(\Vert  \bu_{ws}\Vert_{L^2(\Omega_{\eta_w})}^2
+
\Vert \naby^2\eta_{ws}\Vert_{L^2(\omega)}^2
\big)
 \\&+
\delta
\int_0^t 
\big( 
\Vert\nabx \bu_{ws}\Vert_{L^2(\Omega_{\eta_w})}^2
+
\Vert\partial_t\naby\eta_{ws}\Vert_{L^2(\omega)}^2
\big)
\dt'
\\&+
c(\delta)
\int_0^t \big(1+\Vert  \overline{\bu}_s\Vert_{W^{2,2}(\Omega_{\eta_w})}^2
+\Vert  \overline{p}_s\Vert_{W^{1,2}(\Omega_{\eta_w})}^2 \big) 
\Vert\naby^2\eta_{ws}\Vert_{L^2(\omega)}^2
\dt'
\\&+
c(\delta)
\int_0^t \big(
\Vert  \partial_t\overline{\bu}_s\Vert_{L^2(\Omega_{\eta_w})}^2
+\Vert  \overline{\bff}_s\Vert_{L^2(\Omega_{\eta_w})}^2
\big) 
\Vert\naby^2\eta_{ws}\Vert_{L^2(\omega)}^2
\dt'
\\&+
c(\delta)
\int_0^t  \big(1+\Vert  \overline{\bu}_s\Vert_{W^{2,2}(\Omega_{\eta_w})}^2
+
\Vert\partial_t \eta_s\Vert_{W^{2,2}(\omega)}^2
\big) 
\Vert   \bu_{ws}\Vert_{L^2(\Omega_{\eta_w})}^2 
\dt'
\\&+
c(\delta)
\int_0^t  \big(1+\Vert  \overline{\bu}_s\Vert_{W^{2,2}(\Omega_{\eta_w})}^2
+
\Vert\partial_t \eta_s\Vert_{W^{2,2}(\omega)}^2
\big) 
\Vert\partial_t \eta_{ws}\Vert_{L^2(\omega)}^2 
\dt'
\\&
+c(\delta)
\Vert  \bu_{ws}(0)\Vert_{L^2(\Omega_{\eta_w(0)})}^2
+
c(\delta)
\int_0^t\Big(\Vert \mathbf f_{ws}\Vert_{L^2(\Omega_{\eta_w})}^2+\Vert g_{ws}\Vert_{L^2(\omega)}^2 \Big)\dt'.
\end{align*} 
Substituting the estimates for the $\mathfrak{R}_i$s back into \eqref{rsss}  and taking the supremum with respect to time, we obtain Theorem \ref{thm:weakstrong} by applying Gr\"onwall's lemma. The proof is done!

\section{Conclusion}
In conclusion, we have presented in this work, the Oldroyd-B dumbbell model describing the evolution of a two-dimensional dilute polymer fluid interacting with a one-dimensional viscoelastic shell. We have shown that if no degeneracies occur while the structure deforms, a weak and strong solution exist and the strong solution is unique. This result now opens the door to study further properties for this system including the qualitative and quantitative properties of their solutions.

%------
% Insert acknowledgments and information
% regarding funding at the end of the last
% section, i.e., right before the bibliography.
%------

%\begin{ack}
%We thank X.
%\end{ack}
%
%\begin{funding}
%This work was partially supported by~\ldots
%\end{funding}

%------
% Insert the bibliography.
%------

\end{document}